\renewcommand{\geq}{\geqslant}
\renewcommand{\leq}{\leqslant}
\newtheorem{theo}{Theorem}
\newtheorem*{theo*}{Theorem}
\newtheorem{prop}{Proposition}[section]
\newtheorem{coro}[prop]{Corollary}
\newtheorem{lemma}[prop]{Lemma}
\newtheorem*{main-theorem}{Main Theorem}
\newtheorem*{theorem*}{Theorem}
\theoremstyle{definition}
\newtheorem{rema}[prop]{Remark}
\newtheorem{definition}[prop]{Definition}
\newtheorem*{remark*}{Remark}
\numberwithin{equation}{section}
\def\phi{\varphi}
\def\Re{\,\mathrm{Re}\,}
\def\Im{\,\mathrm{Im}\,}
\def\div{\mathrm{div}\,}
\def\Op{\mathrm{Op}\,}
\def\phi{\varphi}
\def\be{\begin{eqnarray*}}
\def\ee{\end{eqnarray*}}
\def\ben{\begin{eqnarray}}
\def\een{\end{eqnarray}}
\def\L2R{L_{\text{Rest}}^2}
\def\11{\mathds{1}}
\def\L2c{L^2_{\text{comp}}}
\begin{document}

\title[Kelvin-Voigt damping]{Decay rates for Kelvin-Voigt damped wave equations II: the geometric control condition}

\author[N. Burq]{Nicolas Burq}
\address{Universit\'e Paris-Saclay, Laboratoire de mathématiques d'Orsay, UMR 8628 du CNRS, B\^atiment 307, 91405 Orsay Cedex, France and Institut Universitaire de France}
\email{nicolas.burq@math.u-psud.fr}
\author[C-M. Sun]{Chenmin Sun}
\address{Universit\'e de Cergy-Pontoise, Laboratoire de Mathématiques AGM, UMR  8088 du CNRS, 2 av. Adolphe Chauvin
95302 Cergy-Pontoise Cedex, France }
\email{chenmin.sun@u-cergy.fr}


\begin{abstract}
 We study in this article decay rates for  Kelvin-Voigt damped wave equations under a geometric control condition. When the damping coefficient is sufficiently smooth ($C^1$ vanishing nicely, see~\eqref{Hypo1}) we show that exponential decay follows from geometric control conditions (see~\cite{BuCh, Te12} for similar results under stronger assumptions on the damping function).
\end{abstract}

\maketitle

\section{Introduction}
\label{S:intro}
In  this paper we investigate decay rates for Kelvin-Voigt damped wave equations under  geometric control conditions. 
  We work in a smooth bounded domain $\Omega \subset \mathbb{R}^d$ and consider the following equation 
\begin{equation}\label{damped}
\left\{ \begin{aligned} &(\partial_t ^2 - \Delta) u - \text{ div}( a(x) \nabla_x  \partial_t u) =0 \\
& u \mid_{t=0} = u_0 \in H_0^1( \Omega),  \quad \partial_t u \mid_{t=0} = u_1 \in L^2 ( \Omega)\\
&u \mid_{\partial \Omega}  =0 \end{aligned}
\right. 
\end{equation}
with a non negative damping term $a(x)$. The solution can be written as 
\begin{equation}
 U(t) = \begin{pmatrix} u \\ \partial_t u \end{pmatrix} = e^{\mathcal{A} t } \begin{pmatrix} u_0 \\ u_1 \end{pmatrix},
 \end{equation}
 where the generator $\mathcal{A}$ of the semi-group is given by 
 $$ \mathcal{A} = \begin{pmatrix} 0 & 1 \\ \Delta &  \text{ div} a \nabla \end{pmatrix}  \begin{pmatrix} u_0 \\ u_1 \end{pmatrix} ,$$
 with domain 
 $$ {D}( \mathcal{A}) = \{  (u_0, u_1) \in H^1_0\times L^2;  \Delta u_0 +\text{ div} a \nabla u_1 \in L^2; u_1 \in H^1_0\}.$$ 
The energy of solutions 
$$E(u) (t) = \int_{\Omega} (|\nabla_x u | ^2 + |\partial_t u| ^2) dx $$
satisfies 
$$ E((u_0, u_1)) (t) - E((u_0, u_1)) (0) = - \int_{0}^t\int_{\Omega} a(x) |\nabla_x \partial_t u |^2 (s,x) ds.$$
 Our purpose here is to show that if the damping $a$ is sufficiently smooth, the exponential decay rate holds, dropping some unnecessary assumptions on the behaviour of the damping term where it becomes positive in  previous works~\cite{BuCh}.  Namely we shall assume
 $a(x)\geq 0$ is  $C^1(\overline{\Omega})$ and satisfy the regularity hypothesis
\begin{align}\label{Hypo1}
|\nabla a|\leq Ca^{\frac{1}{2}}.
\end{align}

Our main result is 

\begin{theo}\label{thm1}
Assume that $\Omega$ is a compact Riemannian manifold with smooth boundary. Let $a\in C^1(\overline{\Omega})$ be a nonnegative function satisfying \eqref{Hypo1}, such that the following geometric control condition is satisfied:
\begin{itemize}
	\item There exists  $\delta>0$ such that all rays of geometric optics (straight lines) reflecting on the boundary according to the
laws of geometric optics eventually reach the set $\omega_{\delta}=\{x\in\Omega:a(x)>\delta \}$ in finite time.
\end{itemize} 
 Then there exists $\alpha>0$, such that for all $t\geq 0$ and every $(u_0,u_1)\in H_0^1(\Omega)\times L^2(\Omega)$, the energy of solution $u(t)$ of \eqref{damped} with initial data $(u_0,u_1)$ satisfies
$$ E[u](t)\leq e^{-\alpha t}E[u](0).
$$ 
\end{theo}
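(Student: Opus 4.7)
The plan is to use the Gearhart--Huang--Pr\"uss theorem to reduce exponential decay to a uniform bound on the imaginary resolvent of $\mathcal{A}$, and then to establish that bound by a semiclassical defect measure and propagation argument, handling the Kelvin--Voigt damping term carefully. Set $\mathcal{H}=H^1_0(\Omega)\times L^2(\Omega)$. Since the semigroup $e^{t\mathcal{A}}$ is dissipative and $\mathcal{A}$ has no spectrum on $i\mathbb{R}$ (unique continuation), the theorem makes the conclusion equivalent to $\sup_{\tau\in\mathbb{R}}\|(\mathcal{A}-i\tau)^{-1}\|_{\mathcal{H}\to\mathcal{H}}<\infty$. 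I argue by contradiction: suppose $\tau_n\to+\infty$ and $U_n=(u_n,v_n)$ with $\|U_n\|_{\mathcal{H}}=1$ and $(\mathcal{A}-i\tau_n)U_n=(f_n,g_n)\to 0$ in $\mathcal{H}$. The first coordinate yields $v_n=i\tau_n u_n+f_n$ and the second becomes
\[
(\tau_n^2+\Delta+i\tau_n\,\mathrm{div}(a\nabla))u_n=g_n+i\tau_n f_n-\mathrm{div}(a\nabla f_n).
\]
Taking real parts of $\langle(\mathcal{A}-i\tau_n)U_n,U_n\rangle_{\mathcal{H}}$ gives $\int_\Omega a|\nabla v_n|^2\,dx\to 0$, and combining with the first coordinate produces the sharp quantitative dissipation $\tau_n\sqrt{a}\nabla u_n\to 0$ in $L^2$.

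Set $h_n=1/\tau_n$ and $w_n=u_n/h_n$; then $w_n$ is bounded in the semiclassical Sobolev space $H^1_{h_n}$, so along a subsequence one may attach a semiclassical defect measure $\mu$ on $\overline{T^*\Omega}$. The principal part $-h^2\Delta-1$ of the rescaled equation forces $\mathrm{supp}\,\mu\subset\{|\xi|^2=1\}$, while the dissipation, which reads $\|\sqrt{a}(h_n\nabla w_n)\|_{L^2}=o(h_n)$, yields $\int a|\xi|^2\,d\mu=0$, so that $\mu$ is supported on $\{a=0\}\cap\{|\xi|^2=1\}$. One then tests the equation against $\mathrm{Op}_{h_n}(b)w_n$ for real symbols $b\in C_c^\infty(T^*\Omega)$ and takes imaginary parts. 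The wave operator contributes the standard $\tfrac{h_n}{2}\int\{|\xi|^2,b\}\,d\mu+o(h_n)$, while the Kelvin--Voigt term $Q_h=-ih\,\mathrm{div}(a\nabla)$, after integration by parts and Cauchy--Schwarz, contributes at most
\[
C h_n\,\|\sqrt{a}\nabla w_n\|_{L^2}\cdot\|\sqrt{a}\nabla\mathrm{Op}_{h_n}(b)w_n\|_{L^2}=o(h_n),
\]
provided $|\nabla a|\leq C\sqrt{a}$ is used to commute $\sqrt{a}$ through $\mathrm{Op}_{h_n}(b)$ with controlled errors. Hence $\mu$ is invariant under the Hamilton flow of $|\xi|^2$, and Melrose--Sj\"ostrand theory at the Dirichlet boundary extends the invariance to the generalized bicharacteristic flow (again the damping is negligible in the same sense). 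The geometric control condition then forces $\mu\equiv 0$, contradicting the positive total mass of $\mu$ inherited from the normalization $\|U_n\|_{\mathcal{H}}=1$ (after checking, via the equation, that $U_n$ cannot concentrate at frequencies other than those of size $\tau_n$).

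The main obstacle is the propagation step. The Kelvin--Voigt term $Q_h$ is a second-order differential operator multiplied only by $h$, hence \emph{a priori} of the same principal order as the wave operator; it cannot be treated as a harmless lower-order perturbation. Only the sharp dissipation $\tau_n\sqrt{a}\nabla u_n\to 0$ together with the hypothesis $|\nabla a|\leq C\sqrt{a}$---which forces $\sqrt{a}$ to be Lipschitz and thereby permits a pseudodifferential commutator calculus treating $\sqrt{a}$ as a multiplier with errors $o(h_n)$---renders the damping negligible in the propagation identity. Earlier results (such as \cite{BuCh}) required stronger regularity on $a$ (e.g.\ $\nabla a$ supported in $\{a>0\}$, or $a\in C^{1,1}$) precisely to bypass this delicate commutator calculus, whereas \eqref{Hypo1} is the natural weak regularity threshold at which the argument still goes through.
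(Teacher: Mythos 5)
Your high-level strategy --- reduce to a uniform resolvent bound, argue by contradiction, attach a semiclassical defect measure, localize it to $\{a=0\}\cap\{|\xi|^2=1\}$ via the sharp dissipation estimate, and prove invariance under the generalized flow to contradict geometric control --- coincides with the paper's. Most importantly, you correctly identify the central mechanism: the Kelvin--Voigt term enters at principal order and cannot be treated as a source; one must integrate by parts, balance a factor of $\sqrt{a}$ against the dissipation $\|\sqrt{a}\,h\nabla u_h\|=o(h)$, and use the Lipschitz bound $|\nabla a|\leq C\sqrt{a}$ to commute $\sqrt{a}$ through semiclassical pseudodifferential cutoffs with $O(h)$ error. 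This is exactly what the paper's commutator lemma (Corollary~\ref{Commutator:Lip}), Lemma~\ref{apriori2}, and the interior propagation estimate (Proposition~\ref{propagation:interior}) do. Your interior argument is sound.

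The genuine gap is at the boundary. You assert that ``Melrose--Sj\"ostrand theory at the Dirichlet boundary extends the invariance to the generalized bicharacteristic flow (again the damping is negligible in the same sense),'' but this is precisely what cannot be invoked off the shelf, for the very reason you point out in the interior: the damping is at the same principal order as $-h^2\Delta-1$, so the boundary propagation theorem does not apply as a black box to the damped equation. The paper must rebuild the boundary analysis with the damping kept on the left-hand side, and every step requires the $\sqrt{a}$-balancing trick anew: a hidden-regularity estimate on the Neumann trace $h\partial_\nu u$ (Lemma~\ref{cachee}); the key facts $\mu(\mathcal{H})=0$ and the quantitative bound near the glancing set (Proposition~\ref{prop3.2}), obtained from a first-order factorization whose source contains the damping; the extension of the quadratic form to symbols affine in $\xi_d$ (Proposition~\ref{extension}); the boundary propagation formula with a boundary measure $\nu_\partial$ on the Neumann traces (Proposition~\ref{propagation:boundary}, Corollary~\ref{propagationformula}); and $\mu(\mathcal{G}^{2,+})=0$ (Corollary~\ref{nochargingG2+}). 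Only after all of these can the invariance theorem of Burq--Lebeau be applied. None of this follows ``in the same sense'' as the interior case --- it is the bulk of the paper (Sections~\ref{sec.4} and~\ref{sec.7}). You also omit the localization $v_h=\chi(a/h)u_h$ to the set $\{a\lesssim h\}$, which the paper uses to upgrade the damping source in $H^{-1}$ from $o(h)$ to $o(h^{3/2})$ and keep the boundary elliptic estimate of~\cite{BuLe01} applicable; if you work with $u_h$ directly you should verify the boundary elliptic regularity still holds or supply an equivalent device.
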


To prove this result, we first reduce it very classicaly in Section~\ref{sec.2} to resolvent estimates. Since the low frequency estimates are true, we are reduced to the high frequency regime. The proof relies on resolvent estimates which are proved through a contradiction argument that we establish in Section~\ref{sec.2}. In Section~\ref{sec.3} we prove  {\em a priori} estimates for our sequences. The main task then is to prove a propagation invariance for these measures. A main difficulty to overcome is that it is not possible to put the damping term in the r.h.s. of the equation~\eqref{damped} and treat it as a {\em perturbation}. Instead we have to keep it on the left hand side and revisit the proof of the propagation property from~\cite{BuLe01}.  In Section~\ref{sec.4}, we  introduce the geometric tools necessary to tackle the boundary value problem and define semi-classical measures associated to our sequences.  In Section~\ref{sec.5} we prove the interior propagation result for our measures. Finally, in Section~\ref{sec.7}, we finish the proof of the contradiction argument by establishing the invariance of the semi-classical measures we defined up to the boundary. Here the proof uses crucially the main result in~\cite[Théorème 1]{BuLe01}.
\begin{rema}\label{rem.fredholm}
Throughout this note, we shall prove that some operators of the type $P- \lambda \text{Id}$, $\lambda \in \mathbb{R}$ (resp.  $\lambda \in i \mathbb{R}$) are invertible with estimates on the inverse. All these operators share the feature that they have compact resolvent, i.e. $\exists z_0 \in \mathbb{C}; (P- z_0)^{-1}$  exists and is compact (or it will be possible to reduce the question to this situation). 
As a consequence, since 
$$ (P- \lambda) = (P-z_0)^{-1} ( \text{Id} + (z_0- \lambda) )^{-1}), $$
and  $( \text{Id} + (z_0- \lambda)^{-1})$ is Fredholm with index $0$,  to show that $(P- \lambda) $ is invertible with inverse bounded in norm by $A$ , it is enough to bound the solutions of $(P- \lambda) u =f$ and prove 
$$ (P- \lambda) u =f\Rightarrow \| u \|_{L^2} \leq A \| f\|_{L^2} .$$
\end{rema}
\begin{rema}\label{C2case}
Assume that $a$ is the restriction to $\Omega$ of a nonnegative $C^2(\mathbb{R}^d)$ function. Then  the hypothesis~\eqref{Hypo1} is satisfied.
\end{rema}
\begin{proof}	
It is enough to prove~\eqref{Hypo1} for $\Omega = \mathbb{R}^d$,  $a\in  C^2( \Omega)$. 
Let $x_0 \in \mathbb{R}^d$ and denote by $z_0=\nabla a(x_0)$ From Taylor's formula, we have for any $s\in \mathbb{R}$, there exists $\theta \in(0,1)$, such that
$$ a(x_0+sz_0)=a(x_0)+s|z_0|^2+\frac{s^2}{2}a''(x_0+\theta sz_0) ( z_0, z_0)\geq 0
$$
Since this polynomial in $s$ is non negative, we deduce tat its discriminant is non positive
$$ |z_0|^4 - 2 \| a''\|_{\infty} |z_0 |^2 a(z_0)\leq 0 \Rightarrow |\nabla_x a (x_0)||^2 \leq 2 \| a''\|_{\infty} a(z_0).
$$
Notice that in the above lemma, the condition {\em cannot} be relaxed to $a \in C^2( \overline {\Omega}), a\geq 0$.  Indeed, consider the following example: $\Omega=B(0,1)$ and $a(x)=1-|x|^2$ for $|x|\leq 1$. Then obviously $a\in C^2(\overline{\Omega})$, $a\geq 0$ , but on the boundary, $\nabla_x a \neq 0$, while $a=0$.\end{proof}

\subsection*{Acknowledgment}
The first author is supported by Institut Universitaire de France and
ANR grant ISDEEC, ANR-16-CE40-0013. The second author is supported by the postdoc programe: ``Initiative d'Excellence Paris Seine" of CY Cergy-Paris Universit\'e and ANR grant
ODA (ANR-18-CE40- 0020-01).

\section{Contradiction argument}\label{sec.2}

It is well known that decay estimates for the evolution semi-group follow from resolvent estimates~\cite{Bu98, BoTo10, BaDu08}. Here we shall need 
the classical (see e.g. \cite[Proposition A.1]{BuGe18})
\begin{theo}
The exponential decay of the Kelvin Voigt semi-group is equivalent to the following resolvent estimate: There exists $C$ such that for all $\lambda \in \mathbb{R}$, the operator $(\mathcal{A} - i \lambda) $ is invertible from $D(\mathcal{A})$ to $\mathcal{H}$ and its inverse satisfies
\begin{equation}\label{Resolv.1}
\| ( \mathcal{A}- i \lambda) ^{-1}\| _{\mathcal{L} ( \mathcal{H})} \leq C 
\end{equation}\end{theo}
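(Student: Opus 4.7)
The statement is the standard Gearhart--Pr\"uss--Huang characterization of exponential stability on Hilbert space, applied to our concrete operator $\mathcal{A}$, so the plan is to reduce to that abstract result. The preparatory step is to check that $\mathcal{A}$ generates a $C_0$-semigroup of contractions on $\mathcal{H} = H_0^1(\Omega) \times L^2(\Omega)$ equipped with the energy inner product. Dissipativity is built in,
$$\Re \langle \mathcal{A} U, U \rangle_{\mathcal{H}} = -\int_\Omega a(x) |\nabla v|^2 \, dx \leq 0, \qquad U = (u,v) \in D(\mathcal{A}),$$
and maximality reduces to solving $(I - \mathcal{A}) U = F$, a coercive variational problem of Lax--Milgram type since $a \geq 0$. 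Lumer--Phillips then yields a contraction semigroup, and in particular $\{\Re z > 0\} \subset \rho(\mathcal{A})$ with $\|(\mathcal{A} - z)^{-1}\|_{\mathcal{L}(\mathcal{H})} \leq (\Re z)^{-1}$.

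The direct implication follows at once from the Laplace representation
$$(\mathcal{A} - z)^{-1} = - \int_0^{+\infty} e^{-zt}\, e^{t\mathcal{A}}\, dt,$$
which converges uniformly on $\{\Re z \geq -\omega/2\}$ as soon as $\|e^{t\mathcal{A}}\|_{\mathcal{L}(\mathcal{H})} \leq M e^{-\omega t}$, and this yields \eqref{Resolv.1}.

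For the substantive converse I would assume \eqref{Resolv.1} and first use a Neumann series expansion around each point of $i\mathbb{R}$ to extend $(\mathcal{A} - z)^{-1}$ analytically, with a uniform bound, to a strip $\{\Re z > -\varepsilon\}$ for some $\varepsilon > 0$. Given $U_0 \in D(\mathcal{A})$ and $\alpha \in (0,\varepsilon)$, I would then consider $w(t) = \mathbf{1}_{t \geq 0}\, e^{-\alpha t} e^{t\mathcal{A}} U_0$, which sits in $L^2(\mathbb{R};\mathcal{H})$ by contractivity and whose Fourier transform is $\widehat{w}(\lambda) = (\alpha + i\lambda - \mathcal{A})^{-1} U_0$. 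The elementary identity
$$(\alpha + i\lambda - \mathcal{A})^{-1} U_0 = (\alpha + i\lambda)^{-1}\bigl(U_0 + (\alpha + i\lambda - \mathcal{A})^{-1} \mathcal{A} U_0\bigr)$$
gives $O(|\lambda|^{-1})$ decay at infinity, so Plancherel produces
$$\int_0^{+\infty} e^{-2\alpha t} \|e^{t\mathcal{A}} U_0\|_{\mathcal{H}}^2 \, dt \leq C(\|U_0\|^2_{\mathcal{H}} + \|\mathcal{A} U_0\|_{\mathcal{H}}^2).$$
A standard Datko--Pazy argument, exploiting the monotonicity of $t \mapsto \|e^{t\mathcal{A}} U_0\|$ that comes from contractivity and the density of $D(\mathcal{A})$ in $\mathcal{H}$, then upgrades this weighted $L^2$ integrability to uniform exponential decay.

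The only non-cosmetic technical points are the passage from the bound on $i\mathbb{R}$ to a bound on a strip (a routine Neumann shift) and the Plancherel bookkeeping at $|\lambda| \to \infty$, which really does require working on the dense subspace $D(\mathcal{A})$. Note also that the compactness of $(\mathcal{A} - z_0)^{-1}$ recorded in Remark~\ref{rem.fredholm} ensures that $\sigma(\mathcal{A}) \cap i\mathbb{R}$ is at worst a discrete set of eigenvalues, so the a priori estimate \eqref{Resolv.1} automatically forces $i\mathbb{R} \subset \rho(\mathcal{A})$ and hence the invertibility asserted in the theorem.
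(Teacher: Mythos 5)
The paper does not prove this statement; it records it as the classical Gearhart--Pr\"uss--Huang theorem and cites \cite[Proposition A.1]{BuGe18}, so your proposal is supplying a proof rather than being compared against one. Your overall strategy (Lumer--Phillips to get a contraction semigroup, Laplace representation for the easy implication, Fourier--Plancherel and Datko for the converse) is indeed the standard route, and the preparatory step and the direct implication are fine.

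However, the converse as you have written it contains a genuine gap. Starting from $\widehat{w}(\lambda) = (\alpha + i\lambda - \mathcal{A})^{-1} U_0$ and the splitting $(\alpha+i\lambda-\mathcal{A})^{-1}U_0 = (\alpha+i\lambda)^{-1}\bigl(U_0 + (\alpha+i\lambda-\mathcal{A})^{-1}\mathcal{A}U_0\bigr)$, the best Plancherel yields is exactly the bound you state,
$$\int_0^{+\infty} e^{-2\alpha t}\|e^{t\mathcal{A}}U_0\|^2\,dt \le C\bigl(\|U_0\|^2 + \|\mathcal{A}U_0\|^2\bigr), \qquad U_0 \in D(\mathcal{A}),$$
and this is \emph{not} an input that the Datko--Pazy theorem can digest: the constant on the right is controlled by the graph norm, not the $\mathcal{H}$-norm, and neither the monotonicity of $t\mapsto\|e^{t\mathcal{A}}U_0\|$ nor the density of $D(\mathcal{A})$ lets you discard $\|\mathcal{A}U_0\|$. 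Monotonicity only gives $\|e^{T\mathcal{A}}U_0\|\lesssim T^{-1/2}(\|U_0\|+\|\mathcal{A}U_0\|)$, and density cannot upgrade this to a bound on $\mathcal{H}$ because the constant blows up along any approximating sequence; iterating on $D(\mathcal{A}^n)$ only produces polynomial decay with regularity loss. The source of the spurious term is the jump of $w$ at $t=0$, which contributes the non-decaying $U_0$ in the Fourier transform. The standard repair is to multiply $e^{t\mathcal{A}}U_0$ by a smooth cutoff $\phi$ with $\phi(0)=0$ before taking Fourier transforms: then $\widehat{\phi\, e^{\cdot\mathcal{A}}U_0}(\lambda) = (i\lambda - \mathcal{A})^{-1}\,\widehat{\phi'\,e^{\cdot\mathcal{A}}U_0}(\lambda)$, Plancherel together with \eqref{Resolv.1} gives $\int_1^\infty\|e^{t\mathcal{A}}U_0\|^2\,dt \le C\|U_0\|^2$ for \emph{all} $U_0\in\mathcal{H}$, and only then does Datko apply and give exponential decay. (A minor extra caveat: the closing remark that compactness of the resolvent forces $i\mathbb{R}\subset\rho(\mathcal{A})$ is not needed once the cutoff argument is in place, and the compactness itself is not automatic for Kelvin--Voigt; Remark~\ref{rem.fredholm} is phrased cautiously for exactly this reason.)
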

Let us first recall that 
 \begin{equation}\label{above} (\mathcal{A} - i \lambda)\begin{pmatrix} u\\v\end{pmatrix} = \begin{pmatrix}f\\ g\end{pmatrix} \Leftrightarrow \left\{ \begin{aligned} &-i \lambda u + v = f \\
&\Delta u + \text{div} a \nabla_x v- i \lambda v= g \end{aligned}\right.
\end{equation}
From~\cite[Section 2]{Bu20}, we have the following low frequencies  estimates of the resolvent of the operator $\mathcal{A}$: 
\begin{prop} \label{resolv.3}Assume that $a\in L^\infty$ is non negative  $a\geq 0$ and non trivial $\int_\Omega a(x) dx >0$.
Then for any $M>0$, there exists $C>0$ such that for all $\lambda \in \mathbb{R}, |\lambda | \leq M$, the operator $\mathcal{A} - i \lambda$ is invertible from $D( \mathcal{A})$ to $\mathcal{H}$ with estimate
\begin{equation}\label{resolv}
\| ( \mathcal{A} - i \lambda)^{-1} \|_{\mathcal{L}( \mathcal{H})} \leq C .
\end{equation}
\end{prop}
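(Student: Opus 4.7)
The plan is to invoke Remark \ref{rem.fredholm}: since $\mathcal{A}$ has compact resolvent on $\mathcal{H}=H^1_0(\Omega)\times L^2(\Omega)$ (the domain $D(\mathcal{A})$ embeds compactly in $\mathcal{H}$), it suffices to prove an a priori bound $\|(u,v)\|_{\mathcal{H}}\leq C\|(f,g)\|_{\mathcal{H}}$ uniformly in $|\lambda|\leq M$ for solutions of \eqref{above}. I argue by contradiction, supposing sequences $\lambda_n\in[-M,M]$ and $(u_n,v_n)\in D(\mathcal{A})$ with $\|(u_n,v_n)\|_{\mathcal{H}}=1$ such that $(f_n,g_n):=(\mathcal{A}-i\lambda_n)(u_n,v_n)^T\to 0$ in $\mathcal{H}$. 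Passing to a subsequence, $\lambda_n\to\lambda_\infty\in[-M,M]$, $u_n\rightharpoonup u$ in $H^1_0$ with $u_n\to u$ in $L^2$ by Rellich, and $v_n\rightharpoonup v$ in $L^2$.

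The key input is the energy identity. Pairing \eqref{above} with $(u_n,v_n)$ in $\mathcal{H}$ and taking real parts, the cross terms cancel by Hermitian symmetry of $-\Delta$ and $i\lambda_n$ contributes nothing on the real part, leaving
$$\int_\Omega a|\nabla v_n|^2\,dx = -\Re\langle(f_n,g_n),(u_n,v_n)\rangle_{\mathcal{H}}\longrightarrow 0.$$
Hence $\sqrt{a}\,\nabla v_n\to 0$ in $L^2$, and since $a\in L^\infty$ also $a\nabla v_n\to 0$ in $L^2$. The first equation $v_n=f_n+i\lambda_n u_n$ combined with $u_n\to u$ in $L^2$ and $f_n\to 0$ in $L^2$ shows $v_n\to v:=i\lambda_\infty u$ strongly in $L^2$. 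Passing to the limit in the second equation in $\mathcal{D}'(\Omega)$ then yields $\Delta u+\lambda_\infty^2 u=0$ in $\Omega$.

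Next I show $u\equiv v\equiv 0$. On any compact subset of $\{a>\alpha\}$ with $\alpha>0$, $\nabla v_n\to 0$ in $L^2$, so $v$ is locally constant on $\{a>0\}$. If $\lambda_\infty=0$, then $v\equiv 0$ and $u\in H^1_0$ satisfies $\Delta u=0$, forcing $u\equiv 0$. If $\lambda_\infty\neq 0$, then $u=-iv/\lambda_\infty$ is locally constant on $\{a>0\}$, and on each such component the equation $\Delta u+\lambda_\infty^2 u=0$ forces this constant to vanish. Since $\int_\Omega a\,dx>0$, $\{a>0\}$ has positive measure and contains a nonempty open set; unique continuation for the Helmholtz equation (the coefficients are constant, so $H^1$-solutions are real-analytic in the interior) propagates $u\equiv 0$ throughout the connected $\Omega$, whence $v\equiv 0$ as well.

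Finally, I upgrade to strong convergence to contradict $\|(u_n,v_n)\|_{\mathcal{H}}=1$. One already has $v_n\to 0$ in $L^2$. Testing the second equation in \eqref{above} against $\bar{u}_n$ and integrating by parts (both $u_n\in H^1_0$ and $v_n\in H^1_0$ since $(u_n,v_n)\in D(\mathcal{A})$),
$$\|\nabla u_n\|_{L^2}^2 = -\int_\Omega g_n\bar{u}_n\,dx - \int_\Omega a\nabla v_n\cdot\nabla\bar{u}_n\,dx - i\lambda_n\int_\Omega v_n\bar{u}_n\,dx.$$
Each term on the right tends to $0$: the first by $g_n\to 0$ in $L^2$ and $u_n$ bounded in $L^2$; the second by Cauchy--Schwarz using $\sqrt{a}\,\nabla v_n\to 0$ in $L^2$ while $\sqrt{a}\,\nabla u_n$ stays bounded in $L^2$; the third by boundedness of $\lambda_n$ together with $v_n\to 0$ in $L^2$. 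Hence $\|\nabla u_n\|_{L^2}\to 0$, contradicting the normalization. The main substantive step is the unique continuation; everything else is pure energy balance and weak/strong compactness.
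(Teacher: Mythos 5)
The paper dispenses with this proposition by citing~\cite[Section~2]{Bu20}, so you are supplying a proof from scratch. Your overall strategy (contradiction, energy identity, weak/strong limits, then unique continuation) is sound and is essentially the standard argument, but two of your stated justifications are wrong for the hypotheses $a\in L^\infty$, $a\geq 0$, $\int_\Omega a>0$, even though both conclusions can be rescued.

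First, the parenthetical ``the domain $D(\mathcal{A})$ embeds compactly in $\mathcal{H}$'' is false in general. Take $a\equiv 1$: solving $(\mathcal{A}-z)(u,v)=(f,g)$ gives $u=-\frac{1}{1+z}f + K(f,g)$ with $K$ compact, so $(\mathcal{A}-z)^{-1}$ differs from a multiple of $\pi_1$ (the projection onto the first factor) by a compact operator and is therefore not compact; equivalently, $\sigma(\mathcal{A})$ accumulates at $-1$. So the Fredholm reduction of Remark~\ref{rem.fredholm} does not apply verbatim here. The reduction to the a priori estimate is nevertheless valid, but the correct reason is $m$-dissipativity: $\mathcal{A}$ is dissipative ($\Re\langle\mathcal{A}U,U\rangle_{\mathcal{H}}=-\|\sqrt{a}\nabla v\|^2\leq 0$) and $\mathrm{Ran}(I-\mathcal{A})=\mathcal{H}$ by Lax--Milgram, so $\{\Re z>0\}\subset\rho(\mathcal{A})$ with $\|(z-\mathcal{A})^{-1}\|\leq 1/\Re z$; combining this with an a priori bound $\|U\|\leq C\|(\mathcal{A}-i\lambda)U\|$ and a Neumann-series perturbation from $i\lambda+\epsilon$ (with $\epsilon C<1$) yields invertibility of $\mathcal{A}-i\lambda$ with the uniform bound.

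Second, the assertion ``$\{a>0\}$ has positive measure and contains a nonempty open set'' is also false in general: with $a=\mathbf{1}_K$ for a fat Cantor set $K$, the hypotheses hold but $\{a>0\}$ has empty interior, so the ``locally constant on components'' step breaks down. What you actually have is that $\sqrt{a}\nabla v_n\to 0$ strongly and $\nabla v_n\rightharpoonup\nabla v$ weakly (note $v_n=f_n+i\lambda_n u_n$ is bounded in $H^1_0$, so extract a weak $H^1_0$ limit for $v_n$ too), giving $a\nabla v=0$ a.e., hence $\nabla u=0$ a.e. on the positive-measure set $\{a>0\}$ (when $\lambda_\infty\neq 0$). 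Since $u$ solves $\Delta u+\lambda_\infty^2 u=0$ it is real-analytic in $\Omega$, so is $\nabla u$, and a real-analytic function vanishing on a set of positive measure in a connected open set vanishes identically. Thus $u$ is constant, and $u\in H^1_0$ forces $u\equiv 0$. This is a stronger input than unique continuation from an open set, and it is the one you actually need; your appeal to analyticity was on the right track but used the wrong geometric fact about $\{a>0\}$. With these two corrections, the rest of the argument (the energy identity, the upgrade to strong convergence via testing the second equation against $\bar{u}_n$, and the final contradiction) goes through.
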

As a consequence, to prove Theorem~\ref{thm1} it is enough to study the high frequency regime $\lambda \rightarrow + \infty$ and prove 
 \begin{prop}\label{resolvent-HF}
Assume that $a\in C^1(\overline{\Omega})$ is a nonnegative function satisfying \eqref{Hypo1}. Then under the geometric control condition,  there exists $\Lambda_0 >0$ such that for any $|\lambda|> \Lambda_0$ we have
$$ \|(\mathcal{A}-i\lambda)^{-1}\|_{\mathcal{L}(\mathcal{H})}\leq C.
$$ 
\end{prop}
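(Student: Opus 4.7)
The plan is to prove Proposition~\ref{resolvent-HF} by a classical contradiction/semiclassical measures argument, rescaled by $h = 1/\lambda$.

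Assume the bound fails: there exist $\lambda_n \to \pm \infty$ and $U_n=(u_n,v_n)\in D(\mathcal{A})$ with $\|U_n\|_{\mathcal{H}}=1$ and $(\mathcal{A}-i\lambda_n) U_n =(f_n,g_n)\to 0$ in $\mathcal{H}$. Using the invertibility noted in Remark~\ref{rem.fredholm}, it suffices to derive a contradiction from the existence of such a sequence. Using \eqref{above} to eliminate $v_n = f_n+i\lambda_n u_n$, I would rewrite the system as the single semiclassical equation
\begin{equation*}
h_n^2 \Delta u_n + u_n + i h_n \,\text{div}(a\nabla u_n) = R_n,\qquad h_n=1/|\lambda_n|,
\end{equation*}
with $R_n \to 0$ in $L^2$ (modulo terms $\text{div}(a\nabla f_n)$ and $h_n f_n$ which, together with the hypothesis~\eqref{Hypo1} used as $a\nabla f_n = \sqrt{a}(\sqrt{a}\nabla f_n)$ and integration by parts, can be put in $H^{-1}$ with small norm). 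The normalization then becomes $\|u_n\|_{L^2} \sim 1$ and $h_n \|\nabla u_n\|_{L^2} = O(1)$, so we are in the semiclassical $h$-microlocalization setting.

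Next I would exploit the dissipation. Taking $\Re \langle (\mathcal{A}-i\lambda_n)U_n, U_n\rangle_{\mathcal{H}}$ yields $\int_\Omega a(x) |\nabla v_n|^2 \,dx \to 0$, equivalently $\sqrt{a}\,\nabla v_n \to 0$ in $L^2$. Reinjecting $v_n = i\lambda_n u_n + f_n$ and using \eqref{Hypo1} (which gives $\nabla(\sqrt{a}) \in L^\infty$, hence $\sqrt{a}\nabla u_n$ and $\nabla(\sqrt{a}\, u_n)$ differ by a bounded term), I get the \emph{a priori} damping estimate $\sqrt{a}\, h_n\nabla u_n \to 0$ in $L^2$; this is the content of what the introduction describes as the Section~\ref{sec.3} estimates. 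In particular, on any open set $\{a>\delta\}$, $u_n \to 0$ locally in $H^1_h$.

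The core of the proof is the propagation step. Extracting a subsequence, associate to $(u_n)$ a semiclassical defect measure $\mu$ on $T^*\Omega$ (and, following~\cite{BuLe01}, a companion measure on the boundary handling the reflections). The a priori estimate gives $\mu \equiv 0$ on the open set $\{a>\delta\}$. I would then prove the key propagation statement: $\mu$ is invariant under the broken geodesic flow of $\Omega$. Combined with the geometric control hypothesis, every point of $T^*\Omega\setminus 0$ is sent into $\{a>\delta\}$ by the flow in finite time, so $\mu \equiv 0$ on all of $T^*\Omega$. This forces $u_n \to 0$ in $L^2$, hence $U_n \to 0$ in $\mathcal{H}$, contradicting $\|U_n\|_{\mathcal{H}}=1$ once one checks that the $v_n$ component is also controlled by the $u_n$ component via $v_n = i\lambda_n u_n + f_n$ and the elliptic estimate coming from the equation.

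The main obstacle is precisely the propagation: the damping term $i h_n \,\text{div}(a\nabla u_n)$ has the \emph{same semiclassical order} as the principal part and cannot be thrown to the right-hand side as a perturbation, since $h_n^{1/2}\nabla u_n$ is only controlled where $a$ is nondegenerate. The strategy, as announced in the introduction, is to revisit the argument of~\cite{BuLe01}: keep the damping on the left, compute the commutator of the full operator $P_h := h^2\Delta + 1 + ih\,\text{div}\,a\nabla$ with a test symbol $\chi$, and carefully absorb the contribution $[h\,\text{div}\,a\nabla, \chi]$ using hypothesis~\eqref{Hypo1} together with the a priori estimate $\sqrt{a}h\nabla u_n \to 0$ in $L^2$, so that its imaginary part vanishes in the limit. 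The interior invariance is carried out in Section~\ref{sec.5}, and the invariance up to the boundary in Section~\ref{sec.7} invokes the full boundary propagation theorem of~\cite[Théorème 1]{BuLe01} adapted to this non-perturbative damping.
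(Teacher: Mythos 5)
Your proposal is correct and follows essentially the paper's own strategy: rescale to the semiclassical operator $P_h = -h^2\Delta - 1 - ih\,\mathrm{div}\,a\nabla$, run a contradiction argument, use the dissipation to get the a priori estimates $\|\sqrt{a}\,h\nabla u_h\|_{L^2} = o(h)$ (which is in fact slightly sharper than the $o(1)$ you state, and is what is really needed), attach a boundary semiclassical measure as in Burq--Lebeau, and prove propagation invariance by keeping the damping on the left and disposing of it through integration by parts, the commutator bound $[\sqrt{a},Q_h]=O_{\mathcal{L}(L^2)}(h)$ coming from~\eqref{Hypo1}, and the geometric control condition. One technical ingredient you do not mention is the cutoff decomposition $u_h = v_h + w_h$ with $v_h = \chi(a/h)u_h$: the paper defines the measure and carries out the elliptic and propagation estimates for $v_h$, localized to $\{a\lesssim h\}$, which yields the extra factor of $\sqrt{a}\lesssim\sqrt{h}$ needed in several commutator terms; since $w_h\to 0$ in $H^1_h$ this does not change the limiting measure, but it is where the hypothesis~\eqref{Hypo1} does its work, so you should be aware of it before filling in the details.
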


By standard argument, we can reduce the proof of Proposition \ref{resolvent-HF} to a semi-classical estimate. We denote by $0<h=|\lambda|^{-1}\ll 1$ and
$$ P_h=-h^2\Delta-1-ih\div a(x)\nabla.
$$
\begin{prop}\label{resolvent-HFsemi}
There exists $C>0$, such that for all $0<h\ll 1$,
\begin{align}\label{eq:resolvent-HFsemi}
 \|P_h^{-1}\|_{\mathcal{L}(L^2)}\leq Ch^{-1}.
\end{align}
\end{prop}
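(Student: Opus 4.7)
The plan is to argue by contradiction using semi-classical defect measures, in the spirit of \cite{BuLe01}. Suppose \eqref{eq:resolvent-HFsemi} fails; then there exist $h_n\to 0$ and $u_n\in D(P_{h_n})$ with $\|u_n\|_{L^2}=1$ and $\|P_{h_n}u_n\|_{L^2}=o(h_n)$. Write $P_h=R_h+iD_h$ with self-adjoint parts $R_h=-h^2\Delta-1$ and $D_h=-h\,\mathrm{div}(a\nabla\cdot)\geq 0$. The identity
\begin{equation*}
\Im\langle P_{h_n}u_n,u_n\rangle=h_n\int_\Omega a(x)|\nabla u_n|^2\,dx
\end{equation*}
combined with Cauchy--Schwarz yields the crucial damping bound $\|a^{1/2}\nabla u_n\|_{L^2}=o(1)$; taking the real part gives $\|h_n\nabla u_n\|_{L^2}=O(1)$, and a standard elliptic-in-$\xi$ argument (Section~\ref{sec.3}) furnishes the $h$-oscillation of $(u_n)$, so that no mass escapes in frequency.

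Along a subsequence I would extract a nonnegative Radon defect measure $\mu$ on a suitably boundary-compactified phase space, using the geometric framework of Section~\ref{sec.4}. Ellipticity of $R_h$ outside $\{|\xi|_g^2=1\}$ confines $\mathrm{supp}\,\mu$ to the characteristic set; together with $h$-oscillation, this forces $\mu$ to have total mass $1$. The damping bound above translates into $\mu$ vanishing above $\{a(x)>0\}$, and in particular $\mu(\omega_\delta\times S^*\Omega)=0$.

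The heart of the proof is the invariance of $\mu$ along generalized bicharacteristics, which must be established \emph{without} treating $D_h$ as a perturbation. For a self-adjoint test observable $B_h=\mathrm{Op}(\varphi)$, one starts from
\begin{equation*}
2\,\Im\langle B_hu_n,P_{h_n}u_n\rangle=\tfrac{1}{i}\langle[R_{h_n},B_h]u_n,u_n\rangle - \langle(D_{h_n}B_h+B_hD_{h_n})u_n,u_n\rangle,
\end{equation*}
whose LHS is $o(h_n)$. After division by $h_n$ the commutator term contributes $2\int\xi\cdot\nabla_x\varphi\,d\mu$ in the limit, while the anticommutator contributes $2\int a(x)|\xi|^2\varphi\,d\mu$, which vanishes by the damping fact, producing interior invariance (Section~\ref{sec.5}). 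The main obstacle is pushing the invariance up to the boundary (Section~\ref{sec.7}): the propagation theorem of \cite[Th\'eor\`eme~1]{BuLe01} is tailored to a self-adjoint principal operator, and the integration by parts with $\mathrm{div}(a\nabla\cdot)$ generates boundary traces and lower-order commutator terms that cannot be discarded as subprincipal errors. The plan is to revisit the proof of loc.~cit., keeping $D_h$ on the left, and to use the regularity hypothesis~\eqref{Hypo1} via the factorization $\nabla a=a^{1/2}\cdot(a^{-1/2}\nabla a)$ with bounded second factor: every problematic term then contains a factor $a^{1/2}\nabla u_n$, which is $o(1)$ in $L^2$ and is therefore harmless in the limit.

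Once the invariance of $\mu$ is proved up to the boundary, the geometric control condition forces every generalized bicharacteristic to meet $\omega_\delta$ in finite time; invariance together with $\mu|_{\omega_\delta}=0$ then forces $\mu\equiv 0$, contradicting the total mass $1$. This contradiction proves \eqref{eq:resolvent-HFsemi}.
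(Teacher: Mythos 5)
Your proposal follows the paper's strategy: contradiction argument, a priori estimates giving a damping bound, a boundary‑compactified semiclassical measure, interior and boundary propagation obtained by keeping the damping on the left and turning \eqref{Hypo1} into $a^{1/2}$‑gains, then Burq--Lebeau's theorem and geometric control to force $\mu\equiv0$. The key insight you identify — do not treat $\mathrm{div}(a\nabla\cdot)$ as a source term, and factor out $a^{1/2}$ — is exactly the paper's.

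However, several of your specific claims would fail as stated. First, your claim that the anticommutator term contributes $2\int a(x)|\xi|^2\varphi\,d\mu$ does not make sense: $a$ is only $C^1$, so $a(x)|\xi|^2\varphi$ is not an admissible symbol and the Wigner convergence does not apply to it. The paper instead bounds $h^{-1}(Q_hM_hu,B_hu)_{L^2}=o(1)$ directly (Lemma \ref{apriori2}), by writing $Q_ha\Delta u=a^{1/2}Q_ha^{1/2}\Delta u+[Q_h,a^{1/2}]a^{1/2}\Delta u$ and invoking the Lipschitz commutator bound $[Q_h,a^{1/2}]=O_{\mathcal{L}(L^2)}(h)$ (Corollary \ref{Commutator:Lip}). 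Second, the estimate $\|a^{1/2}\nabla u_n\|_{L^2}=o(1)$ you extract from the imaginary part is not enough for this; the argument really needs the stronger Corollary \ref{apriori}, namely $\|a^{1/2}u_h\|+\|a^{1/2}h\nabla u_h\|+\|a^{1/2}h^2\Delta u_h\|=o(h)$, and in particular the $o(h)$ bound on $\|a^{1/2}u_h\|$ requires its own multiplier argument (pair the equation with $a\bar u$ and apply \eqref{Hypo1} repeatedly). Third, you omit two devices the paper relies on: the cutoff decomposition $u_h=v_h+w_h$ with $v_h=\chi(a/h)u_h$ and $\|w_h\|_{H_h^1}=o(h^{1/2})$, which localizes to the low‑damping region and makes the equation $(h^2\Delta+1)v_h=o_{L^2}(h)+o_{H^{-1}}(h^{3/2})$ amenable to the elliptic and boundary arguments; and the hidden regularity estimates $\|h\partial_\nu u\|_{L^2(\partial)}=O(1)$, $\|a^{1/2}h\partial_\nu u\|_{L^2(\partial)}=O(h^{1/2})$ (Lemma \ref{cachee}), without which the boundary trace term in the commutator identity and the measure $\nu_\partial$ are not controlled. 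With these pieces supplied, your plan is the paper's proof.
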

For the proof of Proposition \ref{resolvent-HFsemi}, we argue by contradiction. Assume that there exist sequences $(u_n)\subset H^2\cap H_0^1, (f_n)\subset L^2$ and $h_n\rightarrow 0$, such that $P_{h_n}u_n=f_n$, $\|u_n\|_{L^2}=1$ and $\|f_n\|_{L^2}=o(h_n)$. We will use a semi-classical notation and denote by $(u_h,f_h)$ the sequences with the properties
\begin{align}\label{2.5}
\|u_h\|_{L^2}=1, \quad \|f_h\|_{L^2}=o(h), \quad P_hu_h=f_h.
\end{align}
Sometimes we even omit the subindex for $u_h, f_h$. In the following subsections, we will prove  propagation estimates for such sequences.
\section{A priori estimates}\label{sec.3}
In this section we establish a series of {\em a priori} estimates for the sequence defined in~\eqref{2.5}. 
\begin{lemma}\label{eenergy}
Assume that $a\in L^{\infty}(\Omega)$ is a non-negative function, then
\begin{align}
& (1) \quad \int_{\Omega}\big(|u_h|^2-|h\nabla u_h|^2\big) =\Re\int_{\Omega}f_h\overline{u_h}=o(h);\label{E1}\\
& (2)\quad \int_{\Omega}a(x)|h\nabla u_h|^2=h\Im\int_{\Omega}f_h\overline{u_h}=o(h^2). \label{E2};\\
& (3) \quad h^2\|u_h\|_{H^2}=O(1).
\end{align}	
\end{lemma}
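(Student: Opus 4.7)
The strategy is the classical one for semiclassical resolvent a priori estimates: test the equation $P_h u_h = f_h$ against $\overline{u_h}$, integrate by parts using the Dirichlet condition $u_h|_{\partial\Omega}=0$, and split real and imaginary parts.

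\textbf{Items (1) and (2).} I would compute directly
\begin{equation*}
\int_\Omega (P_h u_h)\,\overline{u_h}\,dx \;=\; h^2\int_\Omega |\nabla u_h|^2\,dx \;-\; \int_\Omega |u_h|^2\,dx \;+\; ih\int_\Omega a(x)|\nabla u_h|^2\,dx,
\end{equation*}
where the $-h^2\Delta$ term contributes $h^2\|\nabla u_h\|^2$ after one integration by parts (no boundary terms), and the damping term $-ih\,\mathrm{div}(a\nabla u_h)$ contributes $ih\int a|\nabla u_h|^2$ similarly. Equating to $\int f_h \overline{u_h}$ and taking the real part yields identity (1), while the imaginary part gives $h\int a|\nabla u_h|^2 = \mathrm{Im}\int f_h\overline{u_h}$, which is identity (2) after multiplying by $h$. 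Both right-hand sides are $o(h)$ by Cauchy--Schwarz, using $\|u_h\|_{L^2}=1$ and $\|f_h\|_{L^2}=o(h)$.

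\textbf{Item (3).} First, from (1) I get $\|h\nabla u_h\|_{L^2}^2 = 1 + o(h) = O(1)$. To upgrade to $H^2$, I rewrite the equation by expanding the divergence $\mathrm{div}(a\nabla u_h) = a\,\Delta u_h + \nabla a\cdot\nabla u_h$:
\begin{equation*}
-(h^2 + i h\,a(x))\,\Delta u_h \;=\; u_h \;+\; i h\,\nabla a\cdot \nabla u_h \;+\; f_h.
\end{equation*}
Since $a\geq 0$, we have the pointwise lower bound $|h^2 + i h a(x)|\geq h^2$, so dividing yields the pointwise inequality $|h^2\Delta u_h|\leq |u_h| + h|\nabla a|\,|\nabla u_h| + |f_h|$. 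Taking $L^2$ norms and using $\|\nabla a\|_{L^\infty}<\infty$ (from $a\in C^1(\overline{\Omega})$) together with $\|h\nabla u_h\|_{L^2}=O(1)$, I obtain $h^2\|\Delta u_h\|_{L^2}=O(1)$. The standard $H^2$ elliptic regularity estimate on a smooth bounded domain with Dirichlet data, namely $\|u_h\|_{H^2}\leq C(\|u_h\|_{L^2}+\|\Delta u_h\|_{L^2})$, then gives $h^2\|u_h\|_{H^2}=O(1)$.

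\textbf{Main obstacle.} Items (1) and (2) are routine energy identities. The only mildly delicate point is (3): the damping $-ih\,\mathrm{div}(a\nabla)$ secretly contains a second-order derivative that at first glance threatens the ellipticity of the principal part. The resolution is to keep $a\,\Delta u_h$ on the left hand side and exploit the fact that the coefficient $h^2+iha(x)$ never vanishes and in fact has modulus $\geq h^2$ because $a\geq 0$; this is precisely the gain that allows the standard elliptic regularity argument to close without having to treat the damping as a perturbation (which, as the authors emphasize in the introduction, would not work in this setting).
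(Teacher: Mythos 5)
Your proof is correct and follows essentially the same route as the paper: the energy identities (1) and (2) come from pairing $P_h u_h = f_h$ with $\overline{u_h}$ and taking real/imaginary parts, and for (3) you divide the expanded equation by the never-vanishing factor $h^2 + iha(x)$ (the paper writes this equivalently as $1 + ih^{-1}a(x)$ with modulus $\geq 1$) and invoke Dirichlet elliptic regularity. The only cosmetic remark is that your sign in (1), $\Re\int f_h\overline{u_h} = \|h\nabla u_h\|^2 - \|u_h\|^2$, is the correct one; the lemma's statement carries a sign typo, harmless since only $= o(h)$ is used.
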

\begin{proof}
	We get (1) and (2) by multiplying the equation $P_hu=f$ by $\overline{u}$, integrating by part and taking the real  and imaginary parts repectively. For (3), from the equation, we have
	$$ h^2\Delta u+u+ih\nabla a\cdot\nabla u+iha\Delta u=-f,\text{ i.e. } h^2\Delta u=-\frac{u+f+i\nabla a\cdot h\nabla u}{1+ih^{-1}a(x)}.
	$$
	From the global estimate of the Poisson equation
	$$ \|w\|_{H^2}\leq C\|\Delta w\|_{L^2},\forall w\in H^2\cap H_0^1,
	$$
	we obtain that $\|h^2u\|_{H^2}=O(1)$.
\end{proof}
\begin{coro}\label{apriori}
	Assume that $a\in C^{1}(\overline{\Omega})$ is a non-negative function satisfying \eqref{Hypo1}, then
\begin{align}\label{eq:apriori}
\|a^{\frac{1}{2}}u_h\|_{L^2}+\|a^{\frac{1}{2}}h\nabla u_h\|+\|a^{\frac{1}{2}}h^2\Delta u_h\|_{L^2}=o(h).
\end{align}
\end{coro}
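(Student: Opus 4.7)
The plan is to prove the three $o(h)$ bounds in turn. The middle term $\|a^{1/2}h\nabla u_h\|_{L^2}=o(h)$ is immediate from identity \eqref{E2} of Lemma~\ref{eenergy}. What remains is to derive $\|a^{1/2} u_h\|_{L^2}=o(h)$ and $\|a^{1/2} h^2\Delta u_h\|_{L^2}=o(h)$, and in both cases the hypothesis \eqref{Hypo1} will be used crucially. Throughout I drop the subscript $h$ on $u,f$ as is customary.

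For the first norm, I would test the equation $P_h u=f$ against $a\overline{u}$, which lies in $H^1_0(\Omega)$ since $a\in C^1(\overline{\Omega})$ vanishes on $\partial\Omega$ only when $\nabla a$ does too and $u\in H^2\cap H^1_0$. Two integrations by parts (on the $-h^2\Delta u$ and on the $-ih\,\mathrm{div}(a\nabla u)$ terms) and taking the real part produces the identity
\[
\int_{\Omega} a|u|^2 \;=\; \|a^{1/2}h\nabla u\|^2 \;+\; h^2\,\Re\!\int_{\Omega}\!\overline{u}\,\nabla a\cdot\nabla u \;-\; h\,\Im\!\int_{\Omega}\! a\,\overline{u}\,\nabla a\cdot\nabla u \;-\; \Re\!\int_{\Omega}\! a\,\overline{u}\,f,
\]
the purely imaginary term $ih\int a^2|\nabla u|^2$ dropping out. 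The first term on the right is $o(h^2)$ by \eqref{E2}. Each of the other three is bounded by \eqref{Hypo1}: the key move is to split $h^2=h\cdot h$ in the cross term and write
\[
h^2\int a^{1/2}|u||\nabla u| \;=\; \int (a^{1/2}h|\nabla u|)(h|u|) \;\leq\; \|a^{1/2}h\nabla u\|\cdot h\|u\| \;=\; o(h)\cdot h \;=\; o(h^2).
\]
The two remaining terms give contributions of the form $o(h)\|a^{1/2} u\|$ (using $\|f\|=o(h)$ and $\|a^{1/2} h\nabla u\|=o(h)$ respectively, together with $\|a\|_\infty^{1/2}\|a^{1/2}u\|\geq \|au\|$), and Young's inequality absorbs these into $\tfrac12\|a^{1/2} u\|^2+o(h^2)$. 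This yields $\|a^{1/2} u\|=o(h)$.

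For the third norm I would solve algebraically for $h^2\Delta u$ from $P_hu=f$: rewriting the equation as $-h(h+ia)\Delta u = u+f+ih\,\nabla a\cdot \nabla u$ gives
\[
|a^{1/2}h^2\Delta u|^2 \;=\; \frac{ah^2}{h^2+a^2}\,\bigl|u+f+ih\,\nabla a\cdot \nabla u\bigr|^2.
\]
Here I would use the two elementary bounds $\frac{ah^2}{h^2+a^2}\leq \min\!\bigl(a,\tfrac{h}{2}\bigr)$ and $\frac{a^2h^2}{h^2+a^2}\leq \tfrac{ah}{2}$ (both by AM--GM on the denominator), together with $|\nabla a|^2\leq Ca$ from \eqref{Hypo1}. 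Each of the three contributions is then $o(h^2)$: the factor $a$ combined with the step-two conclusion $\|a^{1/2} u\|^2=o(h^2)$ handles $|u|^2$; the factor $h/2$ combined with $\|f\|^2=o(h^2)$ handles $|f|^2$; and the improved bound $\frac{a^2h^2}{h^2+a^2}\leq \tfrac{ah}{2}$ converts $h^2|\nabla u|^2$ into $\|a^{1/2}h\nabla u\|^2=o(h^2)$ with a spare $h/2$.

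The main obstacle is the cross term $h^2\int \overline{u}\,\nabla a\cdot \nabla u$ appearing in the identity for $\|a^{1/2} u\|$. A naive estimate using $\|\nabla u\|_{L^2}=O(h^{-1})$ makes it only $O(h)$, and the required improvement to $o(h^2)$ depends on pairing one factor of $h$ with $\nabla u$ (to activate Lemma~\ref{eenergy}\,(2)) while simultaneously using \eqref{Hypo1} to transfer one $a^{1/2}$ onto the gradient factor rather than onto $u$. Once this bookkeeping is correctly set up, every other estimate in the proof is a straightforward application of Cauchy--Schwarz.
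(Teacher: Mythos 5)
Your proof is correct and follows essentially the same route as the paper: multiply the equation by $a\overline{u}$, integrate by parts, take the real part, and bound each term using the key hypothesis $|\nabla a|\leq Ca^{1/2}$; then solve the equation algebraically for $h^2\Delta u$ and estimate the resulting fraction. The only cosmetic difference is that you bound $\frac{ah^2}{h^2+a^2}$ by AM--GM where the paper uses $|1+ih^{-1}a|^{-1}\leq 1$, but these are equivalent in effect.
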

\begin{proof}
We only need to estimate $\int_{\Omega}a(x)|u|^2$, since $\int_{\Omega}a(x)|h\nabla u|^2=o(h^2)$ is just \eqref{E2}.
Multiplying $P_hu=f$ by $a\overline{u}$ and taking the real part, we have
\begin{equation*}
\int_{\Omega}a(x)|u|^2=\Re\int_{\Omega}h\nabla u\cdot h\nabla (a\overline{u})-\Im h\int_{\Omega}a(x)\nabla u\cdot \nabla(a(x)\overline{u})+\Im\int_{\Omega}a(x)f\overline{u}.
\end{equation*}
Since $|\nabla a|\leq Ca^{\frac{1}{2}}$, the first term on the r.h.s. can be bounded by 
$$ \|a^{\frac{1}{2}}h\nabla u\|_{L^2}^2+h\|\nabla a h\nabla u\|_{L^2}\|u\|_{L^2}=o(h^2).
$$ The third term of r.h.s is bounded by $o(h)\|a^{\frac{1}{2}}u\|_{L^2}$, and the second term can be bounded by
$$ h\Big|\int_{\Omega}a\nabla a \cdot \overline{u}\nabla u \Big|\leq h\|a^{\frac{1}{2}}\overline{u}\|_{L^2}\|a^{\frac{1}{2}}\nabla a\nabla u\|_{L^2}\leq C\|a^{\frac{1}{2}}h\nabla u\|\|a^{\frac{1}{2}}u\|_{L^2}=o(h)\|a^{\frac{1}{2}}u\|_{L^2}.
$$
For the second derivative, we observe that
$$ a(x)^{\frac{1}{2}}h^2\Delta u=-\frac{a^{\frac{1}{2}}u+a^{\frac{1}{2}}f+iha^{\frac{1}{2}}\nabla a\cdot h\nabla u}{1+ih^{-1}a(x)},
$$
thus $\|a^{\frac{1}{2}}h^2\Delta u\|_{L^2}=o(h)$.
This completes the proof of Corollary \ref{apriori}.
\end{proof}
Let $\nu$ be the out-normal vector field on $\partial\Omega$. We denote by $L^2(\partial)=L^2(\partial\Omega)$. The following hidden regularity holds:
\begin{lemma}\label{cachee}
Assume that $a\in C^1(\overline{\Omega})$ is a nonnegative function satisfying \eqref{Hypo1}, then
$$ \|h\partial_{\nu}u\|_{L^2(\partial)}=O(1),\quad \|a^{\frac{1}{2}}h\partial_{\nu}u\|_{L^2(\partial)}=O(h^{\frac{1}{2}}).
$$
\end{lemma}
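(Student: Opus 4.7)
I would establish both estimates by the Rellich--Pohozaev multiplier method. Fix a smooth vector field $q$ on $\overline{\Omega}$ with $q\cdot\nu=1$ on $\partial\Omega$. Pairing $P_hu=f$ with a multiplier of the form $\varphi\,q\cdot\nabla\overline{u}$, integrating by parts, and taking the real part extracts the boundary integral $\int_{\partial\Omega}\varphi\,|h\partial_\nu u|^2\,d\sigma$ from the Laplacian piece via Rellich's identity; the remaining terms are then controlled using the a priori bounds of Lemma~\ref{eenergy} and Corollary~\ref{apriori}.

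For the first estimate I take $\varphi\equiv 1$. The Laplacian yields $h^2\int_{\partial\Omega}|\partial_\nu u|^2\,d\sigma$ plus interior remainders of size $\|h\nabla u\|_{L^2}^2=O(1)$. Substituting $-h^2\Delta u = u + ih\,\div(a\nabla u) + f$ from the equation, the $u$--term reduces after one integration by parts to $\int(\div\,q)|u|^2=O(1)$, the $f$--term is $o(1)$ by Cauchy--Schwarz, and the damping contribution $2h\,\Im\!\int\div(a\nabla u)\,q\cdot\nabla\overline{u}$ is $o(1)$ by expanding $\div(a\nabla u) = a\Delta u + \nabla a\cdot\nabla u$, factoring each $a$ as $a^{1/2}\cdot a^{1/2}$, invoking \eqref{Hypo1}, and using the weighted bounds $\|a^{1/2}\Delta u\|=o(h^{-1})$ and $\|a^{1/2}\nabla u\|=o(1)$ from Corollary~\ref{apriori}.

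For the second estimate I take $\varphi = a$, so the boundary term from the Laplacian is precisely $\int_{\partial\Omega}a\,|h\partial_\nu u|^2\,d\sigma$. All interior remainders now acquire an extra factor of $a$ or $\nabla a = O(a^{1/2})$, and so are $o(h)$ by the same weighted bounds. The delicate point is the damping contribution, whose principal part reduces (after integration by parts) to $h\,|\Im\!\int a^2\Delta u\,q\cdot\nabla\overline{u}|$. To handle this I would first strengthen Corollary~\ref{apriori} by two auxiliary tests: pairing $P_hu=f$ with $a\overline{u}$, extracting the imaginary part and applying an absorbing Young inequality to the cross term $h\,\Re\!\int a\overline{u}\,\nabla a\cdot\nabla u$ (only absorbable thanks to \eqref{Hypo1}) yields the refined bound $\|a\nabla u\|_{L^2}=o(h^{1/2})$; pairing with $a\Delta\overline{u}$ and reusing this bound gives $\|a\Delta u\|_{L^2}=o(h^{-1/2})$. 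Cauchy--Schwarz then closes the estimate via $h\,\|a\Delta u\|\,\|a\nabla u\|=o(h)$, so that $\int_{\partial\Omega}a|h\partial_\nu u|^2\,d\sigma=o(h)$ and hence $\|a^{1/2}h\partial_\nu u\|_{L^2(\partial)}=O(h^{1/2})$.

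The main obstacle is the refined bound $\|a\nabla u\|_{L^2}=o(h^{1/2})$: it does not follow from Corollary~\ref{apriori} by any weight interpolation, and the proof hinges on using \eqref{Hypo1} precisely in the form that makes $h\,\Re\!\int a\overline{u}\,\nabla a\cdot\nabla u$ Young--absorbable into $h\,\|a\nabla u\|^2$.
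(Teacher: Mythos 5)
Your proof is correct, and the auxiliary bounds you flag as necessary really are necessary for your route, but the paper takes a genuinely different and more economical path. The paper introduces a single $C^2$ extension $L=b_j\partial_j$ of the outward normal and computes the commutator $\bigl([P_h,L]u,u\bigr)_{L^2}$ in two ways. The crucial observation is that when you develop $\bigl(P_hLu,u\bigr)_{L^2}-\bigl(LP_hu,u\bigr)_{L^2}$ by two integrations by parts, the \emph{full} operator $P_h=P_{h,0}+iM_h$ produces \emph{both} boundary terms simultaneously:
\[
\bigl(P_hLu,u\bigr)_{L^2}=\bigl(Lu,P_h^*u\bigr)_{L^2}+h^2\|\partial_\nu u\|^2_{L^2(\partial)}+ih\|a^{1/2}\partial_\nu u\|^2_{L^2(\partial)},
\]
because the damping piece $iM_h=-ih\,\mathrm{div}\,a\nabla$ contributes the weighted term $ih\int_\partial a|\partial_\nu u|^2$. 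Both conclusions of the Lemma then fall out at once by taking real and imaginary parts, and the remaining volume terms are handled with nothing beyond Corollary~\ref{apriori}.

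By taking only the real part of a single Rellich pairing you lose the imaginary boundary term that the damping produces for free, and that forces you to run a second multiplier $a\,q\cdot\nabla\bar u$. This is what creates the need for the refined estimates $\|a\nabla u\|_{L^2}=o(h^{1/2})$ and $\|a\Delta u\|_{L^2}=o(h^{-1/2})$, which are not in Corollary~\ref{apriori}. You are right that these are reachable; in fact the first one falls out directly from the imaginary part of the pairing against $a\bar u$ without any absorbing step, since each term is already $o(h^2)$ by Corollary~\ref{apriori} and hypothesis~\eqref{Hypo1}, and the second can also be read off directly from the equation using $\bigl|a/(1+ih^{-1}a)\bigr|\le\min(a,h)$. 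So the extra machinery is sound, but it is machinery the paper avoids: the one identity already contains the weighted boundary norm. If you keep your Rellich setup, the cleaner move is to take \emph{both} the real and imaginary parts of $(P_hu,\,q\cdot\nabla u)_{L^2}$, which recovers the paper's argument in disguise and removes the need for the second multiplier and the two refined a priori bounds.
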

\begin{proof}
We use the standard multiplier method. Let $L=b_j(x)\partial_j$ be an $C^2$ extension of the out-normal vector field $\nu$, where $b_j$'s are supported in a neighborhood of $\partial\Omega$. Write $P_h=P_{h,0}+iM_{h}$, where
$$ P_{h,0}=-h^2\Delta-1,\quad M_{h}=-h\div a(x)\nabla
$$
are self-adjoint operators. Consider the commutator $[P_h,L]=[P_{h,0},L]+i[M_{h},L]$. Note that $[P_{h,0},L]=\frac{1}{h}[P_{h,0},hL]$ belongs to $h^2\Op(S^2)$\footnote{Strictly speaking, the symbol of $L$ is not $C^{\infty}$, but here we only need  $\frac{1}{h}[P_{h,0},hL]=O(h^2)$ on $L^2$.  }, we deduce that $\big([P_{h,0},L]u,u\big)_{L^2}=O(1)$. By direct computation, we have
\begin{equation*}
\begin{split}
-\big([M_{h},L]u,u\big)_{L^2}=&\big(h\partial_k[a\partial_k,b_j\partial_j]u,u  \big)_{L^2}+h\big([\partial_k,b_j\partial_j]a\partial_ku,u \big)_{L^2}\\
=&\big(h\partial_k(a (\partial_kb_j)\partial_ju-b_j(\partial_j a)\partial_ku ),u \big)_{L^2}+\big((\partial_kb_j) h\partial_j(a\partial_ku),u \big)_{L^2}\\
=&-\big((a\partial_kb_j)\partial_ju-b_j(\partial_ja)\partial_ku,h\partial_ku \big)_{L^2}-\big(a\partial_ku, h\partial_j((\partial_kb_j) u). \big)_{L^2}
\end{split}
\end{equation*} 
From Corollary \ref{apriori}, the absolute value of the r.h.s. can be bounded by constant times
$$ \|a\nabla u\|_{L^2}+\|\nabla a\nabla u\|_{L^2}=o(1).
$$
Therefore, $\big([P_h,L]u,u\big)_{L^2}=O(1)$. On the other hand, by developing the commutator and exploiting the equation, we have
\begin{equation*}
\begin{split}
\big([P_h,L]u,u\big)_{L^2}=&\big(P_hLu,u\big)_{L^2}-\big(Lf,u\big)_{L^2}\\
=&\big(Lu,P_h^*u\big)_{L^2}-\big(f,L^*u\big)_{L^2}+h^2\|\partial_{\nu}u\|_{L^2(\partial)}^2+ih\|a^{\frac{1}{2}}\partial_{\nu}u\|_{L^2(\partial)}^2.
\end{split}
\end{equation*}
Observe that
$$\big(Lu,P_h^*u\big)_{L^2}=\big(Lu,f-2M_{h}u\big)_{L^2}=o(1)-2\big(Lu,M_{h} u\big)_{L^2}=o(1)-2\big(Lu,h\nabla a\cdot \nabla u+ha\Delta u\big)_{L^2}.
$$
Since $L$ is a first order differential operator and from Corollary \ref{apriori} that $\|a^{\frac{1}{2}}h\Delta u\|_{L^2}=o(1)$, we have 
$$ |\big(Lu,h\nabla a\cdot\nabla u+ha\Delta u \big)_{L^2}|\leq \|h\nabla u\|_{L^2}\|\nabla a \nabla u\|_{L^2}+\|a^{\frac{1}{2}}\nabla u\|_{L^2}\|a^{\frac{1}{2}}h\Delta u\|_{L^2}=o(1).
$$
Therefore,
$$ \|h\partial_{\nu}u\|_{L^2(\partial)}^2+ih\|a^{\frac{1}{2}}\partial_{\nu}u\|_{L^2(\partial)}^2=O(1).
$$
The proof of Lemma \ref{cachee} is then completed by taking real and imaginary parts.
\end{proof}

Let $\chi\in C_c^{\infty}(\mathbb{R})$ such that $\chi(z)\equiv 1$ for $|z|\leq 1$ and $\chi(z)\equiv 0$ for $|z|>2$. We decompose
\begin{equation}\label{decomposition}
u_h=v_h+w_h,\quad v_h=\chi\big(a h^{-1}\big)u_h,\quad  w_h=\big(1-\chi\big(a h^{-1}\big) \big)u_h.
\end{equation}
In the rest of this note, we always assume that $a\in C^1(\overline{\Omega})$ is a nonnegative function satisfying \eqref{Hypo1}.
\begin{lemma}\label{dampedregion}
We have
$$ \|w_h\|_{L^2}+\|h\nabla w_h\|_{L^2}=o(h^{\frac{1}{2}}),\quad	
 \|a^{\frac{1}{2}}v_h\|_{L^2}+ \|a^{\frac{1}{2}}h\nabla v_h\|_{L^2}=o(h),
$$
and
$$ \|u_h\|_{H_h^1(a\geq c h) }+\|v_h\|_{H_h^1(a\geq ch)}=o(h^{\frac{1}{2}}).
$$
\end{lemma}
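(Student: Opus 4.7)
The plan is to exploit the fact that $v_h$ is supported in $\{a\leq 2h\}$ while $w_h$ is supported in $\{a\geq h\}$, and to feed these localizations into the weighted bounds from Corollary~\ref{apriori}, namely
\[
\|a^{\frac{1}{2}}u_h\|_{L^2}+\|a^{\frac{1}{2}}h\nabla u_h\|_{L^2}=o(h).
\]
The hypothesis $|\nabla a|\leq Ca^{\frac{1}{2}}$ will play a crucial role whenever a derivative falls on the cutoff $\chi(a/h)$.

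First I would handle the $H^1_h(\{a\geq ch\})$ bound, which is the cleanest. On $\{a\geq ch\}$ we have $1\leq (ch)^{-1/2}a^{\frac{1}{2}}$, so
\[
\|u_h\|_{L^2(a\geq ch)}\leq (ch)^{-\frac12}\|a^{\frac{1}{2}}u_h\|_{L^2}=o(h^{\frac12}),\qquad \|h\nabla u_h\|_{L^2(a\geq ch)}\leq (ch)^{-\frac12}\|a^{\frac{1}{2}}h\nabla u_h\|_{L^2}=o(h^{\frac12}),
\]
and the same estimate for $v_h$ then follows from the product rule applied to $\chi(a/h)u_h$ together with the bound $|\nabla\chi(a/h)|=h^{-1}|\chi'(a/h)||\nabla a|\leq Ch^{-1}a^{\frac{1}{2}}\,\mathbf{1}_{h\leq a\leq 2h}\leq C' h^{-\frac12}\mathbf{1}_{a\geq h}$.

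For $w_h$, I would use that $w_h$ is supported in $\{a\geq h\}$, which gives immediately
\[
\|w_h\|_{L^2}\leq \|u_h\|_{L^2(a\geq h)}\leq h^{-\frac12}\|a^{\frac{1}{2}}u_h\|_{L^2}=o(h^{\frac12}).
\]
For the gradient, I expand
\[
h\nabla w_h=\bigl(1-\chi(a/h)\bigr)h\nabla u_h - \chi'(a/h)\nabla a\, u_h .
\]
The first piece is controlled by the same weight trick $\|(1-\chi(a/h))h\nabla u_h\|_{L^2}\leq h^{-\frac12}\|a^{\frac{1}{2}}h\nabla u_h\|_{L^2}=o(h^{\frac12})$. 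For the second piece, the key observation is that on $\supp\chi'(a/h)$ we have $a\in[h,2h]$, so $|\nabla a|\leq Ca^{\frac12}\leq C\sqrt{2h}$; combining this with $\|u_h\|_{L^2(a\geq h)}=o(h^{\frac12})$ gives a bound of order $\sqrt{h}\cdot o(h^{\frac12})=o(h)$, even better than required.

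For $v_h$, the $L^2$ bound with weight $a^{\frac12}$ is simply
\[
\|a^{\frac{1}{2}}v_h\|_{L^2}\leq \|a^{\frac{1}{2}}u_h\|_{L^2}=o(h).
\]
For $\|a^{\frac{1}{2}}h\nabla v_h\|_{L^2}$ I expand again: the part $a^{\frac{1}{2}}\chi(a/h)h\nabla u_h$ is dominated by $\|a^{\frac{1}{2}}h\nabla u_h\|_{L^2}=o(h)$, while in the remainder $a^{\frac12}\chi'(a/h)\nabla a\, u_h$ the factor $a^{\frac12}|\nabla a|\leq Ca$ is pointwise $\leq 2Ch$ on the support of $\chi'(a/h)$, so this term is $O(h)\cdot\|u_h\|_{L^2(a\geq h)}=O(h)\cdot o(h^{\frac12})=o(h^{\frac{3}{2}})$. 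The only point requiring care, and the main (mild) obstacle in the argument, is precisely this interaction of the cutoff derivative with $\nabla a$: the naive bound would lose a full power of $h^{-1}$, and it is only the regularity assumption $|\nabla a|\leq Ca^{\frac{1}{2}}$~\eqref{Hypo1} together with $a\sim h$ on $\supp\chi'(a/h)$ that recovers enough smallness to close each of the estimates.
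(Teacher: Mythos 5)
Your proof is correct and follows the same route as the paper: localize to the region where $a\gtrsim h$ (and $a\sim h$ on $\supp\chi'(a/h)$), feed in the weighted bounds of Corollary~\ref{apriori}, and use the hypothesis $|\nabla a|\leq Ca^{1/2}$ to control the term where the derivative falls on the cutoff. The paper's argument is just a more terse version of the same computations.
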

\begin{proof}
	By definition,
$$ \int_{\Omega}\big(|w|^2+|h\nabla w|^2\big)\leq\int_{a\geq h}\big(|u|^2+|h\nabla u|^2+|\nabla a|^2|u|^2 \big).
$$
The conclusion then follows from Corollary \ref{apriori} and the fact that $|\nabla a|^2\leq Ca$. Similarly, for any other cutoff to the region $a\geq c h$, we deduce that
$ \|u_h\|_{H_h^1(a\geq c h)}=o(h^{\frac{1}{2}}).
$
For the estimate of $v$, note that
$a^{\frac{1}{2}}h\nabla v=a^{\frac{1}{2}}\chi h\nabla u+ a^{\frac{1}{2}}\nabla a\chi'v$, from Corollary \ref{apriori}, we have
$$ \|a^{\frac{1}{2}}h\nabla v\|_{L^2}\leq \|a^{\frac{1}{2}}h\nabla u\|_{L^2}+\|\chi'a^{\frac{1}{2}} (a^{\frac{1}{2}}v)\|_{L^2}=o(h).
$$
This completes the proof of Lemma \ref{dampedregion}.
\end{proof}

\section{Geometry, semi-classical measures}\label{sec.4}
Having the {\em a priori} estimates of the previous section at hand, we can now  study $v_h$. For some subsequence of $v_h$, we will associate it a semi-classical measure and then prove the invariance of the measure under the generalized geodesic flow. First recall some geometric preliminaries from \cite{BuLe01}.

 \subsection{Geometry}
Denote  by ${^b}T \Omega$  the bundle of rank $d$ whose sections are the vector fields tangent to $\partial \Omega$, $^bT^* \Omega$ the dual bundle (Melrose's compressed cotangent bundle) and  $j : T^* \Omega \rightarrow {^b}T^* \Omega$ the canonical map. In any coordinate system where $\Omega=\{ x= (x_{d}>0, x')\}$), the bundle $^bT \Omega$ is generated by the fields $\frac{\partial }{ \partial x'}$,
$x_{d} \frac{\partial }{ \partial x_{d}}$ and $j$ is defined by
\begin{equation} 
  j(x_{d},x',\xi_{d} ,\xi' )=(x_{d},x',v=x_{d}\xi_{d}, \xi' ) .
 \end{equation}
Denote by $\text{Car} {P_0}$ the semi-classical characteristic manifold of ${P}_0= -h^2\Delta -1$ and $Z$ its projection
\begin{equation} \label{eq:5}
\text{Car} {P}_0=\left\{ (x,\xi )= (x',x_d,\xi',\xi_d)\in T^* {\mathbb R}^d\mid _{\overline \Omega} ; p(x, \xi) =0\right\}, \qquad  
Z=j(\text{Car}P_0).
 \end{equation}
The set $Z$ is a locally compact metric space.\par
Consider, near a point $x_{0}\in \partial \Omega$ a geodesic system of coordinates for which $x_{0}= (0,0)$, $\Omega= \{(x_{d},x')\in {\mathbb R}^+\times {\mathbb R}^{ d-1}\}$ and the operator ${P}_0$ has the form  (near $x_{0}$)
\begin{equation}
\label{3.22}P_{h,0} = -h^2 \Delta -1= h^2 D_{x_{d}}^2  - R(x_{d}, x',hD_{x'})+ h Q(x, hD_{x}), 
\end{equation}
with $R$ a second order tangential operator and $Q$ a first order operator.\par
We recall now the usual decomposition of $T^*\partial \Omega$ (in this coordinate system). Denote by $r( x',x_d,\xi')$ the semi-classical principal symbol of $R$ and $r_{0}= r\mid _{x_{d}=0}$. Then $T^*\partial \Omega$ is the disjoint union of $\mathcal{ E}\cup \mathcal { G} \cup \mathcal{ H}$ with 
\begin{equation}
\label{3.24}\mathcal{ E}= \{ r_{0}<0\}, \mathcal{ G}=\{r_{0}=0\}, \mathcal{ H}= \{r_{0}>0\}.
\end{equation}
 Remark that $j$ gives a natural identification between $Z\mid_{\partial M}$ and ${\mathcal {H}}\cup {\mathcal {G}} \subset T^* \partial M$.
In $\mathcal{G}$ we distinguish between the {\em diffractive} points $\mathcal{ G}^{2,+}=\{ r_{0}=0, r_{1}= \partial _{x_{d}}r\mid _{x_{d}=0}>0\}$ and the {\em gliding} points $\mathcal{ G}^{-}=\{ r_{0}=0, r_{1}= \partial _{x_{d}}r\mid _{x_{d}=0}\leq 0\}$. We will make the assumption ($\Omega$ has no infinite order contact with its tangents) that for any $\varrho_{0}\in T^*\partial M$, there exists $N\in {\mathbb N}$ such that
$$H_{r_{0}}^N (r_{1})\neq 0$$\par
 The definition of the generalized bicharacteristic flow, $\varphi_{s}$ associated to the operator $P_0$ is essentially the definition given in~\cite{MeSj82}:
\begin{definition}
A generalized bicharacteristic curve $\gamma(s)$ is a continuous curve from an interval $I \subset {\mathbb R}$ to $ Z$ such that
\begin{enumerate}
\item if $s_{0}\in I$ and $\gamma(s_{0})\in T^* \Omega$ then close to $s_{0}$, $\gamma$ is an integral curve of the Hamiltonian vector field $H_{\widetilde p}$
\item If $s_{0}\in I$ and $\gamma(s_{0})\in {\mathcal {H}}\cup {\mathcal {G}}^{2,+}$ then there exists $\varepsilon>0$ such that for $0<|s-s_{0}|<\varepsilon$,  $x_{d}(\gamma(s))>0$
\item If $s_{0}\in I$ and $\gamma(s_{0})\in{\mathcal {G}}^{-}$ then for any function $f \in C^\infty(T^*{\mathbb R}^{d}\mid_{\overline{ \Omega}})$ satisfying the symmetry condition 
\begin{equation}
\label{eq.sym}
\forall \varrho_{0} \in Z, \forall \widehat {\varrho_{0}}, \widetilde{ \varrho_{0}}\in j^{-1}(\varrho_{0})\cap \text{Car} (\widetilde P), f(\widehat {\varrho_{0}}) = f(\widetilde{ \varrho_{0}})\end{equation}
then
$$\frac d {ds} f(j(\gamma(s))\mid_{s= s_{0}}= H_{\widetilde p}\mid_{j^{-1}( \gamma(s_{0}))} f(j^{-1}(\gamma( s_{0}))) $$
\end{enumerate}
\end{definition}
It is proved in~\cite{MeSj82} that under the assumption of no infinite order contact, through every point $\varrho_{o}\in {^b}T^* M \setminus \{0\}$ there exists a unique generalized bicharacteristic (which is furthermore a limit of bicharacteristics having only hyperbolic contacts with the boundary). This defines the flow $\Phi$. 
\subsection{Wigner measures}
Consider functions
$a=a_i+a_\partial $ with $a_{i}\in C^\infty_{0}(T^* M)$, and $a_{\partial}\in C^\infty_{0}({\mathbb R}^{2d-1})$. Such symbols are quantized in the following way:
take $\varphi_{i}\in C^\infty_{0}(M)$  (resp $\varphi_{\partial}\in C^\infty_{0}({\mathbb R}^d)$) equal to $1$ near the $x$-projection of $\text{supp}(a_{i})$ (resp the $x$-projection of $\text{supp}(a_{\partial})$) and define
\begin{multline}
\label{eq3.1bis}
\text{Op}_{h}^{\varphi_{i}, \varphi_{\partial}}(a)(x, hD_{x})f= \frac 1 { (2\pi h)^d} \int e^{ i(x-y)\cdot \xi/h}a_{i}(x, \xi)\varphi_{i}(y)f(y) dy d\xi \\
+ \frac 1 { (2\pi h)^{ d-1}} \int e^{ i(x'-y')\cdot \xi'/h}a_{\delta}(x_{d}, x', \xi)\varphi_{\delta}(x_{d}, y')f(x_{d}, y') dy' d\xi'.   
\end{multline}
Remark that according to the symbolic semi-classical calculus, the operator $\text{Op}_h^{\varphi_{i}, \varphi_{\partial}}(a)$ does not depend on the choice of functions $\varphi_{i}, \varphi_{\partial}$, modulo operators on $L^2$ of norms bounded by $O(h^\infty)$. For conciseness we shall in the sequel drop the index $\varphi_{i}, \varphi_{\partial}$.\par
Denote by ${\mathcal A}^h$ the space of the operators which are a finite sum of operators obtained as above in suitable coordinate systems near the boundary and for $B\in {\mathcal {A}}$, by $b=\sigma(B)$ the semiclassical symbol of the operator $A$. For such functions $b$ we can define  $\kappa(b)\in C^0(Z)$ by
\begin{equation} 
\kappa(b)(\rho )=b(j^{-1}(\rho )) \label{eq:9}
 \end{equation} 
(the value is independent of the choice of $j^{ -1}( \rho)$ since the operator is tangential). \par
The set 
\begin{equation} 
\{\kappa(b),b=\sigma (B), B\in {\mathcal A}^h\} \label{eq:10}
 \end{equation} is a locally dense subset of
$C^0_{c}(Z)$.\par
\subsubsection{Elliptic regularity}
The sequence $v_h$ satisfies (with $\chi_h = \chi (a/h)$)
\begin{align*}
P_hv_h=&\chi_hP_hu_h-h^2\mathrm{div}(\nabla\chi_h u_h)-h^2\nabla\chi_h\nabla u_h
-ih\mathrm{div}(a\nabla\chi_h u_h)-iha\nabla\chi_h\cdot\nabla u_h.
\end{align*}
Since $\nabla {\chi_h}=h^{-1}\chi'(a/h)\nabla a$ and $|\nabla a|\lesssim a^{\frac{1}{2}}$, by Corollary \ref{apriori}, we have
$$ P_hv_h=o_{L^2}(h)+o_{H^{-1}}(h^2). 
$$
Thus
$$ (h^2\Delta + 1) v_h=-ih\mathrm{div}(a\nabla(\chi_h u) )+o_{L^2}(h)+o_{H^{-1}}(h^2).$$
Using Corollary \ref{apriori} again and the fact that $a\lesssim h$ on the support of $\chi_h$, we deduce that 
$ha\nabla(\chi_hu)=o_{L^2}(h^{\frac{3}{2}})$, hence  
$$  (h^2\Delta + 1) v_h = o(h^{\frac{3}{2}})_{H^{-1}(\Omega)}+o_{L^2}(h).$$

We deduce, by standard elliptic regularity results 
\begin{prop}
If $a_{i}$ is equal to $0$ near $\mathrm{Car}(P_0)$ then 
\begin{equation}
\label{eq3.20}\lim_{k\rightarrow + \infty}\left(\Op_{h_k}(a_{i})v_{h_k}, v_{h_k}\right)_{ L^2}=0,
\end{equation}
\end{prop}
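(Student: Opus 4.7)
The statement is a standard semiclassical elliptic regularity conclusion, and the plan is to exploit the identity $(h^2\Delta+1)v_h = o(h)_{L^2}$ established just before the proposition, together with the ellipticity of $P_0 = -h^2\Delta-1$ on the support of $a_i$. Since $a_i \in C_c^\infty(T^*\Omega)$ vanishes in a neighborhood of $\mathrm{Car}(P_0) = \{|\xi|^2 = 1\}$, the principal symbol $p(x,\xi) = |\xi|^2 - 1$ satisfies $|p| \geq c > 0$ on $\supp a_i$, so the quotient
\[
b_0(x,\xi) := -\frac{a_i(x,\xi)}{p(x,\xi)}
\]
lies in $C_c^\infty(T^*\Omega)$. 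Note that the $x$-projection of $\supp a_i$ is a compact subset of the open set $\Omega$, so the boundary of $\Omega$ plays no role here; we may freely use the standard semiclassical calculus on $\mathbb{R}^d$ after inserting interior cutoffs $\varphi_i$ as in the definition of $\Op_h$.

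The next step is the parametrix identity. By the semiclassical composition formula,
\[
\Op_h(b_0)\,P_0 = \Op_h(b_0\, p) + h\,\Op_h(c_1) + O(h^2)_{\mathcal{L}(L^2)} = -\Op_h(a_i) + h\,R_h,
\]
where $c_1$ is a Poisson bracket term compactly supported in $T^*\Omega$ and $R_h$ is a family of operators uniformly bounded on $L^2$. Equivalently,
\[
\Op_h(a_i) = -\Op_h(b_0)\,P_0 + h\,R_h.
\]

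Taking the $L^2$ pairing against $v_h$, we obtain
\[
\bigl(\Op_h(a_i)\,v_h,\,v_h\bigr)_{L^2} = -\bigl(P_0 v_h,\,\Op_h(b_0)^* v_h\bigr)_{L^2} + h\,\bigl(R_h v_h,\,v_h\bigr)_{L^2}.
\]
The second term is $O(h)$ because $\|v_h\|_{L^2} \leq \|u_h\|_{L^2} = 1$ and $R_h$ is uniformly bounded. For the first term, the a priori computation just before the statement gives $\|(h^2\Delta+1)v_h\|_{L^2} = o(h)$, hence $\|P_0 v_h\|_{L^2} = o(h)$, while $\Op_h(b_0)^*$ is uniformly bounded on $L^2$, so the pairing is $o(h)\cdot O(1)$. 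Combining, $\bigl(\Op_h(a_i)v_h,v_h\bigr) = O(h) \to 0$ along the subsequence $h = h_k$.

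There is no genuine obstacle here: the boundary does not enter since $\supp a_i$ is interior, and the slightly more delicate $o(h^{3/2})_{H^{-1}}$ remainder is not even needed because the $o(h)_{L^2}$ bound already suffices. The only care required is to ensure the cutoffs $\varphi_i$ in the definition of $\Op_h$ are chosen so that $\varphi_i = 1$ on the $x$-projection of $\supp(b_0) \cup \supp(a_i)$, which makes all symbolic compositions legitimate in standard $\mathbb{R}^d$ calculus.
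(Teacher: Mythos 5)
Your parametrix construction is the right approach, and the paper itself only says ``by standard elliptic regularity results'' without spelling it out, so the plan is fine in principle. However, there is a concrete error in the input you feed into it: you assert that $(h^2\Delta+1)v_h = o(h)_{L^2}$, and then explicitly dismiss the $o(h^{3/2})_{H^{-1}}$ term as ``not even needed.'' This is false. What the paper actually establishes is
\begin{equation*}
(h^2\Delta+1)v_h = -\,ih\,\mathrm{div}\bigl(a\nabla(\chi_h u)\bigr) + o_{L^2}(h) + o_{H^{-1}}(h^2),
\end{equation*}
and the damping term $-ih\,\mathrm{div}(a\nabla(\chi_h u))$ is only controlled in $H^{-1}$, via $ha\nabla(\chi_h u) = o_{L^2}(h^{3/2})$. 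It is \emph{not} $o(h)_{L^2}$: for instance $ha\Delta v_h$ is, at best, $O(h^{1/2})$ in $L^2$ using $a\lesssim h$ on $\mathrm{supp}\,\chi_h$ and $\|a^{1/2}h^2\Delta u\|_{L^2}=o(h)$, and no combination of the a priori bounds in Lemma~\ref{eenergy}, Corollary~\ref{apriori}, or Lemma~\ref{dampedregion} improves this to $o(h)$ in $L^2$. So when you write the final pairing as $-(P_0 v_h,\Op_h(b_0)^*v_h)_{L^2}$ and argue ``$o(h)\cdot O(1)$,'' the argument does not cover the damping contribution, which is exactly the delicate term this paper is built around.

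The fix is short and you should include it: split $P_0 v_h$ into the damping piece and the remainder $o_{L^2}(h)$. The remainder pairs against $\Op_h(b_0)^*v_h$ to give $o(h)$, as you say. For the damping piece, integrate by parts once (or equivalently use the $H^{-1}$ bound) and use the semiclassical calculus to bound the $H^1$-norm of $w:=\Op_h(b_0)^*v_h$. Since $h\nabla\,\Op_h(b_0)^*$ is again a bounded family of compactly supported semiclassical operators of order $0$, $\|\nabla w\|_{L^2}=O(h^{-1})$, so
\begin{equation*}
\bigl|\bigl(h\,\mathrm{div}(a\nabla v_h),\,w\bigr)_{L^2}\bigr|
= \bigl|\bigl(ha\nabla v_h,\nabla w\bigr)_{L^2}\bigr|
\leq \|ha\nabla v_h\|_{L^2}\,\|\nabla w\|_{L^2}
= o(h^{3/2})\cdot O(h^{-1}) = o(h^{1/2}) \to 0.
\end{equation*}
This is weaker than the $O(h)$ you claimed but is all that is needed, since the proposition only asks for the limit to vanish. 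Note this distinction is precisely what the remark following the two elliptic propositions is about: the damping term would be too large a source for \emph{propagation} (which requires $o_{H^{-1}}(h^2)+o_{L^2}(h)$), but it is harmless for the elliptic estimate because no gain of $h$ is required in the pairing. Your phrasing inverts this point, so it should be corrected.
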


while near the boundary (see e.g.~\cite[Appendice A.1]{BuLe01} in a slightly different context) we get
\begin{prop}
If   $a_{\partial}$ is equal to $0$ near $Z$ (i.e. $ a_{i}$ is supported in the elliptic region)
 then
\begin{equation}
\label{eq3.21bis}\lim_{k\rightarrow + \infty}\left(a_\partial( x',x_d, h_{k}D_{x'})v_{h_k}, v_{h_k}\right)_{ L^2}=0.
\end{equation} 
\end{prop}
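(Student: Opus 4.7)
The claim is the boundary analogue of semi-classical elliptic regularity for the approximate solution $v_h$ of an elliptic equation with Dirichlet data. I would follow the strategy in~\cite[Appendice A.1]{BuLe01}. The inputs are the two facts established just above the statement, namely
$$ (h^2\Delta + 1) v_h = o_{L^2}(h) + o_{H^{-1}}(h^{3/2}), \qquad v_h\mid_{\partial\Omega} = 0, $$
the boundary condition being inherited from $u_h\mid_{\partial\Omega} = 0$ via $v_h = \chi(a/h) u_h$.

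By a finite partition of unity and compactness of $\mathrm{supp}(a_\partial)$, I reduce to the case where $a_\partial$ is supported in a small conic neighborhood of a point $(x_0',\xi_0')\in \mathcal{E}$, i.e. $r_0(x_0',\xi_0') < -c_0 < 0$. Working in geodesic normal coordinates near $x_0$, write $P_{h,0} = h^2 D_{x_d}^2 - R(x_d, x', hD_{x'}) + hQ$. By continuity I shrink the support so that $-r(x_d, x', \xi') \geq c_0/2$ on $\mathrm{supp}(a_\partial)$ for $0\leq x_d \leq \delta$, making $P_{h,0}$ uniformly elliptic on this microlocal region (uniformly in $\xi_d\in\mathbb{R}$). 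Then I set $\tilde{b}(x_d,x',\xi') = -a_\partial/r$, a smooth tangential symbol, and $\tilde{B} = \mathrm{Op}_h(\tilde{b})$. The symbolic calculus gives $\mathrm{Op}_h(a_\partial) = -\tilde{B} R + h\,\mathrm{Op}_h(S^0)$, so
$$ \big(\mathrm{Op}_h(a_\partial) v_h, v_h\big)_{L^2} = \big(\tilde{B} h^2 D_{x_d}^2 v_h, v_h\big)_{L^2} - \big(\tilde{B} P_{h,0} v_h, v_h\big)_{L^2} + o(1). $$

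The term $(\tilde{B} P_{h,0} v_h, v_h)_{L^2}$ is $o(1)$ by duality, since $\|P_{h,0} v_h\|_{H^{-1}} = o(h)$ and $\|\tilde{B}^* v_h\|_{H^1_h} = O(h^{-1})$. For the remaining term I integrate by parts twice in $x_d$ using $v_h\mid_{x_d=0} = 0$; this converts it into $-\|(\tilde{B})^{1/2} h D_{x_d} v_h\|_{L^2}^2$ (modulo $O(h)$ commutators) plus a boundary contribution of the form $h\,(\sigma(\tilde{B}) h\partial_\nu v_h, h\partial_\nu v_h)_{L^2(\partial\Omega)}$. The boundary term is $O(h)=o(1)$ by the hidden regularity estimate $\|h\partial_\nu v_h\|_{L^2(\partial\Omega)} = O(1)$ of Lemma~\ref{cachee} (applied to $u_h$ and transferred to $v_h$ through $v_h = \chi(a/h)u_h$, using $|\nabla\chi(a/h)|\lesssim h^{-1}a^{1/2}$ together with Corollary~\ref{apriori}). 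The remaining $L^2$-type contribution is then closed by a tangential G\aa rding inequality in the elliptic region. The main obstacle I anticipate is the careful symbolic accounting for the composition $\tilde{B} R$ and the commutator $[h D_{x_d}, \tilde{B}]$: tangential symbols commute with $D_{x_d}$ only up to $O(h)$, and the second order normal factor $h^2 D_{x_d}^2$ produces contributions that must be controlled uniformly in $\xi_d\in\mathbb{R}$. This is precisely where the uniform lower bound $\xi_d^2 - r \geq c_0/2$ on the support of $\tilde{b}$ is crucial, and the scheme only works because $P_{h,0}$ is genuinely elliptic on the full microlocal region, not just tangentially.
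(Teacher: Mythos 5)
Your approach is essentially the standard elliptic parametrix argument, which is what the paper implicitly invokes by citing~\cite[Appendice A.1]{BuLe01}; the core ideas (microlocalizing to the tangential elliptic region $r<-c_0$, building $\tilde B$ from $-a_\partial/r$, substituting $R = h^2D_{x_d}^2 - P_{h,0} + hQ$, pairing the $P_{h,0}v_h$ term against $v_h$ by duality, integrating by parts in $x_d$ with the Dirichlet condition, and closing by tangential G\aa rding) are all correct. A few details need cleaning up, though none are structural.

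First, a sign: with $\tilde b = -a_\partial/r$ one has $\mathrm{Op}_h(a_\partial) = -\tilde B R + h\,\mathrm{Op}_h(S^0)$, so the decomposition should read $(\mathrm{Op}_h(a_\partial)v_h,v_h) = -(\tilde B h^2D_{x_d}^2 v_h, v_h) + (\tilde B P_{h,0}v_h, v_h) + o(1)$, i.e.\ your two signs are flipped. Second, your boundary term $h\,(\sigma(\tilde B)h\partial_\nu v_h, h\partial_\nu v_h)_{L^2(\partial)}$ does not actually arise: a single integration by parts in $x_d$ produces the boundary contribution $(\tilde B hD_{x_d}v_h)|_{x_d=0}\cdot\overline{v_h}|_{x_d=0}$, which vanishes identically because $v_h|_{x_d=0}=0$ and $\tilde B$ is tangential. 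So Lemma~\ref{cachee} is not needed at this point (it is used elsewhere in the paper, for Proposition~\ref{prop3.2}). Third, a small norm bookkeeping issue: for the duality step you want $\|\tilde B^* v_h\|_{H^1}=O(h^{-1})$ in the \emph{non}-semiclassical $H^1$ norm (the dual of the $H^{-1}$ in which $P_{h,0}v_h = o(h^{3/2})$); in $H^1_h$ the bound is $O(1)$. Either way the product is $o(1)$. Finally, the G\aa rding step deserves one more line: split $a_\partial = a_\partial^+ - a_\partial^-$ with $a_\partial^\pm \geq 0$ smooth and still supported in $\{r<-c_0\}$; then $\tilde b^\pm = a_\partial^\pm/(-r)\geq 0$, the identity reads $(\mathrm{Op}_h(a_\partial^\pm)v_h,v_h) = -(\tilde B^\pm hD_{x_d}v_h, hD_{x_d}v_h)+o(1)$, the left side is $\geq -Ch$ and the right side is $\leq Ch + o(1)$ by sharp G\aa rding, so each tends to $0$.
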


\begin{rema}
Note that if we regard the damping term $h\mathrm{div}(a\nabla(\chi_h u))=o_{H^{-1}}(h^{\frac{3}{2}})$ as a source term, we are not able to use the classical propagation theorem for $h^2\Delta+1$ as a black box, as such a strategy would require smaller r.h.s., namely $o_{H^{-1}}(h^2)+o_{L^2}(h)$. 
On the other hand, an integration by parts shows from Lemma~\ref{dampedregion} $$ \Bigl( \mathrm{div}(a\nabla(\chi_h u)), \chi_hu \Bigr) _{L^2} = -  \| a^{1/2} \nabla_x \chi_h u\| ^2_{L^2}= o(1),
$$ 
and this will ensure that in the propagation estimates such terms are invisible.  The key of our analysis in the sequel will be to systematically uses this procedure:  testing the damping term on expressions like $Q_h\chi_hu$,  doing the integration by part and then balancing $a^{\frac{1}{2}}$ to the other side. It is to perform this analysis that we need the condition $|\nabla a|\leq Ca^{\frac{1}{2}}$ to ensure the gain $O(h)$ from the commutator $[a^{\frac{1}{2}},Q_h]$. More precisely, we shall need the following lemma:
\end{rema}

\begin{lemma}\label{apriori2}
	Assume that $Q_h,B_{0,h},B_{1,h}$ are tangential $h$-pseudodifferential  operators of order $0$ and $B_h=B_{0,h}+B_{1,h}hD_{x_d}$, then
	$$ h^{-1}\big(Q_hM_hu,B_hu\big)_{L^2}=o(1),
	$$
	where $M_h=-h\mathrm{div} a\nabla $.
\end{lemma}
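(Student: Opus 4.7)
The plan is to prove the lemma by a direct estimate on $M_h = -h\,\div(a\nabla\cdot)$ that avoids any integration by parts, and hence sidesteps any boundary contribution (which would require sharpening Lemma~\ref{cachee}). The key input is that \eqref{Hypo1} implies $|\nabla a^{1/2}| = |\nabla a|/(2a^{1/2}) \leq C/2$ almost everywhere, so $a^{1/2}\in W^{1,\infty}(\overline{\Omega})$; by the standard Lipschitz commutator estimate, this gives $\|[a^{1/2}, T_h]\|_{\mathcal{L}(L^2)} = O(h)$ for every tangential semi-classical PDO $T_h$ of order $0$. This is the ``$O(h)$ gain'' announced in the remark above, and it will be used to shuttle $a^{1/2}$-weights between $u$ (and its derivatives) on one side and the test function $B_h u$ on the other.

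Writing
\begin{equation*}
h^{-1}(Q_h M_h u, B_h u)_{L^2} = -(Q_h(\nabla a \cdot \nabla u), B_h u)_{L^2} - (Q_h(a\Delta u), B_h u)_{L^2},
\end{equation*}
I will show each term is $o(1)$. The first one is immediate: \eqref{Hypo1} gives $\|\nabla a \cdot \nabla u\|_{L^2} \leq C\|a^{1/2}\nabla u\|_{L^2} = o(1)$ by Corollary~\ref{apriori}, which combined with $\|Q_h\|_{\mathcal{L}(L^2)} = O(1)$ and $\|B_h u\|_{L^2} = O(1)$ yields $o(1)$. For the second term, a direct bound on $\|a\Delta u\|_{L^2}$ gives only $o(h^{-1})$, which is insufficient. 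I instead factor $a = a^{1/2}\cdot a^{1/2}$ and move one $a^{1/2}$ across the pairing via the commutator,
\begin{equation*}
Q_h\bigl(a^{1/2}\cdot a^{1/2}\Delta u\bigr) = a^{1/2}Q_h(a^{1/2}\Delta u) + [Q_h, a^{1/2}](a^{1/2}\Delta u).
\end{equation*}
The commutator piece contributes $O(h)\cdot o(h^{-1}) \cdot O(1) = o(1)$ when paired with $B_h u$, while the main piece, after transposing the real-valued self-adjoint multiplication $a^{1/2}$, becomes $(Q_h(a^{1/2}\Delta u), a^{1/2} B_h u)_{L^2}$, bounded by $o(h^{-1})\cdot\|a^{1/2} B_h u\|_{L^2}$.

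The only remaining task is to show $\|a^{1/2} B_h u\|_{L^2} = O(h)$, after which the second term is $o(h^{-1})\cdot O(h) = o(1)$ and we are done. Splitting $B_h = B_{0,h} + B_{1,h} hD_{x_d}$ and using the same commutator trick (together with $[a^{1/2}, hD_{x_d}] = -ih(\partial_d a^{1/2})$, which is of $L^\infty$-norm $O(h)$), the main terms are $B_{0,h}(a^{1/2} u)$ and $B_{1,h}(a^{1/2} hD_{x_d} u)$, whose $L^2$-norms are $o(h)$ by Corollary~\ref{apriori}; the commutator remainders contribute $O(h)\cdot (\|u\|_{L^2}+\|h\nabla u\|_{L^2}) = O(h)$. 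I do not anticipate a serious obstacle: the care required is purely bookkeeping, matching the $o(h)$ decay of $a^{1/2}u$ and $a^{1/2}h\nabla u$ against the $O(h)$ commutator gain so that the large factor $\|a^{1/2}\Delta u\|_{L^2} = o(h^{-1})$ coming from the second derivative is absorbed.
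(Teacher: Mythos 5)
Your proof is correct and follows essentially the same route as the paper: split $h^{-1}M_h u = -(\nabla a\cdot\nabla u + a\Delta u)$, bound the first piece directly via $\|\nabla a\cdot\nabla u\|_{L^2}\leq C\|a^{1/2}\nabla u\|_{L^2}=o(1)$, and for the second piece factor $a=a^{1/2}\cdot a^{1/2}$, use the Lipschitz commutator bound $[Q_h,a^{1/2}]=O_{\mathcal{L}(L^2)}(h)$ (Corollary~\ref{Commutator:Lip}) to transfer one $a^{1/2}$ across the pairing, and match $\|a^{1/2}\Delta u\|_{L^2}=o(h^{-1})$ against $\|a^{1/2}B_h u\|_{L^2}=O(h)$ (which, up to a further $O(h)$ commutator, is the paper's $\|B_h a^{1/2}u\|_{L^2}=O(h)$ drawn from Corollary~\ref{apriori}). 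The paper's proof is exactly this, just written more tersely.
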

\begin{proof}
	Since $M_hu=-\nabla ah\nabla u-ah\Delta u$, from Corollary \ref{apriori}, we have
	$$ h^{-1}\big|\big(Q_h\nabla a h\nabla u, B_h u\big)_{L^2}\big|\leq C\|\nabla a\nabla u\|_{L^2}\|u\|_{H_h^1}=o(1).
	$$
	To estimate $\big(Q_ha\Delta u, B_hu\big)_{L^2}$, we write $Q_ha\Delta u=a^{\frac{1}{2}}Q_ha^{\frac{1}{2}}\Delta u+[Q_h,a^{\frac{1}{2}}]a^{\frac{1}{2}}\Delta u$. From Corollary \ref{apriori}, $a^{\frac{1}{2}}\Delta u=o_{L^2}(h^{-1})$. By Corollary \ref{Commutator:Lip}, $[Q_h,a^{\frac{1}{2}}], [B_h,a^{\frac{1}{2}}]=O_{\mathcal{L}(L^2)}(h)$. Therefore,
	$$ \big(Q_ha\Delta u, B_hu\big)_{L^2}=\big(Q_ha^{\frac{1}{2}}\Delta u, B_h a^{\frac{1}{2}}u\big)_{L^2}+o(1).
	$$
	Again from Corollary \ref{apriori}, we have $B_ha^{\frac{1}{2}}u=O_{L^2}(h)$, hence $(Q_ha\Delta u, B_hu)_{L^2}=o(1)$. The proof of Lemma \ref{apriori2} is complete.
\end{proof}  

\subsubsection{Definition of the measure}
The following results gives the existence of semi-classical measures.
\begin{prop}\label{prop:2.4}  Let $(v_{h_{k_p}})$ be a sequence bounded in $L^2( \Omega)$. There exists a subsequence $(k_{p})$ and a Radon positive measure $\mu $ on $Z$  such that 
\begin{equation} 
\forall  Q\in{\mathcal A}^{h_{k_p}}\quad \lim_{p\rightarrow \infty }(Qv_{h_{k_{p}}},v_{h_{k_{p}}})_{L^2}=\langle\mu ,\kappa(\sigma(Q))\rangle.
 \end{equation}
\end{prop}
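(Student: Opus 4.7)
The plan is to construct $\mu$ as the Riesz representative of a positive linear functional on $C^0_c(Z)$, obtained as a limit of the quadratic forms $Q \mapsto (Q v_{h_{k_p}}, v_{h_{k_p}})_{L^2}$ after diagonal extraction.

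First, I would exploit separability of the symbol space: pick a countable family $(Q_j) \subset \mathcal{A}^{h}$ whose symbols $\sigma(Q_j)$ are dense in the image $\{\sigma(Q): Q\in\mathcal{A}^h\}$ for a reasonable topology (e.g.\ uniform convergence on compact sets of $T^*\Omega \sqcup T^*\partial\Omega$). Since $\|v_{h_{k_p}}\|_{L^2}$ is bounded and each $Q_j$ is uniformly bounded on $L^2$ (by Calderón--Vaillancourt, with operator norm $\lesssim \|\sigma(Q_j)\|_{\infty} + O(h)$), the numerical sequences $(Q_j v_{h_{k_p}}, v_{h_{k_p}})_{L^2}$ are bounded. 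A diagonal extraction produces a subsequence, still indexed by $p$, along which all limits $\Lambda(Q_j) := \lim_p (Q_j v_{h_{k_p}}, v_{h_{k_p}})_{L^2}$ exist. Using the uniform operator bound, $\Lambda$ extends continuously to a linear functional on the closure of $\{\sigma(Q): Q\in\mathcal{A}^h\}$.

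Second, I establish positivity. If $\sigma(Q) \geq 0$, then the sharp G\aa rding inequality (applied piecewise to the interior and tangential pieces of the quantization in~\eqref{eq3.1bis}, and checking that the cross terms contribute only $O(h)$) gives $(Qu,u)_{L^2} \geq -Ch\|u\|^2_{L^2}$. Passing to the limit, $\Lambda(\sigma(Q)) \geq 0$. Hence $\Lambda$ is a positive linear functional on principal symbols.

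Third, I factor $\Lambda$ through $\kappa$. Suppose $\kappa(\sigma(Q)) \equiv 0$; writing $\sigma(Q) = a_i + a_\partial$ as in the decomposition of Section~\ref{sec.4}, this forces $a_i$ to vanish on $\mathrm{Car}(P_0)$ and $a_\partial$ to vanish on $\mathcal{H} \cup \mathcal{G}$, i.e.\ to be supported in the elliptic region $\mathcal{E}$. After cutting off with functions supported in small neighborhoods of $\mathrm{Car}(P_0)$ and of $\mathcal{H}\cup\mathcal{G}$ respectively, the two elliptic-regularity propositions just above then give $\Lambda(\sigma(Q)) = 0$. Consequently $\Lambda$ descends to a positive linear functional on the locally dense subset~\eqref{eq:10} of $C^0_c(Z)$. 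Extending by continuity and invoking Riesz representation produces a positive Radon measure $\mu$ on $Z$ such that $\Lambda(Q) = \langle \mu, \kappa(\sigma(Q))\rangle$ for every $Q \in \mathcal{A}^{h_{k_p}}$.

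The main obstacle is the positivity step at the boundary: the quantization~\eqref{eq3.1bis} mixes an interior pseudodifferential operator with a tangential one, and one must verify that the sharp G\aa rding inequality survives this mixed quantization with an $O(h)$ remainder. Once positivity holds, the fact that $\mu$ lives on $Z$ (rather than merely on a space parametrizing symbols) is a direct consequence of the two elliptic-regularity statements established just before the proposition.
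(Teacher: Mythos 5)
Your outline — diagonal extraction on a countable symbol family, positivity via a G\aa rding inequality, elliptic regularity to factor the functional through $\kappa$, and Riesz representation — is precisely the standard construction the paper points to, with essentially no proof of its own: the paper simply cites Lebeau~\cite{Le96} and the semi-classical constructions in~\cite{GeLe93-1, Bu97-1}. So the route is the right one.

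The one place where the description is misleading is the ``piecewise'' application of sharp G\aa rding. From $\sigma(Q)=a_i+a_\partial\geq 0$ you cannot infer $a_i\geq 0$ and $a_\partial\geq 0$ separately, and in general one cannot redecompose a non-negative $\sigma(Q)$ into a non-negative interior piece plus a non-negative tangential piece, because the tangential piece is constrained to be independent of $\xi_d$. The correct ingredient is a single G\aa rding inequality adapted to the mixed quantization~\eqref{eq3.1bis}, in which non-negativity of the full symbol $a_i(x,\xi)+a_\partial(x_d,x',\xi')$ is what is assumed; this is exactly the ``G\aa rding inequality for tangential operators'' from the cited references, and it is a genuine theorem, not a cross-term bookkeeping exercise on top of two separate applications of the usual sharp G\aa rding. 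You do flag positivity at the boundary as the main obstacle, so you are aware something nontrivial lives there, but as written the step would not go through. A related minor point: since positivity of the limiting measure must be read off from $\kappa(\sigma(Q))\geq 0$ on $Z$ rather than from $\sigma(Q)\geq 0$ globally, the cleanest order is to first factor $\Lambda$ through $\kappa$ (using the two elliptic-regularity propositions) and only then invoke G\aa rding, after choosing a preimage of a given non-negative $f\in C^0_c(Z)$ whose full symbol is non-negative.
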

The proof of this result relies on the G\aa rding inequality for tangential operators (see G. Lebeau~\cite{Le96} for a proof in the classical context and~\cite{GeLe93-1, Bu97-1} for the semi-classical construction). As before, we drop the indexes $k_p$ and denote by $(v_h)$ the extracted sequence.\par
\begin{prop}[First properties of the measure $\mu$]
\label{prop3.2}
We have
\begin{equation}
\label{eq3.27}\mu({\mathcal {H}})=0.
\end{equation}
Moreover, for any tangential symbol $b$, 
\begin{equation}
\label{eq3.27'}\limsup_{k\rightarrow + \infty}| \left(\Op_h(b) {h_{k}} D _{x_{d}} v_{h_k}, v_{h_k}\right)_{ L^2}|\leq C \sup_{\varrho\in \text{ supp}( b)}|r|^{ 1/2} |b| .
\end{equation}
\end{prop}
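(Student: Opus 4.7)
The plan is to prove both assertions by a tangential integration-by-parts argument in the normal coordinate $x_d$, combining the Dirichlet trace $v_h|_{\partial\Omega}=0$ (inherited from $u_h\in H_0^1$ since $\chi(a/h)$ is a bounded pointwise multiplier) with the approximate equation $(h^2\Delta+1)v_h = o_{L^2}(h)+o_{H^{-1}}(h^{3/2})$ derived just before the statement.

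For the inequality~\eqref{eq3.27'}, I would begin with Cauchy-Schwarz, $|(\Op_h(b)\,hD_{x_d}v_h, v_h)_{L^2}|^2 \le \|\Op_h(b)\,hD_{x_d}v_h\|_{L^2}^2\,\|v_h\|_{L^2}^2$, and rewrite $\|\Op_h(b)hD_{x_d}v_h\|^2 = (\Op_h(|b|^2)hD_{x_d}v_h, hD_{x_d}v_h)+O(h)$ by symbolic calculus. An integration by parts in $x_d$ on the right-hand side produces no boundary contribution, thanks to the tangentiality of $\Op_h(|b|^2)$ combined with $v_h|_{\partial\Omega}=0$, and yields $(h^2D_{x_d}^2v_h, \Op_h(|b|^2)v_h)+o(1)$. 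Substituting $h^2D_{x_d}^2 = R+P_{h,0}-hQ$ from~\eqref{3.22}, the $P_{h,0}v_h$ piece is $o(1)$ (pairing $o_{H^{-1}}(h^{3/2})$ against an $H^1_h$-bounded object), while the $hQ$ piece contributes only lower-order symbolic corrections, concentrated where the measure lives, i.e.\ on $\{\xi_d^2=r\}$. One is left with $\int r\,|b|^2\,d\mu \le \sup_{\mathrm{supp}(b)}(|r||b|^2)$, which after taking square roots gives~\eqref{eq3.27'}.

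For the vanishing $\mu(\mathcal{H})=0$, I would adapt the strategy of~\cite{BuLe01}. Fix a tangential symbol $b_\partial$ supported at a hyperbolic boundary point on which $r_0\ge c>0$, and extend it to $\tilde b(x_d,x',\xi')=b_\partial(x',\xi')\chi(x_d/\epsilon)$ with $\chi\in C_c^\infty$ equal to one near $0$; by continuity one may choose $\epsilon$ so small that $r\ge c/2$ throughout $\mathrm{supp}(\tilde b)$. There $R$ is tangentially elliptic, and the parametrix $\Op_h(\tilde b) = \Op_h(\tilde b/r)R + O_{L^2\to L^2}(h)$ is valid. Reinserting $R = h^2D_{x_d}^2+P_{h,0}+hQ$ and integrating by parts twice in $x_d$, all boundary terms vanish by the Dirichlet condition together with the tangentiality of the symbols, and the $P_{h,0}v_h$, $hQv_h$ contributions are absorbed as in the previous step, so that
\[
(\Op_h(\tilde b)v_h, v_h) = (\Op_h(\tilde b/r)\,hD_{x_d}v_h,\, hD_{x_d}v_h) + o(1).
\]
An application of~\eqref{eq3.27'} (suitably iterated in the second $hD_{x_d}$ slot, trading one factor of $\sqrt{r}$) bounds the right-hand side by $C\sup|\tilde b|$ times the $\mu$-mass of the thin $x_d$-layer $\mathrm{supp}\,\chi(\cdot/\epsilon)$. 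Passing to the limit $h\to 0$ and then $\epsilon\to 0$ extracts precisely the pure boundary mass of $\mu$ at $\mathcal{H}$; the thinning $\epsilon\to 0$ sends the bound to zero, whence $\mu(\mathcal{H})=0$.

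The main technical obstacle is the accounting of the damping contribution $M_hv_h$: we do not have $P_hv_h=o(h)$ in $L^2$, only $(h^2\Delta+1)v_h=o(h)+o(h^{3/2})_{H^{-1}}$, so each step must isolate and re-route $M_hv_h$ through pairings of the form $(Q_hM_hv_h, B_hv_h)$ with tangential $Q_h$ and $B_h$ of the type allowed by Lemma~\ref{apriori2}. Combined with the constant bookkeeping needed to verify that Dirichlet boundary contributions from each integration by parts do indeed vanish because of the tangentiality of the test symbols, this is where most of the effort lies.
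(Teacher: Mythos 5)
Your proof of inequality \eqref{eq3.27'} is essentially the paper's argument (Cauchy--Schwarz, integration by parts in $x_d$, and substitution of the equation for $h^2\partial_{x_d}^2$), modulo the minor point that the paper works directly with $u$ rather than $v$ and uses the exact equation together with Lemma~\ref{apriori2} to kill the damping term, whereas you use the approximate equation for $v$; your estimate $o(h^{3/2})_{H^{-1}}\cdot O(h^{-1})_{H^1} = o(h^{1/2}) = o(1)$ is legitimate there because no extra $h^{-1}$ factor appears.

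The proof of $\mu(\mathcal{H})=0$, however, has a genuine gap, and it is not the route the paper takes. After deriving
\[
(\Op_h(\tilde b)v_h, v_h) = (\Op_h(\tilde b/r)\,hD_{x_d}v_h,\, hD_{x_d}v_h) + o(1),
\]
you propose to bound the right-hand side by the ``$\mu$-mass of the thin $x_d$-layer'' and then send $\epsilon\to 0$. This is circular: the $\mu$-mass of the strip $\{0\le x_d\le 2\epsilon\}$ tends, as $\epsilon\to 0$, precisely to the boundary mass $\mu(\{x_d=0\})$ that you are trying to show vanishes --- it does not go to zero for free. Moreover, \eqref{eq3.27'} cannot be invoked as you describe: it estimates $(\Op_h(b)\,hD_{x_d}v, v)$, not $(\Op_h(b)\,hD_{x_d}v, hD_{x_d}v)$, and even if it applied, with $b=\tilde b/r$ the bound would be $\sup |r|^{1/2}|\tilde b/r| = \sup |\tilde b|/|r|^{1/2}$, which is bounded away from zero on $\mathcal{H}$ (where $r\ge c>0$) rather than small. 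More fundamentally, \eqref{eq3.27'} is useless on $\mathcal{H}$ because $r$ does not vanish there.

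The paper's proof of $\mu(\mathcal{H})=0$ is a genuinely different argument: near a hyperbolic point one factors $P_{h,0} = (hD_{x_d}-L_h^+)(hD_{x_d}-L_h^-) + O(h^\infty)$ with $\sigma(L_h^\pm)=\pm\sqrt r$, forms $u^\pm = \psi(x_d)Q_h^\pm (hD_{x_d}-L_h^\mp)u$ with $Q_h^\pm$ solving the transport equation $\partial_{x_d}q^\pm \mp \{l,q^\pm\}=0$, derives the first-order ODE \eqref{eq:hyperbolic} in $x_d$ for $u^\pm$ in which the damping contributes $h^{-1}(Q_h^\pm M_hu, u^\pm) = o(1)$ by Lemma~\ref{apriori2}, and then an energy estimate in $x_d$ followed by integration over $y_0\le\epsilon_0$ and the double limit $h\to 0$, $\epsilon_0\to 0$ yields $\langle\mu\mathbf 1_{x_d=0}, q_0\rangle = 0$. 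This transport-equation mechanism, which exploits that the two characteristic sheets $\xi_d = \pm\sqrt r$ cross the boundary transversally, is the essential ingredient your proposal is missing.
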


\begin{proof}
	The first property \eqref{eq3.27} follows from the fact that the trajectories near a hyperbolic point is transversal to the boundary. It follows from \cite{BuLe01}, with additional attention to the damping term $ih\mathrm{div} a\nabla v_h$. We factorize $P_{h,0}=-h^2\Delta-1$ as $(hD_{x_d}-L_h^{+})(hD_{x_d}-L_h^{-})+\mathcal{O}_{H^{\infty}}(h^{\infty})$ (see \cite[Lemme 6.1]{BuLe01}) near $\rho_0\in\mathcal{H}$ and choose $L_h^{\pm}$ with principal symbols $\pm l(x',{x_d},\xi')=\pm\sqrt{r(x',{x_d},\xi')}$.
	we denote by $q_0(x',\xi')\in C_c^{\infty}(\mathcal{H})$ and $q^{\pm}(y,x',\xi')$ solutions of 
	$$ \partial_{x_d}q^{\pm}\mp \{l,q^{\pm}\}=0,\quad q^{\pm}|_{{x_d}=0}=q_0.
	$$
	Denote by
	$$ u^{\pm}:=\psi({x_d})Q_h^{\pm}(hD_{x_d}-L_h^{\mp})u,
	$$
	where $\psi({x_d})\equiv 1$ if $0\leq x_d\leq \epsilon_0$.
	We have
	\begin{align*}
	(hD_{x_d}-L_h^{\pm})u^{\pm}=&\psi({x_d})[hD_{x_d}-L_h^{\pm},Q_h^{\pm}](hD_{x_d}-L_h^{\mp})u+Q_h^{\pm}f_h+\frac{h}{i}\psi'({x_d})Q_h^{\pm}(hD_{x_d}-L_{h}^{\mp})u\\+&iQ_h^{\pm}M_hu+O_{L^2}(h^{\infty})
	\end{align*} 
	where $M_h=h\mathrm{div} a\nabla $ and $f_h=o_{L^2}(h)$. By definition of $q^{\pm}$, the first term of r.h.s. is $O(h^2)$, hence 
	\begin{align}\label{eq:hyperbolic}
	(hD_{x_d}-L_h^{\pm})u^{\pm}=g_h^{\pm}-ih\psi'({x_d})Q_h^{\pm}(hD_{x_d}-L_h^{\mp})u+iQ_h^{\pm}M_hu,\quad g_h^{\pm}=o_{L^2}(h).
	\end{align}
	We have
	\begin{align*}
	&h\frac{d}{d{x_d}}(u^{\pm},u^{\pm})_{L^2(\partial)}= -2\Im\big(g_{h}^{\pm}-ih\psi'({x_d})Q_h^{\pm}(hD_{x_d}-L_h^{\mp})u +iQ_h^{\pm}M_hu,u^{\pm} \big)_{L^2(\partial)}\!\!+\!\!i\big((L_{h}^{\pm}-L_h^{\pm,*})u^{\pm},u^{\pm} \big)_{L^2(\partial)}.
	\end{align*}
	For $y_0\leq \epsilon_0$, we have
	\begin{align*}
	\|u^{\pm}(y_0)\|_{L^2(\partial)}^2\leq&\|u^{\pm}(0)\|_{L^2({x_d}=0)}^2+Ch^{-1}\|g_h^{\pm}\|_{L^2({x_d}\leq y_0)}\|u^{\pm}\|_{L^2({x_d}\leq y_0)} +C\|u^{\pm}\|_{L^2({x_d}\leq y_0)}^2\\
	+&Ch^{-1}\big|(Q_h^{\pm}M_hu,u^{\pm})_{L^2({x_d}\leq y_0)}\big|
	\end{align*} 
	The second line of r.h.s. is $o(1)$, due to Lemma \ref{apriori2}, and the first line of r.h.s. can be bounded by 
	$$\|u^{\pm}(0)\|_{L^2({x_d}=0)}^2+C\|u^{\pm}\|_{L^2({x_d}\leq y_0)}^2+o(1).$$
	 Integrating both sides over $y_0\leq \epsilon_0$, letting $h\rightarrow 0$ and then $\epsilon_0\rightarrow 0$, we deduce that $\langle\mu\mathbf{1}_{y=0},q_0\rangle=0$. This proves \eqref{eq3.27}. 
	
	For \eqref{eq3.27'}, it suffices to prove the inequality for $u$ instead of $v$. By Cauchy-Schwarz, 
	$$ \big|\big(\Op_h(b)h\partial_{x_d}u,u \big)_{L^2}\big|\leq \big|\big(\Op_h(b)^*\Op_h(b)h\partial_{x_d}u,h\partial_{x_d}u \big)_{L^2}\big|^{\frac{1}{2}}\|u\|_{L^2}.
	$$
	Doing integration by part, we have
	\begin{align*}
	\big(\Op_h(b)^*\Op_h(b)h\partial_{x_d}u,h\partial_{x_d}u\big)_{L^2}=-\big(\Op_h(b)^*\Op_h(b)h^2\partial_{x_d}^2u,u\big)_{L^2}+O(h).
	\end{align*}
	Replacing $h^2\partial_{x_d}^2u$ by equation
	$ h^2\partial_{x_d}^2u=-R_hu-iM_hu+O_{L^2}(h), 
	$ we deduce that
	$$ \big|\big(\Op_h(b)^*\Op_h(b)h^2\partial_{x_d}^2u,u\big)_{L^2}\big|\leq \big|\big(\Op_h(|b|^2)R_hu,u\big)_{L^2}\big|+O(h)+\big|\big(\Op_h(b)^*\Op_h(b)M_hu,u\big)_{L^2}\big|.
	$$
	From Lemma \ref{apriori2}, the third term of r.h.s. is $o(h)$. Passing $h\rightarrow 0$, we complete the proof of Proposition \ref{prop3.2}.
\end{proof}

\subsection{Invariance of the measure}
The key to prove the invariance of the measure will be to apply  the propagation theorem in ~\cite[Th\'eor\`eme 1]{BuLe01}. 
\begin{theo*}\label{BL} The two following properties are equivalent
\begin{enumerate}
\item The measure $\mu$ is invariant along the generalised flow.
\item The measure $\mu$ satisfies $\dot{\mu} =0$ and $\mu ( \mathcal{G}_2^+) =0$ in the sense that $\langle\mu,\{p,q \}\rangle=0$ holds for any even symbol $q\in C_c^{\infty}(\mathrm{Car}(P_0))$, i.e. $q(x',x_d=0,\xi',\xi_d)=q(x',x_d=0,\xi',-\xi_d)$.
\end{enumerate}
\end{theo*}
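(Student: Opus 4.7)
The plan is to prove the equivalence of flow invariance of $\mu$ and the infinitesimal/boundary conditions by treating the two implications separately, with the substantial work concentrated in (2)$\Rightarrow$(1).

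For (1)$\Rightarrow$(2): flow invariance gives $\int \kappa(b)\circ \Phi_s\,d\mu = \int \kappa(b)\,d\mu$ for all symbols $b$ whose image under $\kappa$ descends to $Z$. Differentiating at $s=0$ and using that $\Phi_s$ is generated by $H_p$ in the interior gives $\langle \mu, \{p,q\}\rangle = 0$ for any such symbol $q$. The evenness condition $q(x',0,\xi',\xi_d)=q(x',0,\xi',-\xi_d)$ is precisely what ensures that $q$ factors through $j$ to give a continuous function on $Z$, since over $\{x_d=0, v=0\}$ the preimage $j^{-1}(\rho)$ consists of two points with opposite signs of $\xi_d$. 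For the vanishing on $\mathcal{G}^{2,+}$: at a diffractive point $\rho_0$, the condition $r_1(\rho_0)>0$ forces the generalized bicharacteristic through $\rho_0$ to leave into $\{x_d>0\}$ immediately on both sides, so the orbit meets $\mathcal{G}^{2,+}$ only at the instant $s=0$; a flow-invariant measure is therefore incompatible with charging $\mathcal{G}^{2,+}$.

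For (2)$\Rightarrow$(1): fix $f \in \kappa(\sigma(\mathcal{A}^h))$ and study $F_s := \int f\circ \Phi_s\,d\mu$. By continuity of the generalized flow off $\mathcal{G}^{2,+}$ (Melrose--Sj\"ostrand) and the hypothesis $\mu(\mathcal{G}^{2,+})=0$, $F_s$ is continuous in $s$. To show $F_s$ is constant, stratify the support of $f$ into interior orbits, orbits that hit the boundary transversally at $\mathcal{H}$ and reflect, and gliding arcs on $\mathcal{G}^{-}$. On each stratum, construct via a local normal form for $\Phi_s$ an even primitive $q \in C_c^\infty(\mathrm{Car}(P_0))$ satisfying $H_p q = f + r$ with $r$ of lower order in the relevant filtration; the hypothesis $\langle \mu, \{p,q\}\rangle = 0$ then gives $\partial_s F_s\big|_{s=0}=0$. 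A translation and partition-of-unity argument promotes this to constancy of $F_s$ for all $s$, and density of $\kappa(\sigma(\mathcal{A}^h))$ in $C_c^0(Z)$ extends the conclusion to arbitrary continuous test functions, giving flow invariance of $\mu$.

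The main obstacle is the construction of an \emph{even} primitive $q$ at gliding points. Near $\rho_0 \in \mathcal{G}^{-}$ the generalized flow reduces to the boundary geodesic flow, but the test symbols must live on $\mathrm{Car}(P_0)\subset T^*\Omega$ and respect the evenness symmetry at $x_d=0$; solving $H_p q = f$ modulo lower-order terms while preserving this symmetry is done by iterating on the order of contact, which is controlled by the no-infinite-order-contact hypothesis $H_{r_0}^N(r_1)\neq 0$. A secondary issue is gluing the local contributions across boundary/interior transitions without producing concentrations on lower-dimensional subsets: the G\aa rding inequality underlying the construction of $\mu$ in Proposition~\ref{prop:2.4} rules this out, and the only genuinely obstructing stratum, $\mathcal{G}^{2,+}$, is removed by hypothesis.
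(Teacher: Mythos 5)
This statement is not proved in the paper at all: it is \cite[Th\'eor\`eme~1]{BuLe01}, cited as a black box. The paper's only work here is the remark immediately following, which reduces the semiclassical, time-independent situation to the time-dependent microlocal-defect-measure setting of \cite{BuLe01} by tensoring $\mu$ with $dt\otimes\delta_{\tau=1}$ and identifying the generalised flows, diffractive sets, and invariance statements on both sides. So there is no ``paper's own proof'' to compare against; you have attempted to reprove a result the authors invoke from the literature.

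On the substance of your sketch: the direction $(1)\Rightarrow(2)$ is essentially right. Differentiation of $s\mapsto\langle\mu,f\circ\Phi_s\rangle$ at $s=0$ gives the bracket condition, evenness at $x_d=0$ is exactly what makes a symbol descend through $j$ to a function on $Z$ (since $j^{-1}$ of a hyperbolic point consists of two points $\xi_d=\pm\sqrt{r}$), and $\mu(\mathcal G^{2,+})=0$ follows because each generalised bicharacteristic spends Lebesgue-measure-zero time in $\mathcal G^{2,+}$, which via Fubini kills the measure for any invariant $\mu$. But the direction $(2)\Rightarrow(1)$, which is the genuinely hard one, is not a proof. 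The central claim --- that on each stratum one can construct an \emph{even} primitive $q\in C^\infty_c(\mathrm{Car}(P_0))$ with $H_pq=f+(\text{lower order})$ via ``a local normal form for $\Phi_s$'' --- is unsupported and problematic: the Melrose--Sj\"ostrand flow is only continuous (not smooth, and in general not even Lipschitz in the tangential variables) near the glancing set, so there is no smooth normal form to differentiate against, and one should not expect a smooth even primitive of a generic $f$ near gliding or higher-order glancing points. Indeed this is precisely where the difficulty of~\cite{BuLe01} lies. The actual argument there does not go via primitives; it is a delicate measure-theoretic argument that exploits the structure theory of the generalised flow (transversal reflection at $\mathcal H$, immediate departure into the interior at $\mathcal G^{2,+}$, gliding dynamics on $\mathcal G^-$, and an induction over the strata $\mathcal G^k$ made finite by the no-infinite-order-contact hypothesis), combined with a careful choice of test symbols that respect the symmetry. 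Your appeal to the G\aa rding inequality to ``rule out concentrations on lower-dimensional subsets'' in the gluing step is also misplaced: G\aa rding gives positivity of $\mu$, not a mechanism for controlling how local constructions patch together. To turn your outline into an actual argument you would need to either reproduce the stratified analysis of \cite[Section~3.3]{BuLe01} or replace it with something of comparable strength; as written, the gliding-set step is a gap, not a proof.
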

\begin{rema}
Technically, Theorem~\ref{BL} is proved in~\cite{BuLe01}
for time dependent measures, i.e. measures depending in addition on two additional variables $(t, \tau)\in T^* \mathbb{R}$, and $p$ is replaced by $p- \tau^2$. However, it is easy to apply the results from~\cite{BuLe01} by considering the measure
\begin{equation}\label{particular} \widetilde{\mu} = \mu_{x, \xi} \otimes dt \otimes \delta_{\tau =1},
\end{equation}
which is supported in 
$\mathrm{Car}(-\Delta+ \partial_t^2)$
and satisfies $\dot{\widetilde{\mu}} =0$ in the sense that $\langle\mu,\{p- \tau^2,q \}\rangle=0$ holds for any even symbol $q\in C_c^{\infty}(\mathrm{Car}(P_0- \tau^2))$, i.e. $q(x',x_d=0,t, \xi',\xi_d, \tau )=q(x',x_d=0, t,\xi',-\xi_d, \tau)$. Remark that though we shall not use it, the measure $\widetilde{\mu}$ is, in the sense of~\cite[Section 2]{BuLe01}, the microlocal defect measure on the sequence $v_n(t,x) = h_n e^{ith_n^{-1}}u_n (x)$ (the pre-factor $h_n$ comes from the $H^1$ normalisation of the sequence $v_n$  in~\cite{BuLe01}).
Now, the generalised bicharacteristic flow for $p_0 - \tau^2$, $\Psi_s$ is given in terms of the generalised bicharacteristic flow for $p_0 $, $\psi_s$ by 
$$ \Psi_s (t, x, \tau=1 , \xi)= (t-2s, \tau =1, \psi_s(x, \xi)),$$
The set of diffractive points $\widetilde{\mathcal{G}}^{2,+} $ in the time dependent frame-work is given by 
$$ \widetilde{\mathcal{G}}^{2,+} = {\mathcal{G}}^{2,+}\times \mathbb{R} \times \{\tau =\pm 1\}
$$
and consequently, 
$$ \mu( {\mathcal{G}}^{2,+} )=0 \Leftrightarrow \widetilde{\mu} (\widetilde{\mathcal{G}}^{2,+} ) =0,$$
and in view of the particular form~\eqref{particular}, the invariance of $\widetilde{\mu}$ by $\Psi_s$ is equivalent to the invariance of $\mu$ by $\psi_s$.
\end{rema} 
Let us now  briefly explain the procedure we are going to follow.
\begin{itemize}
	\item First from Proposition \ref{prop:2.4} and the elllipticity (Proposition \ref{eq3.20}, Proposition \ref{eq3.21bis}), the measure $\mu$ is defined on $Z=j(\mathrm{Car}(P_0))$ by testing on symbols of the form $q=q_{i}+q_{\partial}, q_i\in C_c^{\infty}(T^*\Omega)$ and $q_{\partial}$ tangential (which is dense in $C_0(Z)$). 
	\item Using the fact $\mu(\mathcal{H})=0$ (Proposition \ref{prop3.2}), the measure $\mu$ can be extended to test on functions of $\mathrm{Car}(P_0)$ which admits a representation (thanks to Malgrange's theorem)
	$$ q(x',x_d,\xi',\xi_d)=q_0(x',x_d,\xi')+\xi_dq_1(x',x_d,\xi'),\; \text{ on } \xi_d^2=r(x',x_d,\xi').
	$$
	Then, we will show in Proposition \ref{extension} that for tangential $h$-pseudodifferential operators $B_{0,h}, B_{1,h}$, the quadratic form $$((B_{0,h_k}+B_{1,h_k}\frac{1}{i}h_k\partial_{x_d}))v_{h_k},v_{h_k})$$ converges to $\langle\mu,b_0+b_1\xi_d\mathbf{1}_{\rho\notin\mathcal{H}}\rangle$, by a suitable limit procedure for symbols in $\mathcal{A}^h$. Consequently, for any $q\in C_c^{\infty}(\mathrm{Car}(P_0))$, we can make sense of the expression
	$$ \langle\mu,\{p,q\}\rangle
	$$
	$\mu$-a.e., by viewing $\{p,q\}=2\xi_d\partial_{x_d}q\mathbf{1}_{\rho\notin\mathcal{H}}-\{r,q\}.$ We remark that to calculate $\{p,q\}$, it is enough to choose one representation $q=q_0+q_1\xi_d$ on $\mathrm{Car}(P_0)$, since $\{p,p\}=0$ and $p=0$ on supp$(\mu)$.
	\item Finally, to prove that the measure $\mu$ is invariant along the Melrose-Sj\"ostrand flow, we apply Theorem~\ref{BL}, for which we need to verify the following conditions:
	\begin{itemize}
		\item[(a)] $\mu(\mathcal{G}^{2,+})=0$
		\item[(b)] $\dot{\mu}=0$, in the sense that $\langle\mu,\{p,q \}\rangle=0$ holds for any even symbol $q\in C_c^{\infty}(\mathrm{Car}(P_0))$, i.e. $q(x',x_d=0,\xi',\xi_d)=q(x',x_d=0,\xi',-\xi_d)$. 
	\end{itemize}
\end{itemize}

The verification of (a),(b) in our context is based on the propagation formula: Proposition \ref{propagation:interior} and Proposition~\ref{propagation:boundary}. Especially, starting from Proposition \ref{propagation:boundary}, by choosing suitable test symbols of the form $q_0+q_1\xi_d$, we are able to verify the conditions (a) and (b).




\begin{prop}\label{extension} 
If $B_{0,h},B_{1,h}$ are two tangential $h$-pseudodifferential operators of with principal symbols $b_0,b_1$ of order $0$, then we have
\begin{align*}
\lim_{k\rightarrow\infty}((B_{0,h_k}+B_{1,h_k}\frac{1}{i}h_k\partial_{x_d})v_{h_k},v_{h_k} )_{L^2}=\langle\mu,b_0+b_1\xi_d\mathbf{1}_{\rho\notin\mathcal{H}} \rangle.
\end{align*} 
\end{prop}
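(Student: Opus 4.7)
The plan is to reduce to computing $\lim\bigl(B_{1,h_k}\tfrac{1}{i}h_k\partial_{x_d}v_{h_k},v_{h_k}\bigr)$ and to control this by a cut-off in the normal variable $x_d$: the interior piece will be handled directly by Proposition~\ref{prop:2.4}, and the boundary piece will be shown to give a vanishing contribution by the Cauchy--Schwarz argument already developed in the proof of~\eqref{eq3.27'}.

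The $B_{0,h}$-contribution is immediate since any tangential operator of order zero lies in $\mathcal{A}^h$ with principal symbol $b_0$, so Proposition~\ref{prop:2.4} gives $(B_{0,h_k}v_{h_k},v_{h_k})\to\langle\mu,b_0\rangle$. Now fix $\varepsilon>0$ and choose $\chi_\varepsilon(x_d)\in C^\infty(\mathbb{R})$ equal to $1$ for $x_d\geq\varepsilon$ and supported in $[\varepsilon/2,+\infty)$. For the interior piece $\chi_\varepsilon(x_d)B_{1,h}\tfrac{1}{i}h\partial_{x_d}$, the principal symbol $\chi_\varepsilon(x_d)b_1(x,\xi')\xi_d$ is compactly supported in $T^*\Omega$ (after a harmless truncation in $\xi_d$ outside a neighborhood of $\mathrm{Car}(P_0)$, whose contribution vanishes by~\eqref{eq3.20}), so Proposition~\ref{prop:2.4} gives convergence to $\int\chi_\varepsilon(x_d)\,b_1\xi_d\,d\mu$. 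Letting $\varepsilon\to 0$, dominated convergence yields the limit $\langle\mu,b_1\xi_d\mathbf{1}_{\rho\notin\mathcal{H}}\rangle$: the boundary part of $\mathrm{supp}(\mu)$ lies in $\mathcal{G}\cup\mathcal{H}$, where the integrand vanishes either because $\mu(\mathcal{H})=0$ (Proposition~\ref{prop3.2}) or because $\xi_d^2=r=0$ on $\mathcal{G}$.

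For the boundary piece $(1-\chi_\varepsilon(x_d))B_{1,h}\tfrac{1}{i}h\partial_{x_d}$, I would repeat the Cauchy--Schwarz argument of the proof of~\eqref{eq3.27'}: bound the square of its $L^2$-inner product against $v_h$ by
$$\bigl(\mathrm{Op}(|(1-\chi_\varepsilon)b_1|^2)h\partial_{x_d}v_h,h\partial_{x_d}v_h\bigr)\,\|v_h\|_{L^2}^2,$$
integrate by parts in $x_d$ (the boundary contribution vanishing thanks to $v_h|_{\partial\Omega}=0$), substitute $h^2\partial_{x_d}^2v_h=-R_hv_h-iM_hv_h+o_{L^2}(h)$ from the equation, and dispose of the damping contribution $M_hv_h=-h\,\mathrm{div}(a\nabla v_h)$ via Lemma~\ref{apriori2}. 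Passing to the limit yields the upper bound $\langle\mu,|(1-\chi_\varepsilon(x_d))b_1|^2 r\rangle$, which in turn tends to $0$ as $\varepsilon\to 0$ by dominated convergence: the support of $1-\chi_\varepsilon$ shrinks to $\{x_d=0\}$, where $\mu$ is concentrated on $\mathcal{G}\cup\mathcal{H}$ with $\mu(\mathcal{H})=0$ and $r=0$ on $\mathcal{G}$.

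The main obstacle, and effectively the only place where the Kelvin--Voigt damping creates a genuine new difficulty, is the treatment of the damping term $M_hv_h$ in the integration-by-parts step on the boundary piece; it is absorbed thanks to Lemma~\ref{apriori2}, whose proof relies crucially on the regularity hypothesis~\eqref{Hypo1}. Everything else is standard semi-classical measure theory, combined with the dichotomy between glancing points (where $\xi_d=0$) and hyperbolic points (where $\mu$ vanishes).
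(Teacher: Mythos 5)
Your proof is correct, and the overall strategy (decompose $B_{1,h}$ into interior and boundary pieces; identify the main term from the interior; kill the boundary piece using the concentration of $\mu$ on $\mathcal{G}$ at $x_d=0$) is the same as the paper's. The genuine difference is in the organization of the boundary estimate. The paper introduces two small parameters: after the $x_d$-cutoff at scale $\epsilon$, it further splits the boundary piece with a cutoff $\psi(r/\delta)$ in the glancing direction, treating the near-glancing part by the supremum bound~\eqref{eq3.27'} (which gives $O(\delta^{1/2})$) and the away-from-glancing part by a plain Cauchy--Schwarz ending with $\mu\mathbf{1}_{\mathcal{E}\cup\mathcal{H}}=0$. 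You dispense with the $\delta$-split entirely by observing that the \emph{proof} of~\eqref{eq3.27'} actually yields the sharper, measure-theoretic bound $\limsup_k|(\mathrm{Op}_h(b)h_kD_{x_d}v_{h_k},v_{h_k})|^2\leq\langle\mu,|b|^2 r\rangle\,\limsup_k\|v_{h_k}\|^2$ (not just $\sup_{\mathrm{supp}\,b}|r|^{1/2}|b|$), so that dominated convergence as $\varepsilon\to 0$ finishes directly using $\mu(\mathcal{H})=0$ and $r=0$ on $\mathcal{G}$. This is a legitimate streamlining; note, as you mention implicitly, that the stated form of~\eqref{eq3.27'} would \emph{not} suffice here, since $\sup_{\{x_d\leq\varepsilon\}}|r|^{1/2}|b_1|$ does not shrink with $\varepsilon$. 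One small imprecision: you substitute ``$h^2\partial_{x_d}^2v_h=-R_hv_h-iM_hv_h+o_{L^2}(h)$'' as if $v_h$ satisfied the original equation, whereas $P_hv_h$ carries an extra $o_{H^{-1}}(h^{3/2})$ error from the commutator with $\chi(a/h)$; this is harmless (pairing against $v_h\in O_{H^1}(h^{-1})$ gives $o(h^{1/2})$), but the paper avoids it by first replacing $v_h$ by $u_h$ at cost $o(1)$, via Lemma~\ref{dampedregion}, and you should do the same or justify the error explicitly.
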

\begin{proof}
Since $B_{0,h}$ and $B_{1,h}$ are all tangential, by the definition of the measure, the first term $(B_{0,h_k}v_{h_k},v_{h_k})_{L^2}$ converges to $\langle\mu,b_0\rangle$. It remains to prove the convergence of the second term $(B_{1,h_k}\frac{1}{i}h_k\partial_{x_d}v_{h_k},v_{h_k})_{L^2}$. For this, we pick $\epsilon>0, \delta>0$ and define
\begin{align*} &B_{1,h_k,\epsilon}=\big(1-\psi\big(\frac{x_d}{\epsilon}\big)\big)B_{1,h_k}\big(1-\psi\big(\frac{x_d}{2\epsilon}\big)\big),\quad B_{1,h_k}^{\epsilon}=B_{1,h_k}-B_{1,h_k,\epsilon},\\
& B_{1,h_k}^{\epsilon,\delta}=\Op_{h_k}\big(\psi\big(\frac{r}{\delta}\big)\big)B_{1,h_k}^{\epsilon},\quad B_{1,h_k,\delta}^{\epsilon}=B_{1,h_k}^{\epsilon}-B_{1,h_k}^{\epsilon,\delta},
\end{align*}
where $\psi$ is a cutoff function which is $1$ near $0$. Now by the definition of $\mu$ and the dominating convergence,
$$ \lim_{\epsilon\rightarrow 0}\lim_{k\rightarrow \infty}(B_{1,h_k,\epsilon}\frac{1}{i}h_k\partial_{x_d}v_{h_k},v_{h_k})_{L^2}=\langle\mu, b_1\xi_d\mathbf{1}_{x_d>0}\rangle=\langle\mu,b_1\xi_d\mathbf{1}_{\rho\notin\mathcal{H}}\rangle,
$$
since $\mu(\mathcal{E})=\mu(\mathcal{H})=0$. Now from Proposition \ref{prop3.2}, the contribution of 
$$ \lim_{\epsilon\rightarrow 0}\limsup_{k\rightarrow\infty}|(B_{1,h_k}^{\epsilon,\delta}h_k\partial_{x_d}v_{h_k},v_{h_k}  )_{L^2}|\leq C\delta^{\frac{1}{2}},
$$
which converges to $0$ if we let $\delta\rightarrow 0$. Finally, by Cauchy-Schwarz, 
$$ |(B_{1,h_k,\delta}^{\epsilon}h_k\partial_{x_d}v_{h_k},v_{h_k} )_{L^2}|\leq \|h_k\partial_{x_d}v_{h_k}\|_{L^2}\|
(B_{1,h_k}^{\epsilon,\delta})^*v_{h_k}
\|_{L^2}.
$$
Notice that 
\begin{align*}
&\lim_{k\rightarrow\infty}\|
(B_{1,h_k}^{\epsilon,\delta})^*v_{h_k}
\|_{L^2}^2=\lim_{k\rightarrow\infty}(B_{1,h_k}^{\epsilon,\delta}(B_{1,h_k}^{\epsilon,\delta})^*v_{h_k},v_{h_k})_{L^2}\\
=&\langle\mu,\big(1-\psi\big(\frac{r}{\delta}\big)\big)\big[\psi\big(\frac{x_d}{\epsilon}\big)\big(1-\psi\big(\frac{x_d}{2\epsilon}\big)\big)b_1+\big(1-\psi\big(\frac{x_d}{\epsilon}\big)\psi\big(\frac{x_d}{2\epsilon}\big) \big)b_1 \big] \rangle,
\end{align*}
taking the double limit $\limsup_{\delta\rightarrow 0}\limsup_{\epsilon\rightarrow 0}$, we obtain that
$$\limsup_{\delta\rightarrow 0}\limsup_{\epsilon\rightarrow 0}\lim_{k\rightarrow\infty}\|(B_{1,h_k}^{\epsilon,\delta})^*v_{h_k}\|_{L^2}^2\leq \langle\mu, b_1^2\mathbf{1}_{x_d=0}\mathbf{1}_{r\neq 0}\rangle=0,
$$
since $\mu\mathbf{1}_{\mathcal{E}\cup\mathcal{H}}=0$. This completes the proof of Proposition \ref{extension}.
\end{proof}


\section{Interior propagation estimate}\label{sec.5}

\begin{prop}[Interior propagation]\label{propagation:interior}
	Let $Q_h=\widetilde{\chi}Q_h\widetilde{\chi}$ be a $h$-pseudodifferential operator of order 0, where $\widetilde{\chi}\in C_c^{\infty}(\Omega)$, then we have
	$$ \frac{1}{ih}\big([h^2\Delta+1,Q_h]v_h,v_h \big)_{L^2}=o(1).
	$$	
\end{prop}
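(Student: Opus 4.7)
My plan is to first reduce from $v_h$ to $u_h$ by Lemma \ref{dampedregion}, then to expand the commutator using the equation $P_hu_h=f_h$, and finally to control the damping contributions via Lemma \ref{apriori2}.

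For the reduction, I write $u_h=v_h+w_h$. Lemma \ref{dampedregion} gives $\|w_h\|_{L^2}+\|h\nabla w_h\|_{L^2}=o(h^{1/2})$, while $[h^2\Delta+1,Q_h]=[h^2\Delta,Q_h]$ is a compactly supported semi-classical operator whose $L^2\to L^2$ and $H_h^1\to L^2$ norms are both $O(h)$. Expanding $v_h=u_h-w_h$ inside the pairing and bounding every cross term yields
\[
\bigl([h^2\Delta+1,Q_h]v_h,v_h\bigr)_{L^2}=\bigl([h^2\Delta+1,Q_h]u_h,u_h\bigr)_{L^2}+o(h).
\]

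Next, because $Q_h=\widetilde{\chi}Q_h\widetilde{\chi}$ with $\widetilde{\chi}\in C_c^\infty(\Omega)$, the function $Q_hu_h$ lies in $H_0^1(\Omega)$, so $h^2\Delta+1$ can be moved across $Q_h$ by integration by parts without a boundary contribution. Using the equation in the form $(h^2\Delta+1)u_h=iM_hu_h-f_h$ in both orders of the commutator produces
\[
\bigl([h^2\Delta+1,Q_h]u_h,u_h\bigr)_{L^2}=\bigl[(Q_hf_h,u_h)-(Q_hu_h,f_h)\bigr]-i\bigl[(Q_hu_h,M_hu_h)+(Q_hM_hu_h,u_h)\bigr].
\]
The first bracket is $o(h)$ because $\|f_h\|_{L^2}=o(h)$. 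For the second bracket I apply Lemma \ref{apriori2}, whose proof carries over verbatim to any compactly supported interior $h$-pseudodifferential operator of order $0$ (the only inputs are $L^2$-boundedness, the commutator bound $[Q_h,a^{1/2}]=O_{\mathcal{L}(L^2)}(h)$, and Corollary \ref{apriori}): taking $B_h=\mathrm{Id}$ gives $(Q_hM_hu_h,u_h)=o(h)$, and applying the same lemma to the interior operator $Q_h^*$ gives $(Q_h^*M_hu_h,u_h)=o(h)$, whose complex conjugate is $(Q_hu_h,M_hu_h)=o(h)$. Division by $ih$ yields the desired $o(1)$.

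The point, and would-be main obstacle, is precisely the one flagged in the remark preceding Lemma \ref{apriori2}: one cannot simply move $hM_hu_h$ to the right-hand side and apply the usual propagation theorem for $h^2\Delta+1$, since this term is only $o_{H^{-1}}(h^{3/2})$ and not small enough in $L^2$ to survive division by $h$. The workaround, embedded in Lemma \ref{apriori2}, is to keep the damping inside the pairing, expand $M_hu=-h\nabla a\cdot\nabla u-ha\Delta u$, and balance the half-powers of $a$ using the hypothesis $|\nabla a|\leq Ca^{1/2}$ together with the $a$-weighted a priori bounds of Corollary \ref{apriori}; this is the technical mechanism that makes the interior propagation go through.
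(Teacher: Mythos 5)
Your proof is correct, and it reaches the same conclusion through a slightly different organizational route than the paper's. The paper works directly with $v_h = \chi(a/h)u_h$ throughout: it writes $P_{h,0}v = \chi P_{h,0}u + [P_{h,0},\chi]u$, which produces a remainder $\mathcal{R}_1$ involving the (rapidly varying) $\chi$-commutator and a remainder $\mathcal{R}_2$ involving $\chi M_h u$, and it then disposes of both by hand via integration by parts and the $a^{1/2}$-balancing mechanism. You instead first replace $v_h$ by $u_h$ in the quadratic form at the cost of $o(h)$ — a legitimate step using $\|w_h\|_{H_h^1}=o(h^{1/2})$ from Lemma~\ref{dampedregion} and the $O(h)$ bound of $[h^2\Delta,Q_h]\colon H_h^1\to L^2$ — and then reduce everything to a single application of (an interior version of) Lemma~\ref{apriori2}. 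This collapses the paper's two Claims into the reduction plus one lemma invocation, which is cleaner, at the small price of having to observe (as you correctly do) that the proof of Lemma~\ref{apriori2} uses nothing specific to tangential operators: the only inputs are $L^2$-boundedness, Corollary~\ref{Commutator:Lip}, and the $a^{1/2}$-weighted a priori bounds of Corollary~\ref{apriori}, all of which hold for compactly supported interior pseudodifferential operators of order $0$. The essential mechanism — keeping the damping inside the pairing, expanding $M_h u = -h\nabla a\cdot\nabla u - ha\Delta u$, and transferring half-powers of $a$ with the Lipschitz commutator bound — is identical in both proofs.

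One small imprecision worth flagging: your assertion that $[h^2\Delta,Q_h]$ has $O(h)$ norm from $L^2\to L^2$ is not quite right for a general order-$0$ symbol, since the commutator's principal part is $h$ times an operator of order $1$ in $\xi$; what is correct (and what your cross-term estimates actually rely on) is the $O(h)$ bound from $H_h^1\to L^2$. In the paper's context the interior test symbols are taken in $C_c^\infty(T^*\Omega)$, so the $L^2\to L^2$ bound would also hold, but it is better to quote the $H_h^1\to L^2$ bound since that suffices and matches the generality of the Proposition's hypothesis.
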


\begin{proof}
Denote by $P_h=P_{h,0}+iM_{h}$ with $M_{h}=-h\div a\nabla$ and $P_{h,0}=-h^2\Delta-1$, we have
\begin{align*}
\frac{1}{ih}\big([P_{h,0},Q_h]v,v \big)_{L^2}=&\frac{1}{ih}\big(Q_hv,P_{h,0}v\big)_{L^2}-\frac{1}{ih}\big(P_{h,0}v,Q_h^*v\big)_{L^2}\\
=&\frac{1}{ih}\big(Q_hv,\chi P_{h,0}u\big)_{L^2}-\frac{1}{ih}\big(\chi P_{h,0}u,Q_h^*v\big)_{L^2} + \mathcal{R}_1 \\
\text{with }\mathcal{R}_1= \frac{1}{ih}&\big(Q_hv,[P_{h,0},\chi]u\big)_{L^2}-\frac{1}{ih}\big([P_{h,0},\chi]u,Q_h^*v \big)_{L^2}.
\end{align*}
By using the equation $P_{h,0}u=f-iM_{h}u$, we have
\begin{align}\label{eq:1-Prop2.7}
\frac{1}{ih}\big([P_{h,0},Q_h]v,v\big)_{L^2}=&\mathcal{R}_1+\mathcal{R}_2+o(1),
\end{align}
where
$$ \mathcal{R}_2=\frac{1}{h}\big(Q_hv,\chi M_{h}u \big)_{L^2}+\frac{1}{h}(\chi M_{h}u,Q_h^*v  )_{L^2}.
$$
Note that
$$ -[P_{h,0},\chi]=h\nabla\big(\nabla a \chi'(a/h ) \big)+2\nabla a \chi'(a/h)h\nabla,
$$
since $h\nabla(\chi(a/h) )=\nabla a\chi'(a/h )$.\\
\noindent
$\bullet$ {\bf Claim 1: $\mathcal{R}_1=o(1)$}\\
It suffices to show that $ih^{-1}\big(B_hv,[P_{h,0},\chi]u\big)_{L^2}=o(1)$ for any compact supported $h$-pseudo $B_h$ of degree 0. By integration by part, 
\begin{align*}
-\frac{1}{ih}\big(B_hv,[P_{h,0},\chi]u\big)_{L^2}=&-\frac{1}{ih}\big( B_h v , h \div ( \chi' ( \frac a h) \nabla a u )+ \chi' (\frac a h ) \nabla a h \nabla u  \big)_{L^2}
\end{align*}
and  we simply apply Corollary \ref{apriori}, to get for each term  $o(1)$. \noindent
\\
$\bullet$ {\bf Claim 2: $\mathcal{R}_2=o(1)$}\\
It suffices to prove that $(Q_hv,\chi\div(a\nabla u))_{L^2}=o(1)$. We write
\begin{align*}
\big(Q_hv,\chi\div a\nabla u\big)_{L^2}=&-\big((\nabla\chi)Q_hv,a\nabla u\big)_{L^2}-\big(\chi [\nabla,Q_h]v,a\nabla u\big)_{L^2}
-\big(\chi Q_h \nabla v,a\nabla u \big)_{L^2}.
\end{align*}
Since $|a^{\frac{1}{2}}\nabla\chi|=h^{-1}|a^{\frac{1}{2}}\chi'\nabla a|\leq C$, from Corollary \ref{apriori}, the first term of r.h.s. can be bounded by 
$$\|Q_hv\|_{L^2}\|a^{\frac{1}{2}}\nabla u\|_{L^2}=o(1).
$$
 The second term of r.h.s. can be bounded by $o(h)$. Observe that $\nabla(a^{\frac{1}{2}})=\frac{1}{2}a^{-\frac{1}{2}}\nabla a$ is bounded, thus from Corollary \ref{Commutator:Lip},
$$ [a^{\frac{1}{2}},Q_h]=O_{\mathcal{L}(L^2)}(h).
$$
Therefore,
$$\big|\big(\chi Q_h \nabla v,a\nabla u \big)_{L^2}\big| \leq \big|\big(\chi Q_h a^{\frac{1}{2}}\nabla v, a^{\frac{1}{2}}\nabla u \big)_{L^2}\big|+\big|\big(\chi [ a^{\frac{1}{2}},Q_h]\nabla v, a^{\frac{1}{2}}\nabla u \big)_{L^2}\big|.
$$
The second term is bounded by $Ch\|\nabla v\|_{L^2}\|a^{\frac{1}{2}}\nabla u\|_{L^2}=o(1)$, and the first term can be bounded by $o(1)$, due to Lemma \ref{dampedregion}. This completes the proof of Proposition \ref{propagation:interior}.
\end{proof}	

\section{Propagation near the boundary}\label{sec.7}

Recall that $v_h=\chi(a/h)u_h$. Consider the operator
$$ B_h=B_{0,h}+B_{1,h}\frac{h}{i}\partial_{x_d}
$$
where $B_{j,h}=\widetilde{\chi}_1\Op_h(b_j)\widetilde{\chi}_1$, $j=0,1$ are two tangential operators and  $\widetilde{\chi}_1$ has compact support near a point $z_0\in\partial\Omega$. Note that in the local coordinate system,
$$ P_{h,0}=-h^2\Delta-1=-\frac{1}{\sqrt{|g|}}h\partial_{x_d}\sqrt{|g|}h\partial_{x_d}-R_h,
$$
where $R_h$ is a self-adjoint tangential operator of order $2$. The operator involving the damping can be written as
$$ M_h=-\frac{h}{\sqrt{|g|}}\partial_{x_d}\sqrt{|g|}a\partial_{x_d} -\frac{h}{\sqrt{|g|}}\partial_{x'_k}\sqrt{|g|}ag^{jk}\partial_{x'_j}
$$

\begin{prop}[Boundary propagation]\label{propagation:boundary} 
$$ \frac{1}{ih}\big([P_{h,0},B_h]v,v\big)_{L^2}=\big(B_{1,h}|_{x_d=0}(h\partial_{x_d}v)|_{x_d=0},(h\partial_{x_d}v)|_{x_d=0} \big)_{L^2(\partial)}+o(1).
$$	
\end{prop}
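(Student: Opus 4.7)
The plan is to split
\[
\tfrac{1}{ih}([P_{h,0},B_h]v,v)_{L^2} = \tfrac{1}{ih}(P_{h,0}B_h v,v)_{L^2} - \tfrac{1}{ih}(B_h P_{h,0} v, v)_{L^2}
\]
and extract the boundary contribution via Green's identity. For the first piece I move $P_{h,0}$ to the right factor by integrating by parts twice; since $v|_{\partial \Omega} = 0$ (because $u \in H^1_0$ and $\chi(a/h)$ is a pointwise multiplier) and $B_{0,h}$ is tangential so that $(B_{0,h}v)|_{\partial \Omega} = 0$, only the normal-derivative piece of $B_h v$ survives. A direct computation (with outward normal $\partial_\nu = -\partial_{x_d}$) gives
\[
(P_{h,0} B_h v, v)_{L^2} - (B_h v, P_{h,0} v)_{L^2} = ih^3 \int_{\partial \Omega} B_{1,h}|_{x_d=0}\, \partial_{x_d} v \cdot \partial_{x_d}\bar v \, d\sigma,
\]
which, after division by $ih$, is exactly the right-hand side of the proposition. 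For the second piece, $(B_h P_{h,0}v, v)_{L^2} = (P_{h,0}v, B_h^* v)_{L^2}$: moving the tangential part $B_{0,h}$ produces no boundary contribution, and the boundary integral generated by moving $hD_{x_d}$ to the other side is proportional to $(P_{h,0}v)|_{x_d=0} \cdot v|_{x_d=0}$, which vanishes.

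It then remains to show that the bulk remainder
\[
\tfrac{1}{ih}\bigl[(B_h v, P_{h,0} v)_{L^2} - (P_{h,0}v, B_h^* v)_{L^2}\bigr]
\]
is $o(1)$. Using $P_h u = f = o_{L^2}(h)$ and the expansion $P_{h,0}v = \chi(a/h)\,f - i\chi(a/h) M_h u + [P_{h,0}, \chi(a/h)]u$, I follow the strategy of Claims 1 and 2 in the proof of Proposition~\ref{propagation:interior}. The $\chi f$ contribution is $o(1)$ since $B_h v = O_{L^2}(1)$ by Lemma~\ref{eenergy}(3); the commutator $[P_{h,0},\chi(a/h)]u$ is handled exactly as in Claim 1 there, using $|\nabla a|\leq C a^{1/2}$ together with Corollary~\ref{apriori}. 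For the damping contribution $\tfrac{1}{h}(\chi(a/h) M_h u, B_h v)_{L^2}$ (and its mirror term on the $B_h^* v$ side), I write $B_h v = \chi(a/h) B_h u + [B_h, \chi(a/h)]u$ and apply Lemma~\ref{apriori2} to the main piece (taking $Q_h = \chi(a/h)$, smoothly truncated to fit the tangential pseudodifferential framework); the commutator $[B_h, \chi(a/h)]$ is $O_{L^2}(h^{1/2})$, because on $\mathrm{supp}\,\chi'(a/h)$ one has $a \sim h$, hence $|\nabla\chi(a/h)| \lesssim h^{-1}|\nabla a| \lesssim h^{-1/2}$.

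The main obstacle is the damping term: one must pair $M_h u$ against an expression containing a normal derivative, with only an $h^{-1}$ weight available. The hypothesis $|\nabla a|\leq C a^{1/2}$ is used essentially twice — to close the estimate on $[P_{h,0},\chi(a/h)]u$, and, through Lemma~\ref{apriori2}, to recover a power of $h$ from the commutator $[a^{1/2},Q_h]$. A secondary point that requires care is verifying that moving $B_{1,h}\,hD_{x_d}$ to the adjoint side in the second term produces no additional boundary contribution; this is a direct consequence of $v|_{\partial\Omega}=0$, but without this cancellation the formula would have acquired a second boundary pairing involving $P_{h,0}v$.
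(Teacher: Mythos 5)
Your overall strategy is the same as the paper's: develop the commutator, integrate by parts to produce the boundary pairing (your Green's-identity bookkeeping and the vanishing of the $P_{h,0}v|_{x_d=0}\cdot v|_{x_d=0}$ term are both correct), and then estimate the bulk remainder by expanding $P_{h,0}v=\chi f-i\chi M_hu+[P_{h,0},\chi]u$ and treating the three pieces. The gap is in the damping piece. You split $B_hv=\chi B_hu+[B_h,\chi]u$ and bound $[B_h,\chi]=O_{\mathcal L(L^2)}(h^{1/2})$, which is correct, but that bound alone does not close the estimate: pairing $\tfrac1h\bigl(\chi M_hu,[B_h,\chi]u\bigr)$ with the crude bound $\|\chi M_hu\|_{L^2}=o(1)$ gives $\tfrac1h\cdot o(1)\cdot O(h^{1/2})=o(h^{-1/2})$, which diverges. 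To make your decomposition work you need an additional half power of $h$ from the $M_h$ factor, namely $\|\chi M_hu\|_{L^2}=o(h^{1/2})$, which follows from writing $\chi a\,h\Delta u=(\chi a^{1/2})\cdot a^{1/2}h\Delta u$ and using $\chi(a/h)a^{1/2}\lesssim h^{1/2}$ together with $\|a^{1/2}h\Delta u\|=o(1)$ (Corollary~\ref{apriori}); this is nowhere stated in your proof, and without it the commutator contribution is not controlled. The paper sidesteps this issue by never isolating $[B_h,\chi]$: it expands $h\partial_{x_d}(\chi u)=\chi'\partial_{x_d}a\,u+\chi h\partial_{x_d}u$, retains $\chi a h\Delta u$ intact, and distributes $a^{1/2}$ by commuting it past $B_{1,h}$ with the $O(h)$ commutator estimate of Corollary~\ref{Commutator:Lip}; both estimates are the same in substance, but the key step (gaining $h^{1/2}$ from $\chi a^{1/2}$ on the support of $\chi$) must be made explicit either way.

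Two smaller remarks. First, the $[P_{h,0},\chi]u$ term is \emph{not} handled ``exactly as in Claim~1'' of Proposition~\ref{propagation:interior}: there the test operator $Q_h$ has order zero, whereas here $B_h$ carries an extra $hD_{x_d}$, so after integrating by parts one is forced to control $\|h\nabla\partial_{x_d}u\|_{L^2}=O(h^{-1})$ and must invoke the $H^2$ a priori bound of Lemma~\ref{eenergy}(3), an ingredient absent from the interior case. Second, applying Lemma~\ref{apriori2} with $Q_h=\chi(a/h)$ (or $\chi^2$) is legitimate because that lemma's proof uses only $L^2$-boundedness of $Q_h$ and $[Q_h,a^{1/2}]=O(h)$, which here holds trivially (the commutator vanishes); you gesture at this with ``smoothly truncated,'' but it would be cleaner to say explicitly that the pseudodifferential hypothesis is not needed, since $\chi(a/h)$ is \emph{not} a uniformly bounded symbol in $S^0$ as $h\to 0$.
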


\begin{proof}
We give the proof in the case $B_{0,h} = 0$. The $B_{0,h}$ terms are handled
by slightly simpler versions of the same computations. By developing the commutator, we have
\begin{align*}
\frac{1}{ih}\big([P_{h,0},B_h]v,v\big)_{L^2}=&\frac{1}{ih}\big(B_hv,P_{h,0}v \big)_{L^2}-\frac{1}{ih}\big(B_hP_{h,0}v,v\big)_{L^2}+\big(B_{1,h}|_{x_d=0}(h\partial_{x_d}v)|_{y=0},h\partial_{x_d}v|_{x_d=0}\big)_{L^2(\partial)},
\end{align*}
where the boundary term (the third) comes from the integration by part of the term $$\frac{1}{ih}\big(\frac{1}{\sqrt{|g|}}h\partial_{x_d}\sqrt{|g|}h\partial_{x_d} v,v \big)_{L^2},$$ since $R_{h}$ is self-adjoint tangential operator. It suffices to show that
\begin{align}\label{o1}
\mathrm{I}_h:=\frac{1}{ih}\big(B_{1,h}h\partial_{x_d}v,P_{h,0}v \big)_{L^2}-\frac{1}{ih}\big(B_{1,h}h\partial_{x_d}P_{h,0}v,v\big)_{L^2}=o(1).
\end{align}
Since $v=\chi u$ and $P_{h,0}u=P_hu-iM_hu=f_h-iM_h u$, we have
$$ P_{h,0}v=\chi P_{h,0}u+[P_{h,0},\chi]u=\chi f_h-i\chi M_hu+[P_{h,0},\chi]u.
$$
Therefore,
$$ \mathrm{I}_h=o(1)+\mathrm{I}_{h,1}+\mathrm{I}_{h,2},
$$
where
$$\mathrm{I}_{h,1}=\frac{1}{ih}\big(B_{1,h}h\partial_{x_d} v, [P_{h,0},\chi]u\big)_{L^2}-\frac{1}{ih}\big(B_{1,h}h\partial_{x_d}[P_{h,0},\chi]u,v \big)_{L^2}
$$ 
and
$$ \mathrm{I}_{h,2}=\frac{1}{h}\big(B_{1,h}h\partial_{x_d} v, \chi M_hu \big)_{L^2}-\frac{1}{h}\big(B_{1,h}h\partial_{x_d} \chi M_{h}u,v  \big)_{L^2}
$$
\\
\noindent
$\bullet$ {\bf Claim 1: $I_{h,1}=o(1)$.}\\
Indeed, from integration by part, the second term $$ih^{-1}(B_{1,h}h\partial_{x_d}[P_{h,0},\chi]u,v)_{L^2}=ih^{-1}([P_{h,0},\chi]u,h\partial_{x_d}A_hv)_{L^2}$$ for some tangential operator $A_h$, hence it has the same structure as the first term. It suffices to show that
$$ h^{-1}\big(B_{1,h}h\partial_{x_d}v,[P_{h,0},\chi]u \big)_{L^2}=o(1).
$$ 
Since
$$ -[P_{h,0},\chi]u=h\nabla\cdot(\nabla a\chi'(a/h) )u+2\nabla a\chi'(a/h)\cdot h\nabla u,
$$
doing integration by part, we obtain that
\begin{align*}
 -h^{-1}\big(B_{1,h}h\partial_{x_d}v,[P_{h,0},\chi]u\big)_{L^2}=&-\big(\nabla(\overline{u}B_{1,h}h\partial_{x_d} v),\nabla a\chi' \big)_{L^2}+2h^{-1}\big(B_{1,h}h\partial_{x_d} v, \nabla a\chi' h\nabla u\big)_{L^2}\\
 =&-\big(\nabla B_{1,h}h\partial_{x_d} v,\nabla a\chi'  u\big)_{L^2}+h^{-1}\big(B_{1,h}h\partial_{x_d} v, \nabla a\chi' h\nabla u\big)_{L^2}.
\end{align*}
Note that $v=\chi u$, if one of the derivatives $h\partial_{x_d}$, $h\nabla $ fall on $\chi(a/h)$ we can bound them from Corollary \ref{apriori} by $o(h)$. If all the derivatives fall on $u$ in anyone of the two terms, by Lemma \ref{eenergy} and Corollary \ref{apriori}, these terms can be bounded by
		$$ \|h\nabla\partial_{x_d} u\|_{L^2}\|\nabla a u\|_{L^2}+h^{-1}\|h\partial_{x_d} u\|_{L^2}\|\nabla ah\nabla u\|_{L^2}=o(1).
		$$


\noindent
$\bullet$ {\bf Claim 2: $\mathrm{I}_{h,2}=o(1)$. }\\
It suffices to prove that
$$ h^{-1}\big(B_{1,h}h\partial_{x_d}(\chi u), \chi M_hu \big)_{L^2}=o(1). 
$$
Note that $-M_hu=\nabla ah \nabla u+ah\Delta u=o_{L^2}(1)$ and $h\partial_{x_d}(\chi u)=\partial_{x_d}a\chi'u+\chi h\partial_{x_d}u$. We have
$$ h^{-1}|\big(B_{1,h} \partial_{x_d} a\chi'u,\chi M_hu  \big)_{L^2}|\leq h^{-1} \|\nabla a u\|_{L^2}\|\chi M_hu\|_{L^2}=o(1),
$$
since $\|\nabla a  u\|_{L^2}=o(h)$ from Corollary \ref{apriori}. It remains to show that
$$ h^{-1}\big(B_{1,h}\chi h\partial_{x_d}u,\chi(\nabla a\cdot h\nabla u+ah\Delta u ) \big)_{L^2}=o(1).
$$
Since $\|\nabla a h\nabla u\|_{L^2}=o(h)$, we have $h^{-1}\big(B_{1,h}\chi h\partial_{x_d}u,\chi(\nabla a\cdot h\nabla u) \big)_{L^2}=o(1)$. Finally, we show that
$$ h^{-1}\big(B_{1,h}\chi h\partial_{x_d}u, \chi ah\Delta u \big)_{L^2}=o(1).
$$  Recall that from $|\nabla(a^{\frac{1}{2}})|\leq C$ and Corollary \ref{Commutator:Lip},
$$ [B_{1,h},a^{\frac{1}{2}}]=O_{\mathcal{L}(L^2)}(h),
$$
we have
\begin{align*}
h^{-1}|\big(B_{1,h}\chi h\partial_{x_d}u, \chi ah\Delta u \big)_{L^2}| \leq &h^{-1}|\big(B_{1,h}a^{\frac{1}{2}}\chi h\partial_{x_d}u, \chi a^{\frac{1}{2}}h\Delta u \big)_{L^2}|+h^{-1}|\big([B_{1,h},a^{\frac{1}{2}}]\chi h\partial_{x_d}u, \chi a^{\frac{1}{2}}h\Delta u \big)_{L^2}|\\
\leq & Ch^{-1}\|a^{\frac{1}{2}}h\nabla u\|_{L^2}\|a^{\frac{1}{2}}h\Delta u\|_{L^2}+C\|h\nabla u\|_{L^2}\|a^{\frac{1}{2}}h\Delta u\|_{L^2}=o(1).
\end{align*}
This completes the proof of Proposition \ref{propagation:boundary}.
\end{proof}


To show that the semi-classical measure $\mu$ of $(v_{h_k})$ is invariant along the Melrose-Sj\"ostrand flow (to complete the proof of Proposition \ref{resolvent-HFsemi}), we need to
verify the condition (2) in Theorem \ref{BL}. We will make use of the propagation formula, i.e. Proposition \ref{propagation:boundary}. Formally, for $B_h=B_{0,h}+B_{1,h}\frac{1}{i}h\partial_{x_d}$, the principal symbol of $\frac{i}{h}[P_{h,0},B_h]$ is given by $$\{\eta^2-r,b_0+b_1\xi_d \}=a_0+a_1\xi_d+a_2\xi_d^2,
$$
where
\begin{align}\label{relation}
a_0=b_1\partial_{x_d}r-\{r,b_0\}',\quad a_1=2\partial_{x_d}b_0-\{r,b_1\}',\quad a_2=2\partial_{x_d}b_1,
\end{align}
and $\{\cdot,\cdot\}'$ is the Poisson bracket for $(x',\xi')$ variables. On the other hand, by calculating the commutator, we find
\begin{align}\label{Malagrange}
\frac{i}{h}[P_{h,0},B_h]=A_0+A_1hD_{x_d}+A_2h^2D_{x_d}^2+h\Op_h(S_{\partial}^{0}+S_{\partial}^0\xi_d),
\end{align} 
where $A_0,A_1,A_2$ are tangential operators with symbols $a_0,a_1,a_2$, with respectively. We will prove the following propagation formula:
\begin{coro}\label{propagationformula}
	Assume that $B_h=B_{h,0}+B_{h,1}hD_{x_d}$, where $B_{h,0}, B_{h,1}$ are tangential operators of order 0 with symbols $b_0, b_1$, with respectively. Assume that $b=b_0+b_1\xi_d$. Define the formal Poisson bracket
	$$ \{p,b\}=(a_0+a_2r)+a_1\xi_d\mathbf{1}_{\rho\notin\mathcal{H}},
	$$	
	where $a_0,a_1,a_2$ are given by \eqref{relation}.
	Then the defect measure $\mu$ satisfies the equation
	$$ \langle\mu,\{p,b\}\rangle=-\langle\nu_{\partial},b_1 \rangle,
	$$
	where $\nu_{\partial}$ is the semiclassical measure of  $(h\partial_{x_d}v_{h}|_{x_d=0}).$
	Moreover, if $b$ is an even symbol (i.e. $b(x',x_d=0,\xi',\xi_d)=b(x',x_d=0,\xi',-\xi_d)$), then we have
	$$ \langle\mu,\{p,b\} \rangle=0.
	$$
	In particular, by combining Proposition~\ref{propagation:interior}, we have $\dot{\mu}=0$.
\end{coro}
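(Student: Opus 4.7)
The starting point is to combine \eqref{Malagrange} with Proposition \ref{propagation:boundary}. Since $\frac{1}{ih}=-\frac{i}{h}$, the latter rewrites as
\[
-\bigl((A_0+A_1hD_{x_d}+A_2h^2D_{x_d}^2)v_h,v_h\bigr)_{L^2}=\bigl(B_{1,h}|_{x_d=0}(h\partial_{x_d}v_h)|_{x_d=0},(h\partial_{x_d}v_h)|_{x_d=0}\bigr)_{L^2(\partial)}+o(1),
\]
the $h\,\Op_h(\cdot)$ remainder in \eqref{Malagrange} being $O(h)$ when tested against the bounded sequence $v_h$. I will pass to the limit in each piece. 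The two tangential terms are immediate: $(A_0v_h,v_h)\to\langle\mu,a_0\rangle$ by the defining property of $\mu$ (Proposition \ref{prop:2.4}), and $(A_1hD_{x_d}v_h,v_h)\to\langle\mu,a_1\xi_d\mathbf{1}_{\rho\notin\mathcal{H}}\rangle$ by Proposition \ref{extension}. The boundary quadratic form converges to $\langle\nu_\partial,b_1\rangle$, where $\nu_\partial$ is the semi-classical measure (extracted along a further subsequence) of the tangential trace $(h\partial_{x_d}v_h)|_{x_d=0}$, a bounded family in $L^2(\partial\Omega)$ by Lemma \ref{cachee}.

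The main obstacle is the treatment of $(A_2h^2D_{x_d}^2v_h,v_h)_{L^2}$, which is \emph{not} a tangential pairing. My plan is to use the local normal factorisation of the principal part to substitute
\[
h^2D_{x_d}^2v_h=-R_hv_h-P_{h,0}v_h+O_{L^2}(h),
\]
so that the term splits as $-(A_2R_hv_h,v_h)-(P_{h,0}v_h,A_2^*v_h)+o(1)$. The first piece is tangential, so $(A_2R_hv_h,v_h)\to\langle\mu,a_2 r\rangle$, in line with the relation $\xi_d^2=r$ on $\mathrm{supp}(\mu)$. For the second I expand $P_{h,0}v_h=\chi f_h-i\chi M_hu_h+[P_{h,0},\chi]u_h$. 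The $\chi f_h$ contribution is $o(h)$ since $\|f_h\|_{L^2}=o(h)$; the commutator piece $([P_{h,0},\chi]u_h,A_2^*v_h)$ is handled exactly as in Claim 1 of the proof of Proposition \ref{propagation:boundary}, using $|\nabla a|\lesssim a^{1/2}$ together with Corollary \ref{apriori}; and the damping pairing $(\chi M_hu_h,A_2^*v_h)$ is precisely of the form controlled by Lemma \ref{apriori2} (take tangential $Q_h=A_2^*$ and $B_h=\chi$), giving $o(h)$. This yields $(A_2h^2D_{x_d}^2v_h,v_h)\to\langle\mu,a_2 r\rangle$.

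Combining the four limits with the definition $\{p,b\}=(a_0+a_2 r)+a_1\xi_d\mathbf{1}_{\rho\notin\mathcal{H}}$ gives exactly
\[
\langle\mu,\{p,b\}\rangle=-\langle\nu_\partial,b_1\rangle.
\]
If $b=b_0+b_1\xi_d$ is even at $x_d=0$, the identity $b_0+b_1\xi_d=b_0-b_1\xi_d$ on $\{x_d=0\}$ forces $b_1|_{x_d=0}\equiv 0$, so the right-hand side vanishes and $\langle\mu,\{p,b\}\rangle=0$ for every such test symbol. Combined with the interior estimate of Proposition \ref{propagation:interior} (valid for symbols compactly supported inside $\Omega$), this delivers $\dot\mu=0$ on all of $Z$, completing the proof.
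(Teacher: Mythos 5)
Your overall strategy mirrors the paper's: start from Proposition \ref{propagation:boundary} together with the symbolic decomposition \eqref{Malagrange}, pass to the limit in the tangential pieces via Proposition \ref{prop:2.4} and Proposition \ref{extension}, and trade the normal second derivative for $R_h$ via the equation, controlling the damping contribution with the a priori estimates. The one genuine difference in route is that the paper first replaces $v_{h_k}$ by $u_{h_k}$ (Lemma \ref{dampedregion}) and then uses the equation for $u$, whereas you keep $v$ and expand $P_{h,0}v_h=\chi f_h-i\chi M_h u_h+[P_{h,0},\chi]u_h$ and control each piece; both are workable.

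There is however a sign error that breaks your chain of identities. From \eqref{3.22} the normal form is $P_{h,0}=h^2D_{x_d}^2-R_h+hQ_h$, so the correct substitution is
\begin{equation*}
h^2D_{x_d}^2 v_h = P_{h,0}v_h + R_h v_h + O_{L^2}(h),
\end{equation*}
not $h^2D_{x_d}^2v_h=-R_hv_h-P_{h,0}v_h+O_{L^2}(h)$ as you wrote. Tracing your signs through, the $A_2$ contribution becomes $-(A_2R_hv_h,v_h)-(P_{h,0}v_h,A_2^*v_h)+o(1)\to -\langle\mu,a_2 r\rangle$, and your ``combining the four limits'' step would yield $\langle\mu,a_0 - a_2 r + a_1\xi_d\mathbf{1}_{\rho\notin\mathcal{H}}\rangle=-\langle\nu_\partial,b_1\rangle$, which is \emph{not} $\langle\mu,\{p,b\}\rangle$ (recall $\{p,b\}=a_0+a_2 r+a_1\xi_d\mathbf{1}_{\rho\notin\mathcal{H}}$ after substituting $\xi_d^2=r$ on the characteristic set). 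The fix is simply to correct the sign in the substitution; the remainder of your analysis of $(P_{h,0}v_h,A_2^*v_h)$ then goes through as claimed. Two smaller remarks: the invocation of Lemma \ref{apriori2} ``with $Q_h=A_2^*$, $B_h=\chi$'' is imprecise, since $\chi=\chi(a/h)$ is not a tangential $h$-pseudodifferential operator of order $0$ (its symbol is not uniformly in $S^0$); what you actually use is that $\chi$ is an $L^\infty$ multiplier commuting with $a^{1/2}$, after which the argument of Lemma \ref{apriori2} applies verbatim. And the $L^2(\partial\Omega)$ bound for $(h\partial_{x_d}v_h)|_{x_d=0}$ follows from Lemma \ref{cachee} only after using the Dirichlet condition $u_h|_{\partial\Omega}=0$, so that $h\partial_{x_d}v_h|_{x_d=0}=\chi\, h\partial_{x_d}u_h|_{x_d=0}$.
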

\begin{proof}
	From Proposition \ref{propagation:boundary} and the decomposition \eqref{Malagrange}, we have
	\begin{align}
	\big( (A_0+A_1h_kD_{x_d}+A_2h_k^2D_{x_d}^2)v_{h_k},v_{h_k} \big)_{L^2}=-\langle\nu_{\partial},b_1 \rangle+o(1).
	\end{align}
	From Lemma \ref{dampedregion}, we can also replace the function $v_{h_k}$ on the l.h.s. by $u_{h_k}$. Using the equation of $u_{h_k}$:
	$$ (h^2D_{x_d}^2-R_{h_k})u_{h_k}=iM_{h_k}u_{h_k}-f_{h_k}+O_{L^2}(h_k),
	$$ 
	we deduce that
	$$ (A_2h_k^2D_{x_d}^2u_{h_k},u_{h_k})=(A_2R_{h_k}u_{h_k},u_{h_k})_{L^2}+o(1),
	$$
	thanks to Lemma \ref{apriori2}. Therefore, from Proposition \ref{extension},
	$$ \lim_{kh\rightarrow \infty}((A_0+A_1h_kD_{x_d}+A_2R_{h_k})u_{h_k},u_{h_k} )_{L^2}=\langle\mu,a_0+a_1\xi_d\mathbf{1}_{\rho\notin\mathcal{H}}+a_2r\rangle=\langle\mu,\{p,b\}\rangle.
	$$
Now if $b=b_0+b_1\xi_d$ is an even symbol, we must have $b_1|_{x_d=0}=0$, therefore, $\langle\mu,\{p,b\}\rangle=-\langle\nu_{\partial},b_1\rangle=0$. The proof of Lemma \ref{propagationformula} is complete.
\end{proof}
\begin{coro}\label{nochargingG2+}
	We have $\mu(\mathcal{G}^{2,+})=0$.
\end{coro}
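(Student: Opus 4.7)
The plan is to localize near an arbitrary diffractive point $\rho_0 = (x_0', 0, \xi_0', 0) \in \mathcal{G}^{2,+}$, show that $\mu$ has no mass in a small neighborhood of $\rho_0$ inside $\mathcal{G}^{2,+}$, and conclude by a countable covering. At $\rho_0$ one has $r(x_0', 0, \xi_0') = 0$ and $r_1 := \partial_{x_d} r|_{x_d=0}(x_0', \xi_0') > 0$; shrinking the neighborhood I may assume $\partial_{x_d} r \geq c > 0$ on it. I would apply Corollary~\ref{propagationformula} with the \emph{non-even} test symbol $b = b_1 \xi_d$ (so $b_0 = 0$), taking $b_1(x_d, x', \xi') = \phi(x_d/\epsilon)\psi(x', \xi')$ with $\phi \in C^\infty_c([0, \infty); [0, 1])$, $\phi(0) = 1$, and $\psi \geq 0$ a tangential cutoff near $(x_0', \xi_0')$ with $\psi(x_0', \xi_0') = 1$.

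From \eqref{relation}, $a_0 = b_1 \partial_{x_d} r$, $a_1 = -\phi(x_d/\epsilon)\{r,\psi\}'$, $a_2 = (2/\epsilon)\phi'(x_d/\epsilon)\psi$, so Corollary~\ref{propagationformula} reads
\begin{equation*}
\langle\mu, \phi(x_d/\epsilon)\psi\,\partial_{x_d} r\rangle + \langle\mu, \tfrac{2r}{\epsilon}\phi'(x_d/\epsilon)\psi\rangle - \langle\mu, \phi(x_d/\epsilon)\{r,\psi\}'\,\xi_d\,\mathbf{1}_{\rho\notin\mathcal{H}}\rangle = -\langle\nu_\partial, \psi\rangle.
\end{equation*}
Since $\nu_\partial \geq 0$, the right-hand side is $\leq 0$. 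As $\epsilon \to 0$ the first term concentrates on $\{x_d = 0\}$ and converges to $\int \psi\, r_1\, d\mu|_{x_d=0}$, its interior part vanishing by dominated convergence. By $\mu(\mathcal{H}) = 0$ (Proposition~\ref{prop3.2}) and elliptic regularity, $\mu|_{x_d=0}$ is supported in $\mathcal{G}$, hence on $\mathrm{supp}(\psi) \subset \{r_1 > 0\}$ in $\mathcal{G}^{2,+}$; combined with $r_1 \geq c$ this yields a lower bound $(c/2)\mu(\mathcal{G}^{2,+} \cap \{\psi \geq 1/2\})$. Provided the remainder terms vanish in the limit, the sign of the right-hand side forces $\mu(\mathcal{G}^{2,+} \cap \{\psi \geq 1/2\}) = 0$, and varying $\psi$ completes the argument.

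The main obstacle is to show that the second and third terms vanish as $\epsilon \to 0$. For the third, its interior contribution vanishes by dominated convergence (integrand bounded, pointwise $\to 0$ for $x_d > 0$), while on the boundary $\mu|_{x_d=0}$ is supported in $\mathcal{G}$ where $\xi_d \equiv 0$ in the compressed sense. The second term $\langle\mu, \tfrac{2r}{\epsilon}\phi'(x_d/\epsilon)\psi\rangle$ is the delicate one and the crux of the argument: one must show that $r$ is of order $\epsilon$ on $\mathrm{supp}(\mu) \cap \mathrm{supp}(\phi'(x_d/\epsilon)\psi)$. This uses $r_0 \leq 0$ on $\mathrm{supp}(\mu|_{x_d=0}) \cap \mathrm{supp}(\psi)$ (from $\mu(\mathcal{H}) = \mu(\mathcal{E}) = 0$, forcing $r_0 = 0$ there) together with a careful simultaneous shrinking of the tangential cutoff $\psi$ and $\epsilon$ to ensure $r = O(\epsilon)$ on the relevant support.
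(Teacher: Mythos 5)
Your overall strategy is the same as the paper's: test the propagation formula of Corollary~\ref{propagationformula} with $b = b_1\xi_d$, use that $\nu_\partial \geq 0$ to get a sign on the right-hand side, and use $\partial_{x_d}r > 0$ near $\mathcal{G}^{2,+}$ to get a sign on the leading term in the limit. The endgame is correct. However, the treatment of the term $\langle\mu, \tfrac{2r}{\epsilon}\phi'(x_d/\epsilon)\psi\rangle$ — which you correctly identify as the crux — has a real gap, and your proposed fix does not work.

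The issue is that with only the $x_d$-cutoff $\phi(x_d/\epsilon)$ and a \emph{fixed} tangential cutoff $\psi$, there is no uniform bound for $(2r/\epsilon)\phi'(x_d/\epsilon)\psi$: on the support of $\phi'(x_d/\epsilon)$ one has $x_d \sim \epsilon$, but $r(x_d, x', \xi')$ on $\mathrm{supp}(\psi)$ is of size $r_0(x', \xi') + O(\epsilon)$, and $r_0$ ranges over a fixed neighbourhood of $0$ on $\mathrm{supp}(\psi)$ — it is not $O(\epsilon)$. The support of $\mu$ is only constrained by $p = 0$, i.e.\ $r = \xi_d^2 \geq 0$, and nothing a priori prevents $\mu$ from charging regions where $x_d \sim \epsilon$ yet $r \sim 1$. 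Your proposed repair — shrinking $\psi$ to an $\epsilon$-neighbourhood of a fixed point $\rho_0$ — does make $r = O(\epsilon)$ on the relevant support, but it destroys the conclusion: as $\psi = \psi_\epsilon$ collapses to a point, the main term $\langle\mu, \phi(x_d/\epsilon)\psi_\epsilon\,\partial_{x_d}r\rangle$ collapses to (at most) the mass of $\mu$ at the single point $\rho_0$, so you would only prove that $\mu$ has no atom at $\rho_0$, which does not give $\mu(\mathcal{G}^{2,+}) = 0$ unless you have independent information on the structure of $\mathcal{G}^{2,+}$.

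What the paper does instead is insert an additional cutoff \emph{in the $r$-variable itself}, choosing
$$ b_{1,\epsilon}(x', x_d, \xi') = \psi\bigl(x_d/\epsilon^{1/2}\bigr)\,\psi\bigl(r(x_d, x', \xi')/\epsilon\bigr)\,\kappa(x_d, x', \xi'), $$
with $\kappa \geq 0$ a \emph{fixed} cutoff near $\rho_0 \in \mathcal{G}^{2,+}$. The factor $\psi(r/\epsilon)$ forces $|r| \lesssim \epsilon$ on the support by fiat, so every term in $a_2 r$ is bounded and tends to $0$ pointwise (the $x_d$-scale $\epsilon^{1/2}$ is chosen so the term arising from differentiating $\psi(x_d/\epsilon^{1/2})$ becomes $O(\epsilon^{1/2})$ after multiplication by $r$). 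Dominated convergence then gives
$$ \lim_{\epsilon\to 0}\langle\mu, \{p, b_\epsilon\}\rangle = \langle\mu, \kappa|_{x_d=0}\,\partial_{x_d}r\,\mathbf{1}_{r=0}\rangle \geq 0, $$
with $\kappa$ a genuine (non-shrinking) nonnegative bump near $\rho_0$, so comparison with $-\langle\nu_\partial, b_{1,\epsilon}\rangle \leq 0$ yields $\mu\mathbf{1}_{\mathcal{G}^{2,+}} = 0$ locally and hence globally. The $r$-cutoff is not a cosmetic device: it is what lets you keep the tangential localization $\kappa$ fixed while still controlling the singular factor $r/\epsilon$, and it is the ingredient your argument is missing.
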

\begin{proof}
	We will make use of the formula
	$ \langle\mu,\{p,b\}\rangle=-\langle\nu_{\partial},b_1\rangle
	$	
	by choosing $b=b_{1,\epsilon}\eta$ with
	$$ b_{1,\epsilon}(x',x_d,\xi')=\psi\big(\frac{x_d}{\epsilon^{\frac{1}{2}}}\big)\psi\big(\frac{r(x_d,x',\xi')}{\epsilon}\big)\kappa(x_d,x',\xi'),
	$$ 
	where $\psi\in C_c^{\infty}(\mathbb{R})$ equals to $1$ near the origin and $\kappa(y,x',\xi')\geq 0$ near a point $\rho_0\in\mathcal{G}^{2,+}$. Since $\{p,b_{\epsilon} \}=(a_{0}+a_2r)+a_1\xi_d\mathbf{1}_{\rho\notin\mathcal{H}}$, and $a_0,a_1,a_2$ are given by the relation \eqref{relation}. In particular for our choice, by direct calculation we have
	$$ a_0=b_{1,\epsilon}\partial_{x_d} r, \quad a_1=-\{r,\kappa\}'\psi\big(\frac{x_d}{\epsilon^{\frac{1}{2}}}\big)\psi\big(\frac{r}{\epsilon}\big),$$
	and
	$$ a_2=2\partial_{x_d}b_{1,\epsilon}=2\epsilon^{-\frac{1}{2}}\psi'\big(\frac{x_d}{\epsilon^{\frac{1}{2}}}\big)\psi\big(\frac{r}{\epsilon}\big)\kappa+2\frac{\partial_{x_d}r}{\epsilon}\psi\big(\frac{x_d}{\epsilon^{\frac{1}{2}}}\big)\psi'\big(\frac{r}{\epsilon}\big)\kappa+2\psi\big(\frac{x_d}{\epsilon^{\frac{1}{2}}}\big)\psi\big(\frac{r}{\epsilon}\big)\partial_{x_d}\kappa.
	$$
	Note that $a_2$ is uniformly bounded in $\epsilon$ and for any fixed $(y,x',\xi')$, $ra_2\rightarrow 0$ as $\epsilon\rightarrow 0$. Thus by dominating convergence, we have
	$$ \lim_{\epsilon\rightarrow 0}\langle\mu,\{p,b_{\epsilon} \}\rangle=\langle\mu,\kappa|_{x_d=0}\partial_{x_d}r\mathbf{1}_{r=0}\rangle\geq 0
	$$
	since $\partial_{x_d}r>0$ on $\mathcal{G}^{2,+}$. However,  $-\langle\nu_{\partial},b_{\epsilon}\rangle\leq 0$, we must have $\mu\mathbf{1}_{\mathcal{G}^{2,+}}=0$. This completes the proof of Lemma \ref{nochargingG2+}.
\end{proof}
From Lemma \ref{propagationformula} and Lemma \ref{nochargingG2+}, we have verified that $\dot{\mu}=0$ and $\mu(\mathcal{G}^{2,+})=0$, thus from Theorem \ref{BL}, the semi-classical $\mu$ is invariant along the Melrose-Sj\"ostrand flow. Thanks to the geometric control condition and the fact that $a^{\frac{1}{2}}v_{h_k}=o_{L^2}(1)$, we deduce that $\mu=0$. This contradicts to the assumption that $\|v_{h_k}\|_{L^2}=\|u_{h_k}\|_{L^2}=1+o(1)$, as $k\rightarrow\infty$. The proof of Proposition \ref{resolvent-HFsemi} is now complete.

\appendix
\renewcommand{\appendixname}{Appendix~\Alph{section}}

\section{Some commutator estimates}
\begin{lemma}\label{calculus} 
Assume that $b(x,y,\xi)\in L^{\infty}(\mathbb{R}_{x,y,\xi}^{3d})$ such that
$$ |\partial_{\xi}^{\alpha}b(x,y,\xi)|\lesssim_{\alpha} \langle\xi\rangle^{-(|\alpha|+1)}
$$
for all multi-index $\alpha\in\mathbb{N}^d, |\alpha|\leq d+1$. Then the operator $T_h$ associated with the Schwartz kernel
$$ K_h(x,y):=\frac{1}{(2\pi h)^d}\int_{\mathbb{R}^d}b(x,y,\xi)e^{\frac{i(x-y)\cdot\xi}{h}}d\xi 
$$ 
is bounded on $L^2(\mathbb{R}^d)$, uniformly in $0<h\leq 1$.
\end{lemma}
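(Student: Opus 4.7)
The plan is to apply Schur's test: it suffices to show that $\sup_x \int_{\mathbb{R}^d} |K_h(x,y)|\,dy \leq C$ and $\sup_y \int_{\mathbb{R}^d} |K_h(x,y)|\,dx \leq C$ uniformly in $h\in(0,1]$. Since the pointwise kernel bounds I will derive depend only on $|x-y|$ and $h$, the two conditions are equivalent.

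The main obstacle is that no single choice of the number of integrations by parts in $\xi$ can work simultaneously near the diagonal and at infinity: $d$ IBPs yield $|K_h(x,y)|\leq C|x-y|^{-d}$, which fails to be integrable near $y=x$ in every dimension, whereas $d+1$ IBPs yield $|K_h(x,y)|\leq C h|x-y|^{-(d+1)}$, which is better at infinity but even worse near the diagonal. The strategy is to dyadically localize the symbol in $\xi$ and optimize the number of IBPs scale by scale. Fix a Littlewood--Paley partition $1=\sum_{j\geq 0}\phi_j$ with $\phi_0$ supported in $\{|\xi|\leq 2\}$ and $\phi_j$ supported in $\{|\xi|\sim 2^j\}$ for $j\geq 1$, set $b_j=\phi_j b$. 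The hypothesis transfers: for $|\alpha|\leq d+1$,
\[
|\partial_\xi^\alpha b_j(x,y,\xi)|\leq C_\alpha\, 2^{-j(|\alpha|+1)}\,\mathbf{1}_{|\xi|\sim 2^j},
\]
and the support of $b_j$ in $\xi$ has volume $O(2^{jd})$.

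For the kernel $K_h^j$ of the operator with symbol $b_j$, integration by parts using the vector field $L=\frac{h}{i|x-y|^2}(x-y)\cdot\nabla_\xi$ (which preserves $e^{i(x-y)\cdot\xi/h}$) iterated $N$ times yields, for any $0\leq N\leq d+1$ and $x\neq y$,
\[
|K_h^j(x,y)|\leq C_N\, h^{-d}\Bigl(\frac{h}{|x-y|}\Bigr)^{\!N} 2^{j(d-1-N)}.
\]
Choosing $N=0$ when $2^j\leq h/|x-y|$ and $N=d+1$ otherwise, then summing the two geometric series in $j\geq 0$, produces
\[
|K_h(x,y)|\leq
\begin{cases}
C\,h^{-1}|x-y|^{-(d-1)} & \text{if } 0<|x-y|\leq h,\ d\geq 2,\\
C\,h^{-1}\log\!\bigl(2h/|x-y|\bigr) & \text{if } 0<|x-y|\leq h,\ d=1,\\
C\,h\,|x-y|^{-(d+1)} & \text{if } |x-y|>h.
\end{cases}
\]

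Each of these three bounds integrates to a uniform constant in $y$ (or in $x$) via polar coordinates: the near-diagonal contribution gives $C h^{-1}\int_0^h r^{(d-1)-(d-1)}\,dr=C$ (with the logarithmic analogue when $d=1$), and the far contribution gives $C h\int_h^\infty r^{-2}\,dr=C$. Schur's test then yields $\|T_h\|_{\mathcal{L}(L^2)}\leq C$ uniformly in $h$. The delicate point is precisely the frequency-by-frequency optimization of $N$; without the dyadic splitting one cannot simultaneously tame the diagonal singularity and secure decay at infinity.
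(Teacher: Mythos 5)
Your proof is correct and follows essentially the same route as the paper's: a Littlewood--Paley decomposition of the symbol in $\xi$, non-stationary phase (integration by parts up to order $d+1$) to get dyadic kernel bounds, and Schur's test. The only cosmetic difference is bookkeeping order: you first sum over $j$ to obtain a pointwise bound for the full kernel and then apply Schur's test once, while the paper applies Schur's test to each dyadic piece $T_{h,j}$ (obtaining $\|T_{h,j}\|_{\mathcal L(L^2)}\lesssim 2^{-jd/(d+1)}$) and then sums the operator norms.
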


\begin{proof}
Using the Littlewood-Paley decomposition, we can decompose the operator $T_h=\sum_{j\geq 0}T_{h,j}$ where each $T_{h,j}$ has the Schwartz kernel
$$ K_{h,j}(x,y)=\frac{1}{(2\pi h)^d}\int_{\mathbb{R}^d}b_j(x,y,\xi)e^{\frac{i(x-y)\cdot\xi}{h}}d\xi,
$$ 
with $b_j(x,y,\xi)=b(x,y,\xi)\psi_j(\xi)$ and $\psi_j(\xi)=\psi(2^{-j}\xi)$ for some $\psi\in C_c^{\infty}(\frac{1}{2}\leq |\xi|\leq 2 )$, if $j\geq 1$ and $\psi_0(\xi)$ is supported on $|\xi|\leq 1$.
Note that
$$ (x-y)^{\alpha}K_{h,j}(x,y)=\frac{i^{-|\alpha|}h^{|\alpha|}}{(2\pi h)^d}\int_{\mathbb{R}^d} D_{\xi}^{\alpha}b_j(x,y,\xi) \cdot e^{\frac{i(x-y)\cdot\xi}{h}}d\xi, 
$$
we have
$$ |K_{h,j}(x,y)|\lesssim_{\alpha}\frac{h^{|\alpha|-d}}{|x-y|^{|\alpha|}}\cdot 2^{-j(|\alpha|+1-d)}.
$$
We have another trivial bound
$ |K_{h,j}(x,y)|\lesssim_{\alpha} 2^{jd}h^{-d}. 
$
Therefore, for fixed $x\in\mathbb{R}^d$, by choosing $|\alpha|=d+1$, we have
\begin{align*}
&\int_{\mathbb{R}^d}|K_{h,j}(x,y)|dy\lesssim  \int_{\mathbb{R}^d}\min\big\{2^{-j}\frac{2^{-j}h}{|x-y|^{d+1}}, 2^{jd}h^{-d}\big\} dy\\
\leq & \int_{|z|\leq 2^{-\frac{j}{d+1}}\cdot 2^{-j}h} 2^{jd}h^{-d}dz+\int_{|z|>2^{-\frac{j}{d+1}}\cdot 2^{-j}h } \frac{2^{-j}2^{-j}h }{|z|^{d+1}}dz
\lesssim  2^{-\frac{jd}{d+1}}.
\end{align*}
Similarly,  for fixed $y\in\mathbb{R}^d$, 
$$ \int_{\mathbb{R}^d}|K_{h,j}(x,y)|dx\lesssim 2^{-\frac{jd}{d+1}}.
$$
By Schur's test, we have $\|T_{h,j}\|_{\mathcal{L}(L^2)}\lesssim 2^{-\frac{jd}{d+1}}$. Using the triangle inequality, we obtain that $T_h$ is bounded on $L^2(\mathbb{R}^d)$, uniformly in $0<h\leq 1$. The proof of Lemma \ref{calculus} is now complete.
\end{proof}

\begin{coro}\label{Commutator:Lip} 
Assume that $\kappa\in W^{1,\infty}(\mathbb{R}^d)$ and $b\in S^0(\mathbb{R}^{2d})$ is a symbol of order zero, then we have
$$   \|[\mathrm{Op}_h(b),\kappa]\|_{\mathcal{L}(L^2)}=O(h). 
$$
\end{coro}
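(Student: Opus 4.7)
The plan is to write the Schwartz kernel of the commutator explicitly, integrate by parts once in $\xi$ to extract a factor of $h$, and then appeal to Lemma~\ref{calculus} to handle the remaining operator — the only regularity in $(x,y)$ required by that lemma is $L^\infty$ boundedness, which exactly matches the $W^{1,\infty}$ hypothesis on $\kappa$.

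Concretely, first I would observe that $[\mathrm{Op}_h(b),\kappa]$ has Schwartz kernel
\begin{equation*}
K_h(x,y) = \bigl(\kappa(x)-\kappa(y)\bigr)\,\frac{1}{(2\pi h)^d}\int_{\mathbb{R}^d} b(x,\xi)\,e^{i(x-y)\cdot\xi/h}\,d\xi.
\end{equation*}
Since $\kappa\in W^{1,\infty}$, the fundamental theorem of calculus yields
\begin{equation*}
\kappa(x)-\kappa(y) = (x-y)\cdot \int_0^1 (\nabla\kappa)\bigl(y+t(x-y)\bigr)\,dt,
\end{equation*}
and the factor $(x-y)$ can be absorbed into the oscillation via $(x-y)\,e^{i(x-y)\cdot\xi/h} = \frac{h}{i}\nabla_\xi e^{i(x-y)\cdot\xi/h}$. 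Integrating by parts in $\xi$ (legitimate since $b\in S^0$ decays in $\xi$-derivatives), I obtain
\begin{equation*}
K_h(x,y) = h\cdot\widetilde{K}_h(x,y),\qquad \widetilde{K}_h(x,y) = \frac{1}{(2\pi h)^d}\int_{\mathbb{R}^d} \widetilde{b}(x,y,\xi)\,e^{i(x-y)\cdot\xi/h}\,d\xi,
\end{equation*}
where
\begin{equation*}
\widetilde{b}(x,y,\xi) = -\tfrac{1}{i}\Bigl(\int_0^1 (\nabla\kappa)(y+t(x-y))\,dt\Bigr)\cdot\nabla_\xi b(x,\xi).
\end{equation*}

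The next step is to verify the hypothesis of Lemma~\ref{calculus} for $\widetilde{b}$. The function $\nabla\kappa$ is only $L^\infty$ in $(x,y)$, but this is precisely the setting of Lemma~\ref{calculus}, which imposes no regularity in $(x,y)$. For the $\xi$-dependence, the symbol estimates $b\in S^0(\mathbb{R}^{2d})$ give
\begin{equation*}
|\partial_\xi^\alpha \widetilde{b}(x,y,\xi)| \lesssim \|\nabla\kappa\|_{L^\infty}\,\langle\xi\rangle^{-(|\alpha|+1)}
\end{equation*}
for every multi-index $\alpha$ (in particular for $|\alpha|\leq d+1$), which is exactly the hypothesis of Lemma~\ref{calculus}. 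I would then conclude that the operator with kernel $\widetilde{K}_h$ is bounded on $L^2(\mathbb{R}^d)$ uniformly in $0<h\leq 1$, whence $\|[\mathrm{Op}_h(b),\kappa]\|_{\mathcal{L}(L^2)}=O(h)$.

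The only point requiring any care is the integration by parts producing $\widetilde{b}$ — one needs $b\in S^0$ to control the boundary terms at $|\xi|=\infty$, and one should be slightly careful that the $y$-dependence introduced by $\nabla\kappa\bigl(y+t(x-y)\bigr)$ does not spoil the hypotheses of Lemma~\ref{calculus}; but that lemma is stated precisely so that $b$ may depend on both $x$ and $y$ without regularity, so no further argument is needed.
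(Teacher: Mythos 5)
Your proof is correct and takes essentially the same route as the paper: write the kernel of the commutator, pull out the Lipschitz factor $\kappa(x)-\kappa(y)=(x-y)\cdot\Psi(x,y)$ with $\Psi\in L^\infty$ (your explicit fundamental-theorem-of-calculus formula is just one realization of the $\Psi$ the paper invokes), trade $(x-y)$ for $h\nabla_\xi$ under the oscillatory integral, and apply Lemma~\ref{calculus} to the resulting amplitude $\Psi(x,y)\cdot\nabla_\xi b(x,\xi)$, which indeed satisfies $|\partial_\xi^\alpha(\cdot)|\lesssim\langle\xi\rangle^{-(|\alpha|+1)}$. The only blemish is a harmless sign: the kernel of $[\mathrm{Op}_h(b),\kappa]$ carries $\kappa(y)-\kappa(x)$, not $\kappa(x)-\kappa(y)$, but this of course does not affect the $L^2$ operator norm.
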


\begin{proof}
The kernel of $[\mathrm{Op}_h(b),\kappa]$ is given by
$$ K(x,y)=\frac{1}{(2\pi h)^d}\int_{\mathbb{R}^d}b(x,\xi)(\kappa(y)-\kappa(x))e^{\frac{i(x-y)\cdot\xi}{h}}d\xi.
$$
Since $\kappa\in W^{1,\infty}$, there exists $\Psi\in L^{\infty}(\mathbb{R}^d;\mathbb{R}^d)$ such that
$$ \kappa(y)-\kappa(x)=(y-x)\cdot \Psi(x,y).
$$
Thus
$$ K(x,y)=-\sum_{j=1}^d\frac{h}{i(2\pi h)^d}\int_{\mathbb{R}^d}\partial_{\xi_j}b(x,\xi)\Psi_j(x,y))e^{\frac{i(x-y)\xi}{h}}d\xi
$$
Applying Lemma \ref{calculus} to each $\partial_{\xi_j}b(x,\xi) \Psi_j(x,y)$, the proof of Corollary \ref{Commutator:Lip} is complete.
\end{proof}


\end{document}